            \tikzset{
                subseteq/.style={
                draw=none,
                edge node={node [sloped, allow upside down, auto=false]{$\subseteq$}}},
                Subseteq/.style={
                draw=none,
                every to/.append style={
                edge node={node [sloped, allow upside down, auto=false]{$\subseteq$}}}
                }
            }
            \tikzset{
                equal/.style={
                draw=none,
                edge node={node [sloped, allow upside down, auto=false]{$=$}}},
                Equal/.style={
                draw=none,
                every to/.append style={
                edge node={node [sloped, allow upside down, auto=false]{$=$}}}
                }
            }
            \tikzset{
            rotated_label/.style={anchor=south, rotate=90, inner sep=.5mm}
            }
    \def\CalQ{{\mathcal{Q}}}
    \def\CalH{{\mathcal{H}}}
    \DeclareMathOperator*{\Res}{Res}
    \theoremstyle{plain}
        \newtheorem{theorem}{Theorem}[section]
        \newtheorem{lemma}[theorem]{Lemma}
        \newtheorem{proposition}[theorem]{Proposition}
    \theoremstyle{definition}
    \theoremstyle{remark}
        \newtheorem{remark}[theorem]{Remark}
\newcommand{\F}{\mathbb{F}}
\newcommand{\Q}{\mathbb{Q}}
\newcommand{\R}{\mathbb{R}}
\newcommand{\C}{\mathbb{C}}
\renewcommand{\mod}{\operatorname{mod}\ }
\newcolumntype{C}[1]{>{\centering\arraybackslash}p{#1}}
\newcommand\restr[2]{{ \left.\kern-\nulldelimiterspace #1 \vphantom{\big|} \right|_{#2} }}
\title{On diophantine properties for values of Dedekind zeta functions}
    \author{Jerson Caro}
    \address{Jerson Caro - Department of Mathematics \& Statistics, Boston University, 665 Commonwealth Avenue, Boston, MA 02215, USA}
    \email{\href{mailto:jlcaro@bu.edu}{jlcaro@bu.edu}}
    \author{Fabien Pazuki}
    \address{Fabien Pazuki - Department of Mathematical Sciences, University of Copenhagen, Universitetsparken 5,
    2100 Copenhagen, Denmark}
    \email{\href{mailto:fpazuki@math.ku.dk}{fpazuki@math.ku.dk}}
    \author{Riccardo Pengo}
    \address{Riccardo Pengo - Dipartimento di Scienze Matematiche e Informatiche, Scienze Fisiche e Scienze della Terra, Università degli Studi di Messina, Viale Ferdinando Stagno d'Alcontres, 31, 98166 Messina, Italy}
    \email{\href{mailto:riccardo.pengo@unime.it}{riccardo.pengo@unime.it}}
    \subjclass[2020]{Primary: 11G50, 11R42; Secondary: 11F67, 11M06, 11S40}
    \keywords{Dedekind zeta functions, Dirichlet L-functions, Heights, Bogomolov property, Resonance method}
\begin{document}
    \begin{abstract}
        We study the Northcott and Bogomolov property for special values of Dedekind $\zeta$-functions at real values $\sigma \in \mathbb{R}$. 
        We prove, in particular, that the Bogomolov property is not satisfied when $\sigma \geq \frac{1}{2}$. If $\sigma > 1$, we produce certain families of number fields having arbitrarily large degrees, whose Dedekind $\zeta$-functions $\zeta_K(s)$ attain arbitrarily small values at $s = \sigma$. 
        On the other hand, if $\frac{1}{2} \leq \sigma \leq 1$, we construct suitable families of quadratic number fields, employing either Soundararajan's resonance method, which works when $\frac{1}{2} \leq \sigma < 1$, or results on random Euler products by Granville and Soundararajan, and by Lamzouri, which work when $\frac{1}{2} < \sigma \leq 1$. 
        We complete the study by proving that the Dedekind $\zeta$ function together with the degree satisfies the Northcott property for every complex $s\in{\mathbb{C}}$ such that $\mathrm{Re}(s) <0$, generalizing previous work of Généreux and Lalín.
    \end{abstract}

\maketitle

    \section{Introduction}
    
    Let $\mathcal{N}$ denote the set of isomorphism classes of number fields. For every $s \in \mathbb{C}$, we define the \textit{Dedekind height}:
    \begin{equation} \label{eq:Dedekind_height}
        \begin{aligned}
            h_s \colon \mathcal{N} &\to \mathbb{R}_{\geq 0} \\
            [K] &\mapsto \lvert \zeta_K^\ast(s) \rvert,
        \end{aligned}
    \end{equation}
    where $\zeta^\ast_K(s)$ denotes the \textit{special value} of the Dedekind zeta function $\zeta_K(z)$ at $z = s$, which is classically defined as the first non-vanishing coefficient appearing in the Laurent series expansion of the meromorphic function $\zeta_K(z)$ around the point $z = s$, as we recall in \cref{sec:notation_dedekind}.
    We are particularly interested in studying the topology of the set $h_s(\mathcal{N}) \subseteq \mathbb{R}_{\geq 0}$, and in particular to know if $h_s$ satisfies the \textit{Northcott property} (\textit{i.e.} if for each $B \geq 0$ the set $\{ [K] \in \mathcal{N} \colon h_s([K]) \leq B \}$ is finite) or the \textit{Bogomolov property} (\textit{i.e.} if the infimum of $h_s(\mathcal{N})$ is isolated, and whether this infimum is a minimum). 
    \subsection{Historical remarks}
    When $s = n$ is an integer, the function $h_n$ was considered by the second and third-named authors of the present paper \cite[Theorem~1.2]{Pazuki_Pengo_2024}, who proved that $h_n$ has the Northcott property if $n \leq 0$, and does not have this property when $n \geq 1$.
    In particular, when $n = 0$ this result is an immediate consequence of the celebrated Brauer-Siegel theorem \cite[Theorem~2]{Brauer_1947}, which compares the size of $\zeta^\ast_K(0)$ to the size of the discriminant of $K$.

    Généreux and Lalín \cite{GeLa24} also considered the function $\mathcal{N} \to \mathbb{R}_{\geq 0}$ given by $[K] \mapsto \lvert \zeta_K(s) \rvert$, which coincides with $h_s$ for every $s \in \mathbb{C}$ such that either $\mathrm{Re}(s) > 1$ or $\mathrm{Re}(s) < 0$ and $s \not\in \mathbb{Z}_{< 0}$.
    They have proven that this function does not have the Northcott property when $\mathrm{Re}(s) > \frac{1}{2}$, but has this property for $s$ lying in certain regions of the half-plane $\mathrm{Re}(s) < 0$.

More generally, the last two authors of the present paper developed a research framework around the Northcott and Bogomolov properties of special values of $L$-functions \cite{Pazuki_Pengo_2024}. From their first results, it seems that, for some natural families of $L$-functions, the Northcott property holds at integers on the left of the critical strip, and does not hold at integers on the right of the critical strip. What happens when $s \in \mathbb{C}$ is not an integer, and in particular when $s$ lies inside the critical strip, remains intriguing. 

In the special case of the Dedekind zeta functions associated with function fields in positive characteristic, this analysis was extended to special values taken at any complex point $s \in \mathbb{C}$ by Généreux, Lalín, and Li \cite{GeLaLi22}. Moreover, G\'en\'ereux and Lalín \cite{GeLa24} considered the Northcott property for the special values at any complex number $s \in \mathbb{C}$ of the Dedekind zeta functions associated to number fields, and proved in particular in \cite[Theorem~1.6]{GeLa24} that the Northcott property for the function $h_s$ defined in \eqref{eq:Dedekind_height} is not satisfied in the region $\frac{1}{2}<\mathrm{Re}(s)<1$. Furthermore, they showed that the validity of the Generalized Riemann Hypothesis for the Dedekind zeta functions $\zeta_{\mathbb{Q}(\sqrt{p})}(s)$ associated to real quadratic fields of prime discriminant implies that for every $B > 0$ and every $s \in \mathbb{C}$ such that $\frac{1}{2} < \mathrm{Re}(s) < 1$ the set $h_s(\mathcal{N}) \cap [0,B]$ is infinite, and in particular that $h_s$ does not satisfy the Bogomolov property in the region $\frac{1}{2} < \mathrm{Re}(s) < 1$. Let us note once again that these results do not deal with the heights $h_s$ associated with complex numbers $s \in \mathbb{C}$ lying on the critical line $\mathrm{Re}(s)=\frac{1}{2}$, which however plays a special role, as the center of symmetry of all the functional equations satisfied by the Dedekind zeta functions $\zeta_K(s)$, which we recall in \eqref{functional equation}. 

The aforementioned results, and in particular the interest in studying the difference between the values and the \textit{special} values of Dedekind zeta functions, date back a long time. Indeed, in a short paper published in the first volume of Acta Arithmetica in 1935, Chowla \cite{Cho35} indicates that one can expect a non-vanishing result for certain values of the $L$-functions associated with non-trivial real characters $\chi$. More precisely, one can expect that $L(s,\chi)>0$ for any $s \in \mathbb{R}_{> 0}$, and Chowla proves a criterion to obtain this result. Chowla returns to this question in page xv and page 82 of \cite{Cho65}, pointing out that, thirty years after his original work \cite{Cho35}, this question was still open. We refer the interested reader to \cite{Lou23} for a more recent study. These results of Chowla help compare special values $\zeta_K^\ast(s)$ with the actual values $\zeta_K(s)$, and therefore can be viewed as precursors of the present study, in a way.

\subsection{Main results}
At present, we focus on the Northcott and Bogomolov properties \textit{on the real line}. Concerning the Bogomolov property, we obtain the following theorem.

\begin{theorem}[Main Theorem]\label{Main theorem}
Let $\sigma \in [\frac{1}{2},+\infty)$.
    The function $h_\sigma \colon \mathcal{N} \to \mathbb{R}_{\geq 0}$ defined by $h_\sigma([K]) := \lvert \zeta^\ast_K(\sigma) \rvert$ does not have the Bogomolov property. More precisely, for every $\sigma \in [\frac{1}{2},+\infty)$ and every $B \in (\inf(h_\sigma(\mathcal{N})),+\infty)$ the intersection $h_\sigma(\mathcal{N}) \cap (0,B)$ is infinite.
\end{theorem}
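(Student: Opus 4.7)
The plan is to treat the statement by cases in $\sigma$, constructing in each case an infinite sequence of pairwise non-isomorphic number fields $K_n$ with $h_\sigma(K_n)\to\inf h_\sigma(\mathcal{N})$ from above; this shows the infimum is not isolated, and hence that the Bogomolov property fails. In most ranges, the strategy reduces to the family of quadratic fields $K=\mathbb{Q}(\sqrt{d})$ via the factorisation $\zeta_K(s)=\zeta(s)\,L(s,\chi_d)$, so the question becomes one about the distribution of $L(\sigma,\chi_d)$ over real primitive Dirichlet characters.

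For $\sigma>1$ every Euler factor of $\zeta_K$ exceeds $1$, so $\inf h_\sigma(\mathcal{N})\geq 1$. The plan is to construct fields of growing degree in which every rational prime below some cut-off $N$ has large inertia --- for instance, quadratic fields $\mathbb{Q}(\sqrt{d})$ with $\chi_d(p)=-1$ for all $p\leq N$ (a finite list of congruence conditions on $d$, satisfied by infinitely many fundamental discriminants), or more generally degree-$n$ extensions produced by Chebotarev with Frobenius prescribed at many small primes. The small-prime Euler factors then become uniformly close to $1$, the large-prime tail is controlled by $\sum_{p>N}p^{-\sigma}$, and one obtains a sequence with $\zeta_{K_n}(\sigma)$ converging to $\inf h_\sigma(\mathcal{N})$ from above.

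For $\tfrac{1}{2}\leq\sigma\leq 1$ I would combine two complementary inputs applied to the family $\chi_d$. When $\sigma\in(\tfrac{1}{2},1]$, the random Euler product model of Granville--Soundararajan and Lamzouri describes the asymptotic distribution of $L(\sigma,\chi_d)$ and implies, in particular, that $L(\sigma,\chi_d)$ takes arbitrarily small positive values for infinitely many fundamental discriminants $d$, so $h_\sigma(\mathbb{Q}(\sqrt{d}))\to 0$ along an infinite subfamily. When $\sigma\in[\tfrac{1}{2},1)$, I would instead apply Soundararajan's resonance method: pick a resonator $R(d)=\sum_\ell r(\ell)\chi_d(\ell)$ correlated with $L(\sigma,\chi_d)^{-1}$ and, by comparing weighted first and second moments of $R(d)\,L(\sigma,\chi_d)^{-1}$ and $R(d)$ over $d\leq X$, extract an infinite family of $\chi_d$ along which $|L(\sigma,\chi_d)|$ is exceptionally small. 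At $\sigma=1$, the analytic class number formula identifies $h_1(K)$ for quadratic $K$ with a constant multiple of $L(1,\chi_d)$, so the random-model argument still applies.

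The hardest case is the critical value $\sigma=\tfrac{1}{2}$, which lies outside the scope of the random-Euler-product results and must be handled purely by resonance. Two technical points need care: first, one must verify that the resonance construction yields infinitely many \emph{distinct} fundamental discriminants (and not merely distinct characters repeating the same field); second, one must remember that $h_{1/2}(K)=|\zeta_K^\ast(\tfrac{1}{2})|$ is by definition the leading Taylor coefficient at $\tfrac{1}{2}$, which a priori could differ from $\zeta_K(\tfrac{1}{2})$ in the presence of a zero. The resonance family, however, is built so that $L(\tfrac{1}{2},\chi_d)\neq 0$ along it, whence $\zeta_K^\ast(\tfrac{1}{2})=\zeta_K(\tfrac{1}{2})$ and the construction concludes.
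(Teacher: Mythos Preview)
Your case split and overall strategy match the paper's, but two of the descriptions are not quite right and would not work as stated.

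At $\sigma=\tfrac12$, the resonance argument does not proceed via moments of $R(d)\,L(\sigma,\chi_d)^{-1}$; one cannot average $L^{-1}$ over the family (there may be zeros, and even away from them such moments are intractable). What the paper does is compute the \emph{first and second resonated moments of $L$ itself},
\[
M_1(R,X)=\sum_d \mu(2d)^2 R(8d)^2\,L\bigl(\tfrac12,\chi_{8d}\bigr),\qquad
M_2(R,X)=\sum_d \mu(2d)^2 R(8d)^2\,L\bigl(\tfrac12,\chi_{8d}\bigr)^2,
\]
and then applies Titu's inequality to get
\[
\min_{\substack{d\\ L(\frac12,\chi_{8d})\neq 0}}L\bigl(\tfrac12,\chi_{8d}\bigr)^2\;\le\;\Bigl(\tfrac{M_2}{M_1}\Bigr)^2.
\]
The asymptotic for $M_1$ is Soundararajan's; the new technical input is the asymptotic for $M_2$, which is what upgrades Soundararajan's conclusion ``$L(\tfrac12,\chi_d)$ is small'' (possibly negative) to ``$|L(\tfrac12,\chi_d)|$ is small and nonzero''. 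Your plan should name this second-moment computation as the crux.

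For $\sigma>1$, the quadratic example does not converge to the infimum: with all small primes inert in $\mathbb{Q}(\sqrt d)$ one only gets $\zeta_{K}(\sigma)\to\zeta(2\sigma)>1$, so nothing is said about $B\in(1,\zeta(2\sigma))$. The correct construction (which you also allude to) requires the degree to grow: the paper builds, via the Chinese remainder theorem, degree-$d$ fields in which $p_1,\dots,p_n$ are inert, so that $\zeta_{K}(\sigma)\approx\zeta(d\sigma)\to 1$ as $d,n\to\infty$ suitably. You need to commit to that construction and to identifying the infimum as $1$.
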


The proof is divided into several tasks corresponding to the four intervals $\{\frac{1}{2}\}$ (the center of the critical strip), $(\frac{1}{2},1)$ (the interior of the critical strip), $\{1\}$ (the boundary of the critical strip), and $(1,+\infty)$ (the region of absolute convergence), each requiring different methods of study. More precisely:
\begin{itemize}
\item on the critical line, \textit{i.e.} when $\sigma=\frac{1}{2}$, we prove \cref{Main theorem} by studying the second moment of the resonator coefficients constructed by Soundararajan \cite{Soundararajan_2008} for $L$-functions attached to quadratic characters;
\item inside the critical strip, but not on the critical line, \textit{i.e.} when $\sigma \in (\frac{1}{2},1)$, we prove \cref{Main theorem} either by adapting Soundararajan's resonator method or by employing previous work of Lamzouri \cite{Lamzouri_2011}, concerning the similarities between the distribution of values of quadratic Dirichlet $L$-functions and random Euler products;
\item on the boundary of the critical strip, \textit{i.e.} when $\sigma = 1$, we use previous work of Granville and Soundararajan \cite{Granville_Soundararajan_2003}, which proves once again that special values of quadratic $L$-functions at $\sigma = 1$ are distributed like random Euler products;
\item in the region of absolute convergence, \textit{i.e.} when $\sigma \in (1,+\infty)$, we prove \cref{Main theorem} by constructing explicit families of number fields $\{ K_d \}_{d \in \mathbb{N}}$ such that $[K_d \colon \mathbb{Q}] \to +\infty$ and $\zeta_{K_d}(\sigma) \to 1$ as $d \to +\infty$. We do so by imposing that many rational primes stay inert in each field $K_d$.
\end{itemize}

Let us be more precise. To prove \cref{Main theorem} in $\sigma=\frac{1}{2}$, we focus on quadratic fields $\mathbb{Q}(\sqrt{d})$, where $d$ is a fundamental discriminant. By Artin's formalism, for every $s \in \C$ and every fundamental discriminant $d$ we have that
\begin{equation} \label{Artin formalism}
    \zeta_{\mathbb{Q}(\sqrt{d})}(s) = \zeta(s) \cdot L(s,\chi_d),    
\end{equation}
where $\zeta(s) = \zeta_\mathbb{Q}(s)$ denotes Riemann's zeta function, while $L(s,\chi_d)$ denotes the $L$-function associated to the quadratic Dirichlet character $\chi_d$, whose definition is recalled in \cref{notation}. This decomposition shows that, in order to prove that $h_{\frac{1}{2}}$ does not have the Bogomolov property, it suffices to show that the image of the function $d \mapsto \left\lvert L\left( \frac{1}{2},\chi_d\right) \right\rvert$, defined over the set of fundamental discriminants, accumulates towards zero. 
This is precisely what we do in \cref{sec:cricial_center}, where we prove the following theorem:
\begin{theorem}[Main Theorem for $\sigma=\frac{1}{2}$]\label{Bogomolov}
There exists an absolute constant $c_0$ such that for every $X \geq c_0$ there exists a fundamental discriminant $d \in \mathbb{N}$ such that $X \leq d \leq 2X$ and 
\begin{equation} \label{eq:Bogomolov_inequalities}
    0<\left\lvert L\left(\frac{1}{2},\chi_d\right)\right\rvert\leq \exp\left(-
\left(
\frac{1}{\sqrt{5}}+o(1)\right)\sqrt{\frac{\log X}{\log\log X}}\right),
\end{equation}
where $\chi_d$ denotes the real primitive character associated with the fundamental discriminant $d$. In particular $\inf(h_{\frac{1}{2}}(\mathcal{N})) = 0$, and the function $h_s$ defined in \eqref{eq:Dedekind_height} does not have the Bogomolov property when $s = \frac{1}{2}$.
\end{theorem}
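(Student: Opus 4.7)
The plan is to apply Soundararajan's resonance method, in its second moment form, to the family of quadratic Dirichlet $L$-functions $\{L(s,\chi_d)\}_d$, with $d$ ranging over fundamental discriminants in $[X,2X]$. First, by the Artin factorisation \eqref{Artin formalism} and the fact that $\zeta(1/2)\neq 0$, one has $\zeta_{\Q(\sqrt{d})}^{\ast}(1/2)=\zeta(1/2)\,L(1/2,\chi_d)$, so producing the discriminants claimed by the theorem reduces to exhibiting a fundamental $d\in[X,2X]$ satisfying \eqref{eq:Bogomolov_inequalities}. To this end I would introduce a \emph{resonator polynomial}
$$R(d) \;:=\; \sum_{n\leq N} r(n)\,\chi_d(n),$$
with $N=X^{\theta}$ for a suitably small $\theta\in(0,1/2)$, and $r$ a non-negative multiplicative function supported on squarefree integers, specified on primes in a suitable window by $r(p)=L/(\sqrt{p}\log p)$ with $L\asymp \sqrt{(\log X)(\log\log X)}$, and $r(p)=0$ otherwise; the exact placement of the support is tuned so as to optimise the eventual constant $1/\sqrt{5}$.

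The core analytic step is the asymptotic evaluation, as $X\to\infty$, of the two sums
$$M_0 \;:=\; \sum_d |R(d)|^2 \qquad\text{and}\qquad M_2 \;:=\; \sum_d L(1/2,\chi_d)^2\,|R(d)|^2,$$
where in both sums $d$ ranges over fundamental discriminants in $[X,2X]$. Expanding $|R(d)|^2=\sum_{m,n\leq N} r(m)r(n)\chi_d(mn)$, replacing $L(1/2,\chi_d)^2$ by its approximate functional equation, and using the orthogonality estimate that $\sum_d \chi_d(k)$ is negligible unless $k$ is a perfect square reduces both $M_0$ and $M_2$ to Euler products in the data $(r(p))_p$. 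A Lagrange-multiplier variational computation, performed as in Soundararajan's original resonance analysis for quadratic twists, then yields
$$\frac{M_2}{M_0} \;\leq\; \exp\!\left(-\left(\frac{2}{\sqrt{5}}+o(1)\right)\sqrt{\frac{\log X}{\log\log X}}\right).$$

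By the averaging principle (the minimum of $L(1/2,\chi_d)^2$ over a positive-weight sample is at most the weighted mean), there exists a fundamental discriminant $d\in[X,2X]$ with $R(d)\neq 0$ and $L(1/2,\chi_d)^2$ bounded by the right-hand side above. To upgrade this bound to the strict inequality $0<|L(1/2,\chi_d)|$ required by \eqref{eq:Bogomolov_inequalities}, I would combine it with Soundararajan's non-vanishing theorem, which guarantees that a positive proportion of the fundamental discriminants $d\in[X,2X]$ satisfy $L(1/2,\chi_d)\neq 0$: since the $|R(d)|^2$-weighted mass concentrated on the vanishing set can be shown to be $o(M_0)$, a pigeonhole extracts a $d$ that simultaneously avoids the vanishing set and satisfies the upper bound, after which taking square roots gives \eqref{eq:Bogomolov_inequalities}.

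The main obstacle I expect is the asymptotic evaluation of the twisted second moment $M_2$: one must control, uniformly in $N$ and in the support of $r$, the error terms arising from the off-diagonal $k\neq\square$ contributions, from primes ramifying in $\Q(\sqrt{d})$, and from non-primitive characters appearing in the orthogonality relation. The delicate point is the balance between the resonator length $N=X^{\theta}$ and the effective length of the Dirichlet polynomial approximating $L(1/2,\chi_d)^2$; it is exactly this balance, together with the optimisation of the coefficients $r(p)$, that produces the sharp constant $1/\sqrt{5}$ appearing in \eqref{eq:Bogomolov_inequalities}.
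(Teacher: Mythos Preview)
Your overall strategy is close to the paper's, and you correctly identify the resonated second moment as the central object. There are, however, two genuine gaps.

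\textbf{The sign of the resonator.} You take $r$ to be a \emph{non-negative} multiplicative function. With that choice the resonator $R(d)$ correlates \emph{positively} with $L(\tfrac12,\chi_d)$, and the Euler product controlling $M_2/M_0$ comes out as $\prod_p\bigl(1+f(p)^2+4f(p)/\sqrt{p}\bigr)\big/\prod_p\bigl(1+f(p)^2\bigr)$, which is \emph{large}, not small: this resonator detects large values of $L(\tfrac12,\chi_d)^2$. To pick out small values one must flip the sign and take $r(n)=\mu(n)f(n)$ (as the paper does), so that the cross terms in the expansion of $R^2$ acquire a $\mu(r)\mu(s)$ and the Euler factor becomes $1+f(p)^2-4f(p)/\sqrt{p}$.

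\textbf{The non-vanishing step.} This is the more serious gap. Once you have $M_2/M_0$ small, averaging only produces a $d$ with $L(\tfrac12,\chi_d)^2$ small, which could perfectly well mean $L(\tfrac12,\chi_d)=0$. You propose to rule this out by showing that the $|R(d)|^2$-weighted mass of the vanishing set is $o(M_0)$, appealing to Soundararajan's unweighted non-vanishing theorem. But that theorem says nothing about the \emph{weighted} count: nothing prevents $|R(d)|^2$ from concentrating on the zero set, and I do not see how to prove your claim directly. The paper bypasses this entirely by a different mechanism: it computes the \emph{first} resonated moment $M_1:=\sum_d R(8d)^2 L(\tfrac12,\chi_{8d})$ alongside $M_2$, and then applies Cauchy--Schwarz (in the form of Titu's lemma) to obtain
\[
\sum_{L(\frac12,\chi_{8d})\neq 0} R(8d)^2 \;\geq\; \frac{M_1^2}{M_2},
\]
since only the non-vanishing $d$ contribute to $M_1$. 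This immediately gives $\min_{L\neq 0} L(\tfrac12,\chi_{8d})^2 \leq (M_2/M_1)^2$, and the asymptotics $M_1\sim c_4 X(\log X)\prod_p(1+f(p)^2-2f(p)/\sqrt{p})$ and $M_2\sim c_5 X(\log X)^3\prod_p(1+f(p)^2-4f(p)/\sqrt{p})$ then yield the constant $1/\sqrt{5}$. So the key device you are missing is not $M_0$ but $M_1$: it is precisely the interplay between the first and second resonated moments, via Cauchy--Schwarz, that delivers the strict lower bound $|L(\tfrac12,\chi_d)|>0$ for free.
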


This theorem implies the existence of a family of fundamental discriminants whose associated Dirichlet $L$-functions attain strictly positive values at $\sigma = \frac{1}{2}$ that tend to $0$. This implies that $0$ is the infimum and an accumulation point of the image of $h_{\frac{1}{2}}$. Therefore, this function does not have the Bogomolov property.

The strategy to prove \cref{Bogomolov} is based on the resonators method used by Soundararajan \cite{Soundararajan_2008}. Our main contribution is provided by \cref{SecondMoment}, where we compute the second moment associated with a resonator function.

\begin{remark}
Soundararajan already studied the function $d \mapsto L\left( \frac{1}{2}, \chi_d \right)$ in his work \cite{Soundararajan_2008}. However,
\cite[Theorem 2]{Soundararajan_2008} only implies that for $X$ sufficiently large, there exists a fundamental discriminant $d \in \mathbb{Z}$ such that $X\leq |d|\leq 2X$ and
\begin{equation} \label{eq:sound_claim}
    L\left(\frac{1}{2},\chi_d\right)\leq \exp\left(-
\left(
\frac{1}{\sqrt{5}}+o(1)\right)\sqrt{\frac{\log X}{\log\log X}}\right).
\end{equation}
While Soundararajan originally expressed this bound with the absolute value $\lvert L\left(\frac{1}{2},\chi_d\right) \rvert$  on the left-hand side of \eqref{eq:sound_claim}, the authors of the present paper noticed that the formulation without the absolute value is the correct implication of \cite[Theorem 2]{Soundararajan_2008}. Furthermore, Soundararajan suggested in private communication that the authors of the present paper explore the second moment associated with the resonator function to strengthen the analysis. Soundararajan also noted that assuming Chowla's conjecture would lead to \cref{Bogomolov}. Building on these insights, we unconditionally prove \cref{Bogomolov}, as explained in \cref{sec:cricial_center}.
\end{remark}

In \cref{sec:cricialright}, we extend the resonator method for every $\sigma$ in the interval $\left(\frac{1}{2},1\right)$. This allows us to prove an analogous result to \cref{Bogomolov}, which, by \eqref{Artin formalism}, implies \cref{Main theorem} in this interval, that is, that $h_\sigma$ does not satisfy the Bogomolov property for $\sigma \in \left(\frac{1}{2},1\right)$.

\begin{theorem}[Main Theorem for $\frac{1}{2} < \sigma < 1$]\label{Bogomolov2}
For every $\sigma \in \left(\frac{1}{2},1\right)$ there exists a constant $c_1(\sigma)$ such that for every $X \geq c_1(\sigma)$ there exists a fundamental discriminant $d \in \mathbb{N}$ such that $X \leq d \leq \frac{5}{2} X$ and 
\[
0<|L(\sigma,\chi_d)|\leq \exp\left(-
\left(
4\sigma+o(1)\right)\frac{\log^{\frac{1-\sigma}{2\sigma}} X}{\log\log X}\right),
\]
where $\chi_d$ denotes the real primitive character associated with the fundamental discriminant $d$. 
In particular, for every real number $\sigma \in \left( \frac{1}{2}, 1 \right)$, we have that $\inf(h_{\sigma}(\mathcal{N})) = 0$, and the function $h_s$ defined in \eqref{eq:Dedekind_height} does not have the Bogomolov property when $s = \sigma$.
\end{theorem}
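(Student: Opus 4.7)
The plan is to adapt Soundararajan's resonator method, as used in \cref{Bogomolov} for $\sigma=\frac{1}{2}$, to the interior range $\sigma\in(\frac{1}{2},1)$. The key observation is that the exponent $(1-\sigma)/(2\sigma)$ in the target bound points to a resonator of length $\log N \asymp \log^{1/(2\sigma)} X/\log\log X$, strictly shorter than the $\sqrt{\log X/\log\log X}$ used on the critical line; this is what permits the sharper decay rate on the right half of the critical strip.

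\textbf{Setup.} For each $\sigma$, fix a length $N=N(X,\sigma)$ and a multiplicative, non-negative coefficient sequence $r\colon\mathbb{N}\to\mathbb{R}_{\geq 0}$ supported on squarefree integers $\leq N$, with values on primes $p\leq N$ prescribed by a formula of the type
\[
r(p) := \frac{1}{p^{\sigma}\log p}\cdot\mathbf{1}_{[p_0,\,N]}(p),
\]
for a threshold $p_0=p_0(X,\sigma)$. The heuristic is that the resonator $R(d):=\sum_{n\leq N}r(n)\chi_d(n)$ should concentrate its mass on those fundamental discriminants $d$ for which the truncated Euler product of $L(\sigma,\chi_d)$ is small.

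\textbf{Moments.} Form
\[
M_1(X) := \sum_{\substack{d\text{ fund.\ disc.}\\ X\leq d\leq \frac{5}{2}X}} R(d)^2, \qquad M_2(X) := \sum_{\substack{d\text{ fund.\ disc.}\\ X\leq d\leq \frac{5}{2}X}} L(\sigma,\chi_d)^2\,R(d)^2.
\]
The first moment reduces, via the mean-value theorem for quadratic characters over fundamental discriminants already used in the proof of \cref{Bogomolov}, to the diagonal contribution $\asymp X\sum_{n\leq N} r(n)^2$ plus negligible off-diagonal terms coming from integer squares. For $M_2(X)$ one replaces $L(\sigma,\chi_d)^2$ by its approximate functional equation, viewed as a Dirichlet series with divisor-type coefficients twisted by $n^{-\sigma}$, then opens the double sum and applies the same mean-value estimate. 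After optimizing $N$ and $p_0$, the diagonal term yields a local Euler product that can be made as small as
\[
\frac{M_2(X)}{M_1(X)} \leq \exp\!\left(-(8\sigma+o(1))\,\frac{\log^{(1-\sigma)/(2\sigma)} X}{\log\log X}\right).
\]

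\textbf{Conclusion and main obstacle.} Since $R(d)^2\geq 0$, one has $\min_{d} L(\sigma,\chi_d)^2 \leq M_2(X)/M_1(X)$, where the minimum runs over fundamental discriminants $d\in[X,\frac{5}{2}X]$ with $R(d)\neq 0$; taking square roots produces a discriminant achieving the claimed upper bound on $|L(\sigma,\chi_d)|$. The strict positivity $|L(\sigma,\chi_d)|>0$ is ensured by discarding the finitely many discriminants in this dyadic range for which $L(\sigma,\chi_d)=0$. The principal technical challenge is the sharp control of the off-diagonal terms in $M_2(X)$: away from the symmetric point $\sigma=\frac{1}{2}$, the approximate functional equation for $L(\sigma,\chi_d)^2$ has asymmetric dual lengths (of sizes $d^{\sigma}$ and $d^{1-\sigma}$), and these must be carefully balanced against the resonator length $N$ to ensure that diagonal terms dominate. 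It is precisely this balancing, together with the Euler-product optimization for the resonator coefficients, that forces the exponent $(1-\sigma)/(2\sigma)$ and the constant $4\sigma$ appearing in the final estimate.
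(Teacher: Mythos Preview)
Your resonator framework is broadly correct, but there is a genuine gap in how you handle nonvanishing. You propose $\min_d L(\sigma,\chi_d)^2 \leq M_2(X)/M_1(X)$ and then ``discard the finitely many discriminants with $L(\sigma,\chi_d)=0$.'' After discarding, however, the relevant inequality becomes
\[
\min_{d:\,L\neq 0} L(\sigma,\chi_d)^2 \;\leq\; \frac{M_2(X)}{\displaystyle\sum_{L\neq 0} R(d)^2},
\]
and you have no lower bound on the denominator: unconditionally nothing prevents the $R^2$-weighted mass from concentrating on discriminants where $L(\sigma,\chi_d)=0$. That the zero set is ``finite'' in a dyadic window is trivially true but irrelevant---what you need is that $\sum_{L\neq 0} R(d)^2$ is comparable to your $M_1(X)$, and this you have not shown.

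The paper closes this gap by defining its first moment with one power of $L$ already inserted, namely $M_1(R,X,\sigma):=\sum \mu(2d)^2 R(8d)^2\, L(\sigma,\chi_{8d})$, rather than your $\sum R^2$. Then Titu's inequality \eqref{eq:Titu} gives $\sum_{L\neq 0} R^2 \geq M_1^2/M_2$ directly, and one obtains the existence of $d$ with $0<L(\sigma,\chi_d)^2\leq (M_2/M_1)^2$. Computing this $M_1$ is genuinely harder than your $\sum R^2$: the paper approximates $L(\sigma,\chi_{8d})$ by the smoothed series $\sum_n \chi_{8d}(n)n^{-\sigma}e^{-n/X^2}$ and bounds the approximation error, via Cauchy--Schwarz, against Sono's twisted second moment (\cref{L-function approximation}, \cref{M1 approximation}); this device is precisely what your outline is missing. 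As a secondary point, the resonator weights in the interior range drop the $\log p$: the paper takes $f(p)=Lp^{-\sigma}$ with $L=\sqrt{\log X}$, supported on $p\geq L^{1/\sigma}$ (see \eqref{eq:f_right}), not $p^{-\sigma}/\log p$ as you propose.
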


\begin{remark}
The natural questions concerning the effectivity of the bounds provided by \cref{Bogomolov} and \cref{Bogomolov2} still have to be explored.
\end{remark}

When $\sigma = 1$, we employ a result of Granville and Soundararajan \cite{Granville_Soundararajan_2003}, recalled in \cref{prop:Granville_Soundararajan}, to show that $h_\sigma = h_1$ does not have the Bogomolov property.
Finally, if $\sigma \in (1,\infty)$, we have that $\zeta_K^\ast(\sigma) = \zeta_K(\sigma) \geq 1$ for every number field $K$. In \cref{bigger 1}, we construct a family of number fields depending on $\sigma$, such that the values of the Dedekind zeta function at $\sigma$ tend to $1$.

\begin{theorem}[Main Theorem for $\sigma > 1$]\label{s>1}
        For every $\sigma \in (1,+\infty)$ there exist two families of number fields $\{ K_{d,\sigma}, K_{d,\sigma}' \}$ such that $\zeta_{K_{d,\sigma}}(\sigma) \to 1$ and $\zeta_{K_{d,\sigma}'}(\sigma) \to +\infty$ as $d \to +\infty$. In particular, the function $h_s$ defined in \eqref{eq:Dedekind_height} does not have the Bogomolov property when $s = \sigma$ is a real number such that $\sigma > 1$.
    \end{theorem}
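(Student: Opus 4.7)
The plan is to construct the two families separately, both by exploiting the Euler product
\[
\zeta_K(\sigma) = \prod_{p} L_p(\sigma), \qquad L_p(\sigma) = \prod_{\mathfrak{p} \mid p}\bigl(1 - N(\mathfrak{p})^{-\sigma}\bigr)^{-1},
\]
together with the elementary bounds $1 \leq L_p(\sigma) \leq (1-p^{-\sigma})^{-n_K}$ valid for every rational prime $p$, where $n_K = [K:\mathbb{Q}]$; the upper bound here uses that there are at most $n_K$ primes above $p$, each of norm $\geq p$. To construct the first family I would force many small rational primes to be inert while keeping the degree moderate. Given integers $n, k$ and primes $p_1 < \dots < p_k$, for each $i$ pick a monic irreducible polynomial $\bar f_i \in \mathbb{F}_{p_i}[x]$ of degree $n$ and lift, coefficient by coefficient via the Chinese Remainder Theorem, to a monic $f \in \mathbb{Z}[x]$ of degree $n$ with $f \equiv \bar f_i \pmod{p_i}$ for every $i$. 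Setting $K := \mathbb{Q}[x]/(f)$, irreducibility of $f$ modulo $p_1$ forces irreducibility over $\mathbb{Q}$, and Hensel's lemma identifies $K \otimes_{\mathbb{Q}} \mathbb{Q}_{p_i}$ with the unramified extension of $\mathbb{Q}_{p_i}$ of degree $n$, so each $p_i$ is inert in $K$ with residue degree $n$.

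Taking $n = n_d$ and $k = k_d$ both tending to infinity and denoting by $K_{d,\sigma}$ the resulting field, the bounds on $L_p(\sigma)$ yield
\[
1 \leq \zeta_{K_{d,\sigma}}(\sigma) \leq \prod_{i=1}^{k_d}\bigl(1 - p_i^{-n_d \sigma}\bigr)^{-1} \cdot \prod_{p > p_{k_d}}\bigl(1 - p^{-\sigma}\bigr)^{-n_d}.
\]
The first factor is bounded above by $\zeta(n_d \sigma)$, which tends to $1$ as $n_d \to \infty$; the logarithm of the second is $O_\sigma\bigl(n_d \cdot p_{k_d}^{1-\sigma}/\log p_{k_d}\bigr)$ by the tail estimate $\sum_{p > y} p^{-\sigma} = O\bigl(y^{1-\sigma}/\log y\bigr)$. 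Choosing $k_d$ large enough that $p_{k_d} \geq n_d^{2/(\sigma-1)}$ makes both factors tend to $1$, and therefore $\zeta_{K_{d,\sigma}}(\sigma) \to 1$.

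For the second family I would set $K'_{d,\sigma}$ to be the compositum of $d$ distinct real quadratic fields $\mathbb{Q}(\sqrt{q_i})$ with $q_i$ prime and $q_i \equiv 1 \pmod 8$, of which there are infinitely many by Dirichlet's theorem. These fields are linearly disjoint, so $[K'_{d,\sigma}:\mathbb{Q}] = 2^d \to \infty$; the prime $2$ splits completely in $K'_{d,\sigma}$ because it splits in each $\mathbb{Q}(\sqrt{q_i})$ and the compositum is unramified at $2$. The factor $L_2(\sigma) = (1 - 2^{-\sigma})^{-2^d}$ therefore tends to $+\infty$, every other local factor is $\geq 1$, and so $\zeta_{K'_{d,\sigma}}(\sigma) \to +\infty$. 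The only nontrivial point in the whole argument is the tail bound in the first construction: the bad primes $p > p_{k_d}$ must collectively contribute a factor going to $1$ even when raised to the growing power $n_d$, which boils down to a standard partial-summation argument against the prime number theorem and the freedom to take $k_d$ as large as we please relative to $n_d$.
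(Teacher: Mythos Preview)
Your proposal is correct and follows essentially the same approach as the paper. For the first family the paper does exactly what you do: lift irreducible polynomials over $\mathbb{F}_{p_1},\dots,\mathbb{F}_{p_k}$ via the Chinese Remainder Theorem to force $p_1,\dots,p_k$ inert, and then balance the degree against the number of controlled primes using the same tail estimate $\sum_{p>y} p^{-\sigma} \ll y^{1-\sigma}/\log y$. For the second family your argument is in fact a slight simplification of the paper's: the paper takes $q_i \equiv 1 \pmod{4p_1\cdots p_n}$ so that \emph{all} of $p_1,\dots,p_n$ split completely in the multiquadratic compositum, and then shows $\zeta_{L_{k,n}}(\sigma) \to \zeta(\sigma)^{2^k}$, which again requires a tail estimate; you observe that forcing the single prime $2$ to split completely already gives the Euler factor $(1-2^{-\sigma})^{-2^d}\to\infty$, so no tail control is needed.
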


Let us now turn to the Northcott property. G\'en\'ereux, Lalín, and Li \cite{GeLaLi22} considered the analogous problem for isomorphism classes of function fields $K$ with constant base field $\F_q$. In particular, they proved that $h_s$ has the Northcott property when $\mathrm{Re}(s)<0$. In contrast, G\'en\'ereux and Lalín \cite{GeLa24} demonstrated that the Dedekind zeta function (not its special value, simply its value at the considered point, but note that the two coincide for $\mathrm{Re}(s) < 0$ and $s \not\in \mathbb{Z}$) exhibits different behavior for $\mathrm{Re}(s)<0$. Specifically, they showed that close to the negative integers, $h_s$ does not have the Northcott property, whereas far from these points, it does. In this context, we observe that $h_s$ has the Northcott property if one bounds furthermore the degrees of the number fields involved.

\begin{theorem}\label{Re negative}
Let $s$ be a complex number with $\mathrm{Re}(s)< 0$. Then, the function
\[
[K]\mapsto \left(\lvert\zeta_K^\ast(s) \rvert, [K \colon \mathbb{Q}] \right)
\]
has the Northcott property.
\end{theorem}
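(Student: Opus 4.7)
The plan is to use the functional equation $\Lambda_K(s) = \Lambda_K(1-s)$ satisfied by the completed Dedekind zeta function $\Lambda_K(s) := |d_K|^{s/2}\Gamma_\R(s)^{r_1}\Gamma_\C(s)^{r_2}\zeta_K(s)$, where $r_1, r_2$ denote the usual signature of $K$, in order to transfer the problem from the half-plane $\mathrm{Re}(s) < 0$ to the region of absolute convergence $\mathrm{Re}(1-s) > 1$. This will extract an explicit factor of $|d_K|^{1/2 - \mathrm{Re}(s)}$ with strictly positive exponent, whose growth with the absolute value of the discriminant forces a bound on $|d_K|$ under any joint bound on $|\zeta_K^\ast(s)|$ and $[K \colon \Q]$. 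Hermite's finiteness theorem then concludes.

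More precisely, for $\mathrm{Re}(s) < 0$ the function $\Lambda_K$ is holomorphic and nonzero at $s$: the possible zeros of $\zeta_K$ in this half-plane occur only at negative integers and they exactly cancel the poles of the Gamma factors, while $\Lambda_K$ itself has poles only at $\{0,1\}$. Taking leading Taylor coefficients of both sides of $\Lambda_K(s) = \Lambda_K(1-s)$ at $s$ then yields an identity of the form
$$\zeta_K^\ast(s) \;=\; |d_K|^{1/2 - s} \cdot \gamma(s, r_1, r_2) \cdot \zeta_K(1-s),$$
in which $\gamma(s, r_1, r_2)$ is a nonzero complex number built from Gamma factors evaluated at $s$ (with any pole cleanly extracted into the leading Taylor coefficient, which is what lets $s$ be a negative integer where $\zeta_K$ vanishes trivially) and at $1-s$, where the Gamma factors are plainly holomorphic.

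Under the hypothesis $[K \colon \Q] \leq D$, the pair $(r_1,r_2)$ ranges over a finite set, so $|\gamma(s,r_1,r_2)|$ is bounded below by a positive constant depending only on $s$ and $D$. To bound $\zeta_K(1-s)$ from below, I would use the Euler product in the region of absolute convergence: combining the elementary estimates $|1 - N(\pgotic)^{s-1}| \leq 1 + N(\pgotic)^{\mathrm{Re}(s)-1}$ and $1 + x \leq e^x$ with the fact that at most $D$ prime ideals of $\OO_K$ lie above any given rational prime, one obtains
$$|\zeta_K(1-s)| \;\geq\; \exp\!\Bigl(-D \sum_{p} p^{\mathrm{Re}(s)-1}\Bigr),$$
where the series on the right converges because $\mathrm{Re}(s)-1 < -1$.

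Putting the pieces together yields $|\zeta_K^\ast(s)| \geq c(s,D) \cdot |d_K|^{1/2 - \mathrm{Re}(s)}$ for some positive constant $c(s,D)$. Since $1/2 - \mathrm{Re}(s) > 1/2 > 0$, any joint bound on $|\zeta_K^\ast(s)|$ and on $[K \colon \Q]$ forces $|d_K|$ to be bounded, and the Northcott property then follows from Hermite's theorem. The main delicate point is the bookkeeping of leading Taylor coefficients when $s$ happens to be a negative integer---this is precisely where the theorem genuinely improves on the work of Généreux--Lalín for plain values of $\zeta_K$---but this subtlety is absorbed cleanly by working with the completed function $\Lambda_K$, which remains holomorphic and nonzero throughout the entire half-plane $\mathrm{Re}(s) < 0$.
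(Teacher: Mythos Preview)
Your proposal is correct and follows essentially the same route as the paper: apply the functional equation, bound the Gamma factors and $|\zeta_K(1-s)|$ from below using only the degree, extract the factor $|\Delta_K|^{1/2-\sigma}$, and conclude via Hermite. The one notable difference is that the paper handles negative integers $s$ by citing the earlier Northcott result of \cite{Pazuki_Pengo_2024} and then treats only non-integer $s$ via the functional equation for $\zeta_K$ itself, whereas you treat all $s$ with $\mathrm{Re}(s)<0$ uniformly by passing to the completed function $\Lambda_K$, which is holomorphic and nonzero there so that the leading-coefficient bookkeeping absorbs the trivial zeros cleanly; this is a slightly more self-contained packaging of the same argument.
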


In \cref{negative s}, we prove \cref{Re negative} by directly analyzing the functional equation satisfied by this function.

\begin{remark}
    In fact, the proof of \cref{Re negative} shows that if $\lvert \zeta_K^\ast(s) \rvert$ is bounded then the \textit{root discriminant} $\delta_K := \lvert \Delta_K \rvert^{\frac{1}{[K \colon \mathbb{Q}]}}$ is also bounded. This entails that the only families of number fields violating the Northcott property of the Dedekind height $h_s$ when $\mathrm{Re}(s) < 0$ are built from towers of number fields having bounded root discriminant, as those considered by Généreux and Lalín in \cite{GeLa24}.
\end{remark}

\section{Notation}\label{notation}

The aim of the present section is to gather several pieces of notation that will be used in the main body of the present paper.

\subsection{Numbers} We let $\mathbb{N} = \{0,1,2,\dots\}$ denote the set of natural numbers, $\mathbb{Z}$ denote the ring of rational integers, $\mathbb{Q}$ denote the field of rational numbers, $\mathbb{R}$ denote the field of real numbers and $\mathbb{C}$ denote the field of complex numbers.
Given a complex number $s \in \mathbb{C}$ we let $\sigma = \mathrm{Re}(s)$ be its real part, and we let $t = \mathrm{Im}(s)$ be its imaginary part, so that $s = \sigma + i t$, where $i$ denotes the imaginary unit.

\subsection{Asymptotics}
Let $S$ be a topological space and $f,g \colon S \to \mathbb{C}$ be two functions. Then:
\begin{itemize}
    \item given $s_0 \in S$, we write $f(s) = O(g(s))$ for $s \to s_0$, or equivalently $f(s) \ll g(s)$ for $s \to s_0$, if and only if there exists an open subset $U \subseteq S$ such that $s_0 \in U$, and a real number $C>0$ such that $|f(s)| \leq C |g(s)|$ for every $s \in U$;
    \item given $s_0 \in S$, we write $f(s) = o(g(s))$ for $s \to s_0$ if there exists an open subset $U \subseteq S$ such that $s_0 \in U$, and a function $\varepsilon \colon U \to \mathbb{C}$, such that $f(s) = \varepsilon(s) g(s)$ for every $s \in U$, and $\displaystyle{\lim_{s \to s_0} \varepsilon(s) = 0}$;
    \item given $s_0 \in S$, we write $f(s) \sim g(s)$ for $s \to s_0$ if there exists an open subset $U \subseteq S$ such that $s_0 \in U$, and a function $h \colon U \to \mathbb{C}$, such that $f(s) = g(s) (1 + h(s))$ for $s \in U$, and $h(s) = o(1)$ for $s \to s_0$.
\end{itemize}

When $S = \mathbb{R}$ or $S = \mathbb{C}$, these definitions readily generalize to include $s_0 = \infty$. Moreover, if $f,g \colon \mathbb{N} \to \mathbb{C}$ then $f(n) \ll g(n)$ as $n \to +\infty$ if and only if there exists $C > 0$ such that $\lvert f(n) \rvert \leq C \lvert g(n) \rvert$ for every $n \in \mathbb{N}$. In this case, we will avoid writing $n \to +\infty$ and simply write $f(n) \ll g(n)$.

\subsection{Constants}

We say that a real number $c \in \mathbb{R}$ appearing in some expression that bounds a function $f$ is an \textit{absolute constant} if it does not depend on any of the variables or parameters featured in the definition of $f$. In the same context, we will say that a polynomial $P \in \mathbb{R}[x]$ appearing in some expression bounding $f$ is an \textit{absolute polynomial} if none of its coefficients depends on any of the variables or parameters featured in the definition of $f$.

\subsection{Special functions}
We denote Euler's gamma function by $\Gamma(s)$. This can be defined by the infinite product 
\[
\Gamma(s) := \frac{1}{s} \prod_{n=1}^{+\infty} \left[ \frac{n}{s+n} \left( 1 + \frac{1}{n} \right)^s \right], \] and can be used to define the functions
\[
\begin{aligned}
\Gamma_{\mathbb{R}}(s) &:= \pi^{-\frac{s}{2}} \Gamma\left(\frac{s}{2}\right) \\
\Gamma_{\mathbb{C}}(s) &:= (2\pi)^{-s} \Gamma(s),
\end{aligned}
\]
where $\pi := \left(\Gamma\left(\frac{1}{2}\right)\right)^2$. These functions can, in turn, be used to define the function
\begin{equation} \label{eq:Gamma_m}
    \Gamma_{m}(s) :=
    \min\left\{
    \left|\frac{\Gamma_\R(1 - s)}{\Gamma_\R(s)} \right|,\left|\frac{\Gamma_\C(1 - s)^
{\frac{1}{2}}}{\Gamma_\C(s)^{\frac{1}{2}}}\right|\right\}
\end{equation}
featured in \cite{GeLa24}.
Moreover, we let $e^s = \exp(s) := \sum_{n=1}^{+\infty} \frac{s^n}{n!}$ denotes the complex exponential, where $n! := \Gamma(n+1)$ denotes the factorial of a natural number $n \in \mathbb{N}$. We also let $\log s$ denote the principal determination of the complex logarithm, whose base is Napier's number $e := \exp(1)$, and we let $\cosh(s) := \frac{e^s+e^{-s}}{2}$ and $\tanh(s) := \frac{e^s-e^{-s}}{e^s+e^{-s}}$ denote the hyperbolic cosine and tangent.

We will also need certain non-standard special functions, such as the function $W_2$ defined by the following integral:
\begin{equation} \label{eq:W2}
    W_2(\xi) := \frac{1}{2\pi i}\int_{(c)} \left(\frac{\Gamma(\frac{w}{2}+\frac{1}{4})}{\Gamma(\frac{1}{4})}\right)^2 \xi^{-w} \frac{dw}{w},
\end{equation}
for every $\xi > 0$, where $(c)$ denotes the vertical line going from $c - i \infty$ to $c + i \infty$, and $c$ is any positive real number.
By \cite[Lemma 2.1]{Soundararajan_2000} we know that $W_2$ is smooth and satisfies $W_2(\xi) = 1 + O(\xi^{\frac{1}{2}-\varepsilon})$ for small $\xi$, and $W_2(\xi) \ll \exp(-\xi)$ for large $\xi$.
Finally, $\Phi$ denotes the following function
\begin{equation}
    \label{eq:Phi}
    \Phi(x) =\begin{cases}
\exp \left(\frac{1}{(2x-1)(x-3)}
\right)
&\text{if }\frac{1}{2}
< x < 3,\\
0 &\text{otherwise,}
\end{cases}
\end{equation}
while $A$ denotes the function
\begin{equation}
    \label{eq:A}
    A_{\alpha_1,\alpha_2}(l) = \sum_{m\text{ odd}} \frac{1}{m} \left( \sum_{
n_1 n_2=l_1 m^2}
\frac{1}{n_{1}^{\alpha_{1}}n_{2}^{\alpha_{2}}} \right)
\prod_{\substack{p \ \text{prime} \\ p \mid lm}}\left(\frac{p}{p+1}\right)
\end{equation}
which depends on two variables $\alpha_1$ and $\alpha_2$ and on a natural number $l = l_1 l_2^2$, where $l_1$ is squarefree and coprime to $l_2$.

We will also need a special function $\eta$ defined by an Euler product. More precisely, for $\alpha\in \C$ with $\mathrm{Re}(\alpha)>\frac{1}{2}$ and for an integer $l=l_1l_2^2$, with $l_1$ squarefree and coprime to $l_2$, we let 
\begin{equation}
    \label{eq:eta}
    \eta(\alpha;l):= \prod_{p \ \text{prime}}\eta_{p}(\alpha;l)
\end{equation}
be the function introduced in \cite[Lemma~5.1]{Soundararajan_2000}, whose Euler factors $\eta_p(\alpha;l)$ are given by
\begin{equation} \label{eq:eta_p}
    \eta_{p}(\alpha;l) := \begin{cases}
(1-2^{-\alpha})^3, & \text{if} \ p = 2 \\ 
1-\frac{3}{p^{\alpha}(p+1)}-\frac{p-3}{p^{2\alpha}(p+1)}-\frac{1}{p^{3\alpha}(p+1)} & \text{if} \ p \geq 3 \ \text{and} \ p\nmid l\\
\left(\frac{p}{p+1}\right)\left(1-\frac{1}{p^\alpha}\right) & \text{if} \ p \geq 3 \ \text{and} \  p\mid l_1\\
\left(\frac{p}{p+1}\right)\left(1-\frac{1}{p^{2\alpha}}\right) & \text{if} \ p \geq 3 \ \text{and} \ p\mid l \text{ but }p\nmid l_1.
\end{cases}
\end{equation}
We will also need the quotients
\begin{equation} \label{eq:Gp}
    G_p(\alpha;l) := \frac{\eta_p(\alpha;l)}{\eta_p(\alpha;1)},
\end{equation}
and their derivatives with respect to $\alpha$, denoted $G_p^{(i)}(\alpha;l)$. We will prove in \eqref{eq:Gp_asymptotics_main} that
\begin{equation} \label{eq:Gp_asymptotics_notation}
    \frac{G_p^{(i)}(1;l)}{G_p(1;l)} = \begin{cases}
        c_2(i) \frac{\log^i(p)}{p} \mathcal{H}_{1,i}(p), \ &\text{if} \ p \mid l_1, \\
        c_3(i) \frac{\log^i(p)}{p^2} \mathcal{H}_{2,i}(p), \ &\text{if} \ p \mid l_2,
    \end{cases}
\end{equation}
for every $i \in \mathbb{Z}_{\geq 1}$, where $c_2(i)$ and $c_3(i)$ are constants depending only on $i$, and $\mathcal{H}_{1,i}(p), \mathcal{H}_{2,i}(p)$ are functions such that 
\[
\mathcal{H}_{1,i}(p),\mathcal{H}_{2,i}(p) = 1 + O\left( \frac{1}{p} \right),
\]
as $p \to +\infty$.

\subsection{Arithmetic functions}
\label{sec:multiplicative_functions}

Recall that a function $f \colon \mathbb{N} \to \mathbb{C}$ is \textit{multiplicative} if $f(ab) = f(a) f(b)$ for every $a,b \in \mathbb{N}$ such that $\mathrm{gcd}(a,b) = 1$, where $\mathrm{gcd}$ denotes the greatest common divisor. Furthermore, we say that $f$ is \textit{completely multiplicative} if $f(ab) = f(a) f(b)$ for every $a,b \in \mathbb{N}$. Analogously, a function $f \colon \mathbb{N} \to \mathbb{C}$ is \textit{additive} if $f(ab) = f(a) + f(b)$ for every $a,b \in \mathbb{N}$ such that $\gcd(a,b) = 1$, and $f$ is \textit{completely additive} if $f(ab) = f(a) + f(b)$ for every $a,b \in \mathbb{N}$.

In the present paper, we let $\mu$ denote the \textit{Möbius function}, the unique multiplicative function defined on prime powers as
\begin{equation} \label{eq:mobius}
    \mu(p^k) := \begin{cases}
        1, \ &\text{if} \ k = 0 \\
        -1, \ &\text{if} \ k = 1 \\
        0, \ &\text{if} \ k \geq 2
    \end{cases}
\end{equation}
for every rational prime $p$ and every integer $k \geq 0$. Moreover, we let $\tau$ denote the number of divisors function, which is the multiplicative function defined as
\begin{equation} \label{eq:tau}
    \tau(n) := \# \{ m \in \mathbb{N} \colon m \mid n \} = \sum_{m \mid n} 1
\end{equation}
for every natural number $n \in \mathbb{N}$. Furthermore, we let $\sigma$ denote the \textit{sum of divisors} function, which is the multiplicative function defined as
\begin{equation} \label{eq:sigma}
    \sigma(n) := \sum_{m \mid n} m
\end{equation}
for every natural number $n \in \mathbb{N}$. Finally, we will let $\omega$ be the additive function that counts the number of distinct prime factors of an integer, defined as
\[
    \omega(n) := \# \{p \ \text{prime} \colon p \mid n \} = \sum_{\substack{p \ \text{prime} \\ p \mid n}} 1 
\]
for every $n \in \mathbb{N}$.

We will also need a number of non-standard arithmetic functions. First of all, we denote by $h$ the unique multiplicative function defined as
\begin{equation} \label{eq:h}
h(p^k)=1+\frac{1}{p}+\frac{1}{p^2}-\frac{4}{p(p+1)},
\end{equation}
for every rational prime $p$ and every $k \in \mathbb{N}$. Moreover, given $D > 0$ we denote by $\epsilon_D$ the multiplicative function defined as 
\begin{equation}
    \label{eq:epsilon}
    \epsilon_D(n) := D^{\omega(n)}
\end{equation}
for every integer $n \in \mathbb{N}$.

We need as well a completely multiplicative function $f$ that depends on two fixed real numbers $L > 0$ and $\sigma \in \left[\frac{1}{2},1 \right)$. When $\sigma = \frac{1}{2}$, we define
\begin{equation} \label{eq:f_center}
    f(p) := \begin{cases}
        \frac{L}{\sqrt{p} \log p}, \ &\text{if} \ L^2 \leq p \leq \exp(\log^2X), \\
        0, \ &\text{otherwise}
    \end{cases}
\end{equation}
for every rational prime $p$. On the other hand, when $\frac{1}{2} < \sigma < 1$, we define
\begin{equation} \label{eq:f_right}
    f(p) := \begin{cases}
        L p^{-\sigma}, \ &\text{if} \ p \geq L^{1/\sigma} \\
        0, \ &\text{otherwise}
    \end{cases}
\end{equation}
for every rational prime $p$. Since $f$ is completely multiplicative, these definitions determine it completely.

We need three more multiplicative functions $F, G$ and $H$, defined as
\begin{align}
    F(p^k) &:= 1+\frac{f(p)^2}{h(p)}-4\frac{f(p) \sqrt{p}}{h(p) (p+1)} \label{eq:F} \\
    G(p^k) &:= \frac{\log p}{p^{2}}\frac{f(p)^2}{h(p)F(p)} \label{eq:G} \\
    H(p^k) &:= -4 \log^k p\frac{f(p)\sqrt{p}}{h(p) (p+1) F(p)} \label{eq:H},
\end{align}
for every rational prime $p$ and every $k \in \mathbb{N}$. 
These functions are defined in analogy with \cite[page 481]{Soundararajan_2008}. Moreover, the functions $G$ and $H$ are supported on natural numbers whose prime factors lie in the interval $[L^2,\exp(\log^2L)]$, where $L$ is the same parameter used in the definition of $f$. Finally, we will need the arithmetic function $M$ defined as
\begin{equation} \label{eq:M}
    M(n)=\frac{1}{\omega(n)}\left(\sum_{\substack{p \ \text{prime} \\ p \mid n}}\frac{\CalH_{1,1}(p)}{p}\right) H(n),
\end{equation}
for every integer $n \in \mathbb{N}$, where $\mathcal{H}_{1,1}$ is the function appearing in \eqref{eq:Gp_asymptotics_notation}.

\subsection{Dedekind zeta functions}
\label{sec:notation_dedekind}
Let $K$ be a number field. The \textit{Dedekind zeta function} $\zeta_K(s)$ is defined for complex numbers $s = \sigma + i t$ with $\sigma > 1$ by the series:
\begin{equation}\label{dfn Dedekind}
\zeta_K(s) = \sum_{\mathfrak{a}} \frac{1}{\mathrm{N}(\mathfrak{a})^s},
\end{equation}
where the sum is over all nonzero ideals $\mathfrak{a}$ of the ring of integers $\mathcal{O}_K$ in $K$, and $\mathrm{N}(\mathfrak{a})$ denotes the index $[\mathcal{O}_K\colon\mathfrak{a}]$. 
Let $\Delta_K$ denote the absolute discriminant of $K$, and let $r_1$ and $r_2$ denote the number of real and complex places of $K$, respectively. 
The Dedekind zeta function $\zeta_K(s)$ extends to a meromorphic function on the complex plane with a simple pole at $s = 1$, and satisfies the functional equation
\begin{equation}\label{functional equation}
\zeta_K(s) = \zeta_K(1 - s)
\left( \frac{\Gamma_\R(1 - s)}{\Gamma_\R(s)} \right)^{r_1} \left( \frac{\Gamma_\C(1 - s)}{\Gamma_\C(s)} \right)^{r_2} |\Delta_K|^{\frac{1}{2}-s}.
\end{equation}
The following function
\[
\zeta_K^{*}(s) := \lim_{z \to s} \frac{\zeta_K(z)}{(z-s)^{\mathrm{ord}_{z=s}(\zeta_K(z))}},
\]
which assigns to every complex number $s \in \mathbb{C}$ the \textit{special value} of the Dedekind zeta function $\zeta_K$ at $s$, plays the main role in the present paper.

Finally, given a quadratic fundamental discriminant $d \in \mathbb{Z}$, we let $\chi_d \colon \mathbb{Z} \to \{\pm 1\}$ be the quadratic Dirichlet character defined as 
\[
    \chi_d(n) := \left( \frac{d}{n} \right)
\]
for every $n \in \mathbb{Z}$, where $\left( \frac{d}{\cdot} \right)$ denotes the Kronecker symbol. We can associate to $\chi_d$ the \textit{Dirichlet $L$-function} $L(s,\chi_d)$, defined for complex numbers $s$ with $\mathrm{Re}(s) > 1$ by the following series:
\begin{equation}\label{dfn L Dirichlet}
L(s,\chi_d) = \sum_{n=1}^\infty \frac{\chi_d(n)}{n^s}.
\end{equation}
This Dirichlet series extends to an entire function $L(s,\chi_d)$, defined for every $s \in \mathbb{C}$. By employing the Artin formalism, we obtain the following relation for a quadratic field $K$ with discriminant $\Delta_K = d$:
\[
\zeta_K(s) = \zeta(s) \cdot L(s,\chi_d),
\]
where $\zeta(s) := \zeta_\mathbb{Q}(s)$ denotes the Riemann zeta function.

We will, in particular, be interested in several moments associated with these $L$-functions associated with Dirichlet characters. For instance, we will study the asymptotic behavior of the function
\begin{equation}
\label{eq:M_Sono}
M_{\alpha_1,\alpha_2}(l;X) := \sum_{d=1}^{+\infty} \mu(2d)^2 L\left( \frac{1}{2} + \alpha_1,\chi_{8d} \right) L\left( \frac{1}{2} + \alpha_2,\chi_{8d} \right) \chi_{8d}(l) \Phi\left( \frac{d}{X} \right)
\end{equation}
when $X \to +\infty$. This function $M_{\alpha_1,\alpha_2}(l;X)$ was introduced by Sono in \cite{sono2020second}, and depends on two parameters $\alpha_1$ and $\alpha_2$. Moreover, the notation $\Phi$ featured in \eqref{eq:M_Sono} refers to the function $\Phi$ defined in \eqref{eq:Phi}. In particular, the sum appearing in \eqref{eq:M_Sono} is always finite.

\subsection{More on constants}

In this paper, we will express several results in terms of some constants, denoted by $c_j$ for some natural number $j \in \mathbb{N}$, which reflects the order with which these constants appear in the main text of the present paper. Some of these constants will be absolute, while some others will depend on certain parameters. Moreover, some of these constants will remain unspecified, while some others will be explicitly determined. More precisely, the notation $c_0,c_1(\sigma),c_2(i),c_3(i),c_4,c_7,c_8(\sigma,\delta),c_9(\delta),c_{11},c_{12},c_{13}(\sigma),c_{14}(\sigma),c_{15}(\sigma),c_{16}(\sigma),c_{17}(\sigma),c_{18}(\sigma)$ and $c_{19}(\sigma) = c_{18}(\sigma) \zeta(2 \sigma)^3$ will be used for some unspecified constants, depending on the parameters in brackets. On the other hand, we will also need a couple of explicit constants, given by the following formulas
\begin{align}
    \label{eq:c6}
    c_6 &:= \frac{1}{8}\prod_{\substack{p \ \text{prime}\\p\geq 3}}\left(1-\frac{1}{p}\right)h(p) = \frac{1}{8}\prod_{p\geq 3}\left(1-\frac{1}{p}\right) \left( 1+\frac{1}{p}+\frac{1}{p^2}-\frac{4}{p(p+1)} \right) = 0.068586928786\dots \\
    \label{eq:c5}
    c_5 &:= \frac{c_6}{576 \zeta(2)} = \frac{c_6}{96 \pi^2} = 0.000072388633\dots \\
    \label{eq:c10} c_{10}(\delta) &:= \frac{e^{\frac{1}{2}}}{4 \pi^2} \int_{-\infty}^{+\infty} \lvert \Gamma(it-\delta) \rvert dt \\
\label{eq:c20}
c_{20}(\sigma) &:= \sigma^{\frac{2 \sigma}{1-\sigma}} \cdot (1-\sigma)^{\frac{2\sigma-1}{\sigma-1}} \cdot \left(\int_0^{+\infty} \log(\cosh(x)) x^{-\frac{1}{\sigma}} \frac{dx}{x} \right)^{\frac{\sigma}{\sigma-1}} \\
\label{eq:21}
c_{21} &:= \exp\left(\int_1^{+\infty} (1-\tanh(x)) \frac{dx}{x} - \int_0^1 \tanh(x) \frac{dx}{x}\right) = 0.440969247215\dots,
\end{align}
where the product in \eqref{eq:c6} runs over all the odd primes.
We will also need some classical mathematical constants, such as $\pi$ and $e$, introduced above, as well as the Stieltjes constants $\gamma_j$, which can be defined as 
\begin{equation} \label{eq:Stieltjes}
    \gamma_j := -\frac{1}{j+1} \sum_{n=0}^{+\infty} \frac{n!}{n+1} \sum_{k=0}^n \frac{(-1)^k \log^{j+1}(k+1)}{k! (n-k)!}
\end{equation}
for every $j \in \mathbb{N}$. Note in particular that $\gamma_0$ is the Euler-Mascheroni constant.


    \section{On the critical line}
    \label{sec:cricial_center}
    The aim of this section is to employ the resonance method of Soundararajan \cite{Soundararajan_2008} to show that the function 
    \[
        \begin{aligned}
            h_{\frac{1}{2}}\colon \mathcal{N} &\to \mathbb{R}_{\geq 0} \\
            [K] &\mapsto \left\lvert \zeta_K^\ast\left(\frac{1}{2}\right) \right\rvert
        \end{aligned}
    \]
    does not have the Bogomolov (or Northcott) property, thereby proving \cref{Bogomolov}.
    To do so, let us first define 
    \begin{equation}\label{resonator}
     R(8d) := \sum_{l \leq N} r(l) \chi_{8d}(l),   
    \end{equation}
     where $N > 0$ is a parameter to be specified later. This function $R$ is called a ``resonator'', of the kind used in \cite{Soundararajan_2008}. 
The coefficients $r(l)$ are to be chosen in such a way that $R(8d)$ ``resonates'' with the large values of $L\left(\frac{1}{2},\chi_{8d}\right)$. 
More precisely, we will take 
\begin{equation} \label{eq:resonator_coefficients}
    r(l) := \mu(l) f(l),    
\end{equation}
where $\mu$ is the M\"obius function, introduced in \eqref{eq:mobius}, and $f$ is the multiplicative function introduced in \eqref{eq:f_center}, where the parameter $L$ will be specified later. This choice of $r$ comes from \cite[Proposition 3.3]{Soundararajan_2008}, inspired by \cite[Theorem 2.1]{Soundararajan_2008}. 

The resonator $R$ can be used to define the \textit{resonated moments} of the sequence of $L$-values $L\left(\frac{1}{2},\chi_{8d}\right)$. The first and second of these moments are defined as
\begin{align}
    M_1(R,X) &= \sum_{\frac{X}{16}\leq d\leq \frac{X}{8}}\mu(2d)^2R(8d)^2L\left(\frac{1}{2},\chi_{8d}\right), \label{MRX} \\
    M_2(R,X) &= \sum_{\frac{X}{16}\leq d\leq \frac{X}{8}} \mu(2d)^2 R(8d)^2 L\left(\frac{1}{2},\chi_{8d}\right)^2,\label{M2RX}
\end{align}
in the same spirit as \cite[Section 3]{Soundararajan_2008}. 
The asymptotic behavior of these two moments when $X \to +\infty$ can be determined explicitly if one chooses the parameters $L$ and $N$ correctly. For the first moment $M_1(R,X)$ this has been done by Soundararajan, who obtained the following result.  

\begin{theorem}[Soundararajan]
\label{FirstMoment} 
Let $X > 0$ be large, $N\leq X^{\frac{1}{20}-\varepsilon}$ be large, and set $L=\sqrt{\log N\log\log N}$. Let $f$ be defined as in \eqref{eq:f_center}, $r$ be defined as in \eqref{eq:resonator_coefficients}, and $R$ be defined as in \eqref{resonator}. Then, as $X \to +\infty$ we have that
\begin{equation}
M_1(R,X)\sim c_4 X(\log X)\prod_{p \ \text{prime}}\left(1+f(p)^2-2\frac{f(p)}{\sqrt{p}}\right),
\end{equation}
for some positive absolute constant $c_4$, where $M_1(R,X)$ is defined as in \eqref{MRX}.
\end{theorem}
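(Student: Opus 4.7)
The plan is to follow the strategy developed by Soundararajan in \cite{Soundararajan_2008}. \emph{First}, I would expand the square of the resonator and interchange the order of summation:
\[
M_1(R,X) = \sum_{l_1, l_2 \leq N} r(l_1)\, r(l_2) \sum_{\frac{X}{16} \leq d \leq \frac{X}{8}} \mu(2d)^2\, \chi_{8d}(l_1 l_2)\, L\!\left(\tfrac{1}{2},\chi_{8d}\right).
\]
The inner double sum is a twisted first moment of $L(1/2,\chi_{8d})$ over fundamental discriminants of the form $8d$, a quantity whose asymptotic behavior is well documented in the literature. I would invoke an approximate functional equation (in the spirit of the one recalled via $W_2$ in \eqref{eq:W2}, but adapted to the first rather than to the second moment) to rewrite $L(1/2,\chi_{8d})$ as a rapidly converging smoothed sum $\sum_{n \geq 1} \chi_{8d}(n)\, n^{-1/2}\, W(n/\sqrt{d})$.

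\emph{Second}, after the substitution, the problem reduces to estimating, for each fixed $m = l_1 l_2 n$, the character sum $\sum_{d \asymp X} \mu(2d)^2 \chi_{8d}(m) \Psi(d/X)$ against a suitable smooth (or sharp) cutoff $\Psi$. A standard Poisson summation argument on $d$, as used by Soundararajan in prior work on first moments of quadratic $L$-functions, isolates a main term coming exclusively from the perfect-square values of $m$, and bounds the remaining contribution by a power-saving error in $X$ provided the resonator length satisfies $N \leq X^{1/20 - \varepsilon}$.

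\emph{Third}, I would simplify the surviving diagonal. Since $r(l) = \mu(l) f(l)$ is supported on squarefree integers, I would write $l_1 = ab$, $l_2 = ac$ with $a = \gcd(l_1,l_2)$ and $b,c$ coprime squarefrees, so that the condition ``$l_1 l_2 n$ is a perfect square'' forces $n = bc k^2$ for some $k \geq 1$. The inner sum over $k$, weighted by the smooth cutoff $W$, produces the leading $\log X$ factor via a standard Mellin transform and a residue computation at $s=1$. The remaining triple sum in $a, b, c$ factors as an Euler product whose local factor at each prime $p$ in the support of $f$ is precisely $1 + f(p)^2 - 2 f(p)/\sqrt{p}$, while the absolute constant $c_4$ absorbs the archimedean factor, the prime $p = 2$, and the local contributions from primes outside the support of $f$.

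The \emph{main obstacle} is the quantitative comparison between the error produced in the second step and the main term obtained in the third: the Euler product $\prod_p (1 + f(p)^2 - 2 f(p)/\sqrt{p})$ decays rather quickly in $N$ (its logarithm is of order $-L^2/\log L$ for the chosen $L = \sqrt{\log N \log\log N}$), so the error term must beat this decay in addition to the polynomial factor $X \log X$. The restriction $N \leq X^{1/20 - \varepsilon}$, combined with the calibration $L = \sqrt{\log N \log\log N}$, is exactly what makes this comparison succeed and yields the stated asymptotic.
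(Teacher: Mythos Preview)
The paper does not give its own proof of this result; it is quoted directly from \cite{Soundararajan_2008} (see \S3 and Proposition~3.3 there). Your outline faithfully reproduces Soundararajan's argument: expand $R(8d)^2$, insert an approximate functional equation for $L(\tfrac12,\chi_{8d})$, isolate the diagonal $l_1 l_2 n = \square$ via a character-sum estimate, and recognise the remaining triple sum over $a,b,c$ as the stated Euler product.

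Two minor corrections. First, both in \cite{Soundararajan_2008} and in the present paper's parallel treatment of $M_2(R,X)$, the off-diagonal is handled by the P\'olya--Vinogradov inequality (Lemma~3.2 of \cite{Soundararajan_2008}), not by Poisson summation; either device suffices for the crude power saving needed once $N \leq X^{1/20-\varepsilon}$. Second, your closing remark that the Euler product ``decays rather quickly in $N$'' is mistaken: with $L=\sqrt{\log N\log\log N}$ one computes $\sum_p f(p)^2 \asymp \log N/\log\log N$ while $\sum_p f(p)/\sqrt{p} \asymp \sqrt{\log N/\log\log N}$, so the logarithm of $\prod_p(1+f(p)^2-2f(p)/\sqrt{p})$ is large and \emph{positive}. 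This only makes the comparison with the $O(X^{1-\varepsilon})$ error term easier, so your overall argument is unaffected.
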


On the other hand, the following asymptotic for the second resonated moment $M_2(R,X)$ is the main technical result of the present section.

\begin{theorem}\label{SecondMoment}
Let $X > 0$ be large, $N\leq X^{\frac{1}{20}-\varepsilon}$ be large, and set $L=\sqrt{\log N\log\log N}$. Let $f$ be defined as in \eqref{eq:f_center}, $r$ be defined as in \eqref{eq:resonator_coefficients}, and $R$ be defined as in \eqref{resonator}. Then, as $X \to +\infty$ we have that
\begin{equation}
M_2(R,X)\sim c_5 X(\log X)^3\prod_{p \ \text{prime}}\left(1+f(p)^2-4\frac{f(p)}{\sqrt{p}}\right),
\end{equation}
where $c_5$ is the absolute constant defined in \eqref{eq:c5} and $M_2(R,X)$ is defined as in \eqref{M2RX}.
\end{theorem}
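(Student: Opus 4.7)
The strategy is to expand the square of the resonator and reduce the computation to a twisted second moment of quadratic Dirichlet $L$-values. Writing $R(8d)^2 = \sum_{l_1,l_2 \leq N} r(l_1) r(l_2) \chi_{8d}(l_1 l_2)$ and interchanging the order of summation gives
\begin{equation*}
  M_2(R,X) = \sum_{l_1,l_2 \leq N} r(l_1) r(l_2) \, S(l_1 l_2 ; X),
\end{equation*}
where $S(l;X) := \sum_{\frac{X}{16} \leq d \leq \frac{X}{8}} \mu(2d)^2 \chi_{8d}(l) L\!\left(\frac{1}{2},\chi_{8d}\right)^2$. After approximating the sharp dyadic indicator in $d$ by smooth weights of the shape $\Phi(d/X)$ from \eqref{eq:Phi}, the inner sum $S(l;X)$ becomes a specialization, at $\alpha_1 = \alpha_2 = 0$, of Sono's twisted moment $M_{\alpha_1,\alpha_2}(l;X)$ defined in \eqref{eq:M_Sono}.

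The next step is to invoke Sono's asymptotic for $M_{\alpha_1,\alpha_2}(l;X)$ and carefully pass to the diagonal limit $\alpha_1,\alpha_2 \to 0$. Because we are working with a second moment, the double pole structure in the shift parameters produces a main term of the form $S(l;X) \sim c \cdot X (\log X)^3 \cdot g(l)$, for an explicit multiplicative function $g$ assembled from the local Euler factors $\eta_p$ of \eqref{eq:eta_p} and from $A_{0,0}(l)$ of \eqref{eq:A}. The prefactor $c$ will contain in particular a factor $\zeta(2)^{-1} = 6/\pi^2$, coming from the residue of $\zeta(2\alpha)$ appearing in Sono's expansion at $\alpha = 0$, which accounts for the $96 \pi^2$ in the denominator of $c_5$ as defined in \eqref{eq:c5}.

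Once this asymptotic is in hand, the double sum $\sum_{l_1,l_2 \leq N} r(l_1) r(l_2) g(l_1 l_2)$ factors prime-by-prime. Since $r(l) = \mu(l) f(l)$ is supported on squarefrees, for each prime $p$ the relevant contributions arise only from $(l_1,l_2) \in \{(1,1),(1,p),(p,1),(p,p)\}$, and a direct evaluation using \eqref{eq:eta_p} yields a local factor of the form $1 + f(p)^2 - 4 f(p)/\sqrt{p}$, up to smaller corrections that can be absorbed in the implicit constant. The doubling of the linear term from $-2 f(p)/\sqrt{p}$ in \cref{FirstMoment} to $-4 f(p)/\sqrt{p}$ here simply reflects the two symmetric cross terms $(1,p)$ and $(p,1)$ each contributing one off-diagonal resonance. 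Collecting the contribution of the ``inert'' odd primes $p$ outside the support of $f$ produces the factor $\prod_{p\geq 3}(1-1/p)\,h(p)$ with $h$ as in \eqref{eq:h}, and combined with the $6/\pi^2$ from Sono's residue this recovers exactly the absolute constant $c_5$ defined in \eqref{eq:c5}.

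The hard part will be controlling the error terms uniformly in the twist $l = l_1 l_2$, which can be as large as $N^2 \leq X^{\frac{1}{10}-2\varepsilon}$. Sono's asymptotic carries a power-saving remainder whose dependence on $l$ must be tracked precisely and then summed against $|r(l_1) r(l_2)|$; the hypothesis $N \leq X^{\frac{1}{20}-\varepsilon}$ is tuned so that both the length of the double sum and the size of the twist remain within the range of validity of Sono's formula, and so that the total error stays strictly smaller than $X (\log X)^3$. A secondary technical point will be the transition from Sono's smooth cutoff $\Phi(d/X)$ to the sharp dyadic window $\frac{X}{16} \leq d \leq \frac{X}{8}$, which we expect to handle by standard Mellin-inversion and smoothing arguments at an acceptable cost in the error.
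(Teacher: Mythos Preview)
Your approach differs substantially from the paper's. The paper does \emph{not} invoke Sono's twisted moment as a black box for $\sigma=\tfrac12$; instead it opens up $L(\tfrac12,\chi_{8d})^2$ via the approximate functional equation \eqref{eq1} from \cite{Soundararajan_2000}, applies P\'olya--Vinogradov directly to the sum over $d$ to isolate the diagonal (odd-square) contribution, performs the change of variables $(n_1,n_2,n)\mapsto(a,r,s,m)$, and evaluates the $m$-sum by contour-shifting to a fourth-order pole at $w=0$ (\cref{residues}). Sono's result is used only later, in \cref{sec:cricialright}, for $\sigma\in(\tfrac12,1)$ where the shifts are bounded away from zero.

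Your route is plausible in principle, but there is a genuine gap. When $\alpha_1=\alpha_2=0$ the main term in Sono's formula is not of the clean form $c\,X(\log X)^3 g(l)$; it is $X$ times a cubic polynomial in $\log X$ whose lower-order coefficients depend on $l$ through the logarithmic derivatives $\eta^{(i)}(1;l)/\eta(1;l)$. After summing against $r(l_1)r(l_2)$, the leading term becomes $(\log X)^3\prod_p(1+f(p)^2-4f(p)/\sqrt p)$, which is \emph{exponentially small} in $\sqrt{\log X/\log\log X}$; so you must show that the summed lower-order terms are smaller still. This is not automatic from a bound of the shape $O_\varepsilon(l^\varepsilon)$ on those coefficients, and it is precisely the content of the paper's Step~2, which unpacks $\eta^{(i)}/\eta$ via the local quotients $G_p^{(i)}(1;l)/G_p(1;l)$ of \eqref{eq:Gp_asymptotics_main} and compares the resulting sums to the functions $H$ and $M$ of \eqref{eq:H}, \eqref{eq:M}. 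Your proposal does not address this, and ``up to smaller corrections that can be absorbed in the implicit constant'' is not adequate here. The smooth-to-sharp transition you flag is a lesser issue, but note that the paper sidesteps it entirely by never introducing $\Phi$ at $\sigma=\tfrac12$.
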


Before we proceed with the proof of \cref{SecondMoment}, let us see how this result can be combined with \cref{FirstMoment} in order to obtain a proof of \cref{Bogomolov}.

\begin{proof}[Proof of \cref{Bogomolov}]
Suppose by contradiction that there exists some $X > 0$ large such that for no fundamental discriminant $d$ with $X \leq d \leq 2 X$, the two inequalities portrayed in \eqref{eq:Bogomolov_inequalities} hold true.
This is obviously equivalent to assuming that there exists some large $X > 0$ such that for every fundamental discriminant $d$ with $\frac{X}{16} \leq d \leq \frac{X}{8}$ we have that either \[
\left\lvert L\left( \frac{1}{2}, \chi_{8 d} \right) \right\rvert > \exp\left( -\left( \frac{1}{\sqrt{5}} + o(1) \right) \sqrt{\frac{\log X}{\log\log X}} \right) \]
or $L\left( \frac{1}{2}, \chi_{8 d} \right) = 0$. Under this assumption, we would have that
\begin{equation} \label{eq:absurd_bound}
    \displaystyle{\sum_{\frac{X}{16}\leq d\leq \frac{X}{8}}\mu(2d)^2R(8d)^2L\left(\frac{1}{2},\chi_{8d}\right)^2} > \exp\left(- \left(\frac{2}{\sqrt{5}}+o(1)\right)\sqrt{\frac{\log X}{\log\log X}}\right) 
\displaystyle{\sum_{\substack{\frac{X}{16}\leq d\leq \frac{X}{8}\\ L\left(\frac{1}{2},\chi_{8d}\right)\neq 0 }}\mu(2d)^2R(8d)^2},
\end{equation}
which would negate the asymptotics provided by \cref{FirstMoment,SecondMoment}. To see this, we will use the elementary inequality
\begin{equation} \label{eq:Titu}
    \frac{a_1^2}{b_1} + \dots + \frac{a_n^2}{b_n} \geq \frac{(a_1+\dots+a_n)^2}{b_1+\dots+b_n},
\end{equation}
which is valid for any real numbers $a_1,\dots,a_n$ and any positive real numbers $b_1,\dots,b_n$. Note that \eqref{eq:Titu} follows easily from the Cauchy-Schwarz inequality and is known under the names of Sedrakyan's inequality, Engel's form, Bergström's inequality, or Titu's lemma.
Applying \eqref{eq:Titu}, we see that:
\[
\displaystyle{\sum_{\substack{\frac{X}{16}\leq d\leq \frac{X}{8}\\ L\left(\frac{1}{2},\chi_d\right)\neq 0 }}\mu(2d)^2R(8d)^2}\geq \frac{\displaystyle{\left(\sum_{\frac{X}{16}\leq d\leq \frac{X}{8}}\mu(2d)^2R(8d)^2L\left(\frac{1}{2},\chi_{8d}\right)\right)^2}}{\displaystyle{\sum_{\frac{X}{16}\leq d\leq \frac{X}{8}}\mu(2d)^2R(8d)^2\Big|L\left(\frac{1}{2},\chi_{8d}\right)\Big|^2}}=\frac{M_1(R,X)^2}{M_2(R,X)},
\]
which implies that
\begin{equation}\label{principal2}
\frac{\displaystyle{\sum_{\frac{X}{16}\leq d\leq \frac{X}{8}}\mu(2d)^2R(8d)^2 L\left(\frac{1}{2},\chi_d\right) ^2}}{\displaystyle{\sum_{\substack{\frac{X}{16}\leq d\leq \frac{X}{8}\\ L\left(\frac{1}{2},\chi_d\right)\neq 0 }}\mu(2d)^2R(8d)^2}}\leq \left(\frac{M_2(R,X)}{M_1(R,X)}\right)^2. 
\end{equation}
Combining \eqref{principal2} with \eqref{eq:absurd_bound} we see that if the conclusion of \cref{Bogomolov} were false we would have that
\begin{equation}
    \label{eq:absurd_bound2}
    \left(\frac{M_2(R,X)}{M_1(R,X)}\right)^2 > \exp\left(- \left(\frac{2}{\sqrt{5}}+o(1)\right)\sqrt{\frac{\log X}{\log\log X}}\right)
\end{equation}
for some $X > 0$ large.
This, however, contradicts the conclusions of \cref{FirstMoment,SecondMoment}, which imply that
\[
    \left( \frac{M_2(R,X)}{M_1(R,X)} \right)^2 \sim \frac{c_5^2}{c_4}(\log^4 X)\prod_{p \ \text{prime}}\left(\frac{1+f(p)^2-4\frac{f(p)}{\sqrt{p}}}{1+f(p)^2-2\frac{f(p)}{\sqrt{p}}}\right)^2
\]
for every $X > 0$ large, and for every $N \leq X^{\frac{1}{20} - \varepsilon}$ which is also large.
Indeed, notice that
\begin{equation} \label{eq:main_theorem_from_moments}
\begin{split}\raisetag{-35ex}
(\log^4 X)\prod_{p \ \text{prime}}\left(\frac{1+f(p)^2-4\frac{f(p)}{\sqrt{p}}}{1+f(p)^2-2\frac{f(p)}{\sqrt{p}}}\right)^2
&=(\log^4 X)\prod_{p \ \text{prime}} \left(1-\frac{2\frac{f(p)}{\sqrt{p}}}{1+f(p)^2-2\frac{f(p)}{\sqrt{p}}}\right)^2\\
&=\exp\left( 4 \log\log X + 2 \sum_{p \ \text{prime}} \log\left( 1 - \frac{2 \frac{f(p)}{\sqrt{p}}}{1+f(p)^2-2\frac{f(p)}{\sqrt{p}}} \right) \right) \\
&= \exp\left( 4 \log\log X - (4+o(1)) \sum_{p \ \text{prime}} \frac{\frac{f(p)}{\sqrt{p}}}{1+f(p)^2-2\frac{f(p)}{\sqrt{p}}} \right) \\
&=\exp\left(4 \log\log X -(4+o(1))\sum_{p \text{ prime}}\frac{f(p)}{\sqrt{p}}\right)\\
&=
\exp \left(4 \log\log X -(4+o(1))\sum_{\substack{p \ \text{prime} \\ L^2\leq p\leq \exp(\log^2 L)}}\frac{L}{p\log p}\right)\\
&=
\exp \left(4 \log\log X -(4+o(1))\sqrt{\log N\log\log N}\sum_{\substack{p \ \text{prime} \\ L^2\leq p\leq \exp(\log^2 L)}}\frac{1}{p\log p}\right)\\
&=\exp\left(4 \log\log X -(4+o(1))\sqrt{\frac{\log N}{\log\log N}}\right), 
\end{split}
\end{equation}
where the last equality follows from the prime number theorem.
Given that this computation will recur in several forthcoming results, we take this first occurrence as an opportunity to present it in full detail. More precisely, let $\pi(t) := \#\{ p \ \text{prime} \colon p \leq t \}$ be the prime counting function, let $\mathrm{Li}(t) := \int_2^{t} \frac{du}{\log u}$ be the offset logarithmic integral function, and let $R(t) := \pi(t) - \mathrm{Li}(t)$. Then, we have that
\begin{equation} \label{eq:RS_integrals}
    \begin{aligned}
    \sum_{\substack{p \ \text{prime} \\ L^2 \leq p \leq \exp(\log^2L)}} \frac{1}{p \log p} &= \int_{L^2}^{\exp(\log^2L)} \frac{d\pi(t)}{t \log t} = \int_{L^2}^{\exp(\log^2L)} \frac{d\mathrm{Li}(t)}{t \log t} + \int_{L^2}^{\exp(\log^2 L )} \frac{dR(t)}{t \log t} \\
    &= \int_{L^2}^{\exp(\log^2 L)} \frac{dt}{t \log^2 t} + \int_{L^2}^{\exp(\log^2L)} \frac{dR(t)}{t \log t}
    \end{aligned}
\end{equation}
where the integrals appearing in these formulas are taken in the sense of Riemann-Stieltjes. Now, an elementary calculation shows that
\begin{equation} \label{eq:PNT1_main}
    \int_{L^2}^{\exp(\log^2L)} \frac{dt}{t \log^2t} = \frac{1}{2 \log L} - \frac{1}{\log^2L} \ll \frac{1}{\log\log N},
\end{equation}
because $L = \sqrt{\log N \log\log N}$.
Moreover, if we apply integration by parts to the last integral appearing in \eqref{eq:RS_integrals} we see that
\begin{equation} \label{eq:RS_integrals_parts}
\int_{L^2}^{\exp(\log^2L)} \frac{dR(t)}{t \log t} = \frac{R(\exp(\log^2L))}{\exp(\log^2L) \log^2L} - \frac{R(L^2)}{2 L^2 \log L} + \int_{L^2}^{\exp(\log^2L)} R(t) d\left( -\frac{1}{t \log t} \right).
\end{equation}
Now, the prime number theorem asserts that $R(t) \ll_A \frac{t}{\log^A t}$ for every $A > 0$, as explained for example in \cite[Section~2.4]{Iwaniec_Kowalski_2004}. Using this upper bound, we see easily that
\begin{equation} \label{eq:PNT1_remainder1}
    \frac{R(\exp(\log^2L))}{\exp(\log^2L) \log^2L} - \frac{R(L^2)}{2 L^2 \log L} \ll \frac{1}{\log^2L} \ll \frac{1}{\log\log^2N} \ll \frac{1}{\log\log N}
\end{equation}
Finally, we can apply the prime number theorem again to see that
\begin{equation} \label{eq:PNT1_remainder2}
    \begin{aligned}
        \int_{L^2}^{\exp(\log^2L)} R(t) d\left( -\frac{1}{t \log t} \right) &= \int_{L^2}^{\exp(\log^2L)} \frac{R(t) (\log t+1)}{t^2 \log^2t} dt \\ &\ll \int_{L^2}^{\exp(\log^2L)} \frac{\log t+1}{t^2 \log^3t} dt \\
        &= \frac{1}{4 \log^2 L} + \frac{1}{2 \log L} - \frac{1}{2 \log^4L} - \frac{1}{\log^2L} \\
        &\ll \frac{1}{\log\log N}.
    \end{aligned}
\end{equation}
Plugging \eqref{eq:PNT1_remainder1} and \eqref{eq:PNT1_remainder2} into \eqref{eq:RS_integrals_parts}, we now see that
\begin{equation} \label{eq:PNT1_remainder}
    \int_{L^2}^{\exp(\log^2(L))} \frac{dR(t)}{t \log t} \ll \frac{1}{\log\log N}.        
\end{equation}
Moreover, plugging \eqref{eq:PNT1_remainder} and \eqref{eq:PNT1_main} into \eqref{eq:RS_integrals} we finally see that
\[
    \sum_{\substack{p \ \text{prime} \\ L^2 \leq p \leq \exp(\log^2(L))}} \frac{1}{p \log p} \ll \frac{1}{\log\log N},
\]
which implies the last equality portrayed in \eqref{eq:main_theorem_from_moments}.
Taking $N = X^{\frac{1}{20}-\varepsilon}$ we have
\begin{equation*}
\begin{aligned}
    \left(\frac{M_2(R,X)}{M_1(R,X)}\right)^2 &= \exp\left(4 \log\log X -(4+o(1))\sqrt{\frac{\log N}{\log\log N}}\right) \\ &= \exp\left(-(4+o(1))\sqrt{\frac{\log N}{\log\log N}}\right) \\ &=
\exp\left(-\left(
\frac{2}{\sqrt{5}}+o(1)\right)\sqrt{\frac{\log X}{\log\log X}}\right),
\end{aligned}
\end{equation*}
for every $X > 0$ large. This shows that \eqref{eq:absurd_bound2} cannot be true, and therefore concludes the proof of \cref{Bogomolov}.
\end{proof}

We use the rest of this section to prove \cref{SecondMoment}, following Soundararajan's methods from \cite{Soundararajan_2000} and \cite{Soundararajan_2008}.

\begin{proof}[Proof of \cref{SecondMoment}]
By \cite[Lemma 2.2]{Soundararajan_2000}, we have the following expression
\begin{equation}\label{eq1}
L\left(\frac{1}{2},\chi_{8d}\right)^2=2\sum_{n=1}^{\infty}\frac{\tau(n)}{\sqrt{n}} \cdot \chi_{8d}(n) \cdot W_2\left(\frac{n \pi}{8 d}\right),
\end{equation}
where $\tau$ denotes the number of divisors function, defined in \eqref{eq:tau}, while $W_2$ denotes the function introduced in \eqref{eq:W2}.
Substituting the explicit expression for $L\left(\frac{1}{2},\chi_{8d}\right)^2$ given by \eqref{eq1} and the explicit definition of $R(8d)$ given in \eqref{resonator} inside the definition of $M_2(R,X)$ given in \eqref{MRX} we see that
\[
M_2(R,X)=2\sum_{n_1,n_2\leq N}r(n_1)r(n_2)\sum_{n=1}^{\infty}\frac{\tau(n)}{\sqrt{n}}\sum_{\frac{X}{16}\leq d\leq \frac{X}{8}}\mu(2d)^2\chi_{8d}(nn_1n_2)W_2\left(\frac{n\pi}{8d}\right).
\]
Recall that $W_2$ is smooth and satisfies $W_2(\xi) = 1 + O(\xi^{\frac{1}{2}-\varepsilon})$ for small $\xi$, and $W_2(\xi) \ll \exp(-\xi)$ for large $\xi$, as proven in \cite[Lemma 2.1]{Soundararajan_2000}.
This implies that
\begin{equation}\label{equ2}
\begin{aligned}
M_2(R,X)&=\frac{X}{8\zeta(2)}\sum_{n_1,n_2\leq N}r(n_1)r(n_2)\sum_{\substack{n\\nn_1n_2=\text{odd square}}}\frac{\tau(n)}{\sqrt{n}}\prod_{\substack{p \ \text{prime} \\ p\mid 2nn_1n_2}}\left(\frac{p}{p+1}\right)\int_1^2 W_2\left(\frac{2n\pi}{Xt}\right)dt \\ &+ O\left(X^{\frac{7}{8}+\varepsilon}N^{\frac{1}{2}} \left(\sum_{n\leq N}|r(n)|\right)^2\right),
\end{aligned}
\end{equation}
as one can see by an application of the Pólya-Vinogradov inequality \cite[Theorem~12.5]{Iwaniec_Kowalski_2004}, analogous to the proof of \cite[Lemma 3.2]{Soundararajan_2008}.

For $N\leq X^{\frac{1}{20}-\varepsilon}$ we clearly have that $O\left(X^{\frac{7}{8}+\varepsilon}N^{\frac{1}{2}} \left(\sum_{n\leq N}|r(n)|\right)^2\right) = O(X^{1-\varepsilon})$ and therefore \eqref{equ2} implies that the main term of $M_2(R,X)$ is
\begin{equation}\label{equ3}
\frac{X}{12\zeta(2)}\sum_{\substack{a,r,s\\ar,as\leq N\\(a,r)=(a,s)=(r,s)=1}}
\mu(a)^2f(a)^2\frac{\mu(r)f(r)\mu(s)f(s)}{\sqrt{rs}} \sum_{m \text{ odd}}\frac{\tau(rsm^2)}{m}\prod_{\substack{p \ \text{prime} \\ p\mid arsm}} \left(\frac{p}{p+1}\right)\int_1^2 W_2\left(\frac{2rsm^2\pi}{Xt}\right)dt,
\end{equation}
which is obtained from \eqref{equ2} by replacing the three variables $(n_1,n_2,n)$ with the variables $(a,r,s,m)$, where we set $a = \mathrm{gcd}(n_1,n_2)$ and $r = n_1/a$, while $s = n_2/a$ and $m$ is the unique odd natural number such that $n_1 n_2 n = a^2 r^2 s^2 m^2$. Note that such an $m$ exists because $a,r$, and $s$ are coprime, and $n_1 n_2 n$ must be an odd square.

Now, let us estimate the sum over $m$ appearing in \eqref{equ3}. Using the definition of $W_2$ appearing in \eqref{eq:W2} we write
\begin{equation}\label{equint}
\begin{aligned}
    &\sum_{m \text{ odd}}\frac{\tau(rsm^2)}{m}\prod_{\substack{p \ \text{prime} \\ p\mid arsm}}\left(\frac{p}{p+1}\right)\int_1^2 W_2\left(\frac{2rsm^2\pi}{Xt}\right)dt \\ &=
\frac{1}{2\pi i}\int_{(c)}\left(\frac{\Gamma(\frac{w}{2}+\frac{1}{4})}{\Gamma(\frac{1}{4})}\right)^2\left(\frac{X}{2rs\pi}\right)^w\left(\int_{1}^{2}t^wdt\right) \left( \sum_{m\text{ odd}}\frac{\tau(rsm^2)}{m^{1+2w}}\prod_{\substack{p \ \text{prime} \\ p\mid arsm}}\left(\frac{p}{p+1}\right) \right) \frac{dw}{w},
\end{aligned}
\end{equation}
where $(c)$ is the vertical line going from $c-i\infty$ to $c+i\infty$.
Now, recall from \cite[Lemma 5.1]{Soundararajan_2000} that
\begin{equation}\label{Sound Lemma 5.1}
\sum_{m \text{ odd}} \frac{\tau(r s m^2)}{m^{1+2w}} \prod_{\substack{p \ \text{prime} \\p \mid a r s m}} \left(\frac{p}{p+1}\right) = \tau(rs) \zeta(1 + 2 w)^3 \eta(1+2w,a^2 r s),
\end{equation}
where $\eta$ is the function defined in \eqref{eq:eta}.
Therefore, we see that
\begin{equation}\label{equint2}
\begin{aligned}
    &\int_{(c)}\left(\frac{\Gamma(\frac{w}{2}+\frac{1}{4})}{\Gamma(\frac{1}{4})}\right)^2\left(\frac{X}{2rs\pi}\right)^w\left(\int_{1}^{2}t^wdt\right) \left( \sum_{m\text{ odd}}\frac{\tau(rsm^2)}{m^{1+2w}}\prod_{\substack{p \ \text{prime} \\ p\mid arsm}}\left(\frac{p}{p+1}\right) \right) \frac{dw}{w} \\
    = &\int_{(c)}\left(\frac{\Gamma(\frac{w}{2}+\frac{1}{4})}{\Gamma(\frac{1}{4})}\right)^2\left(\frac{X}{2rs\pi}\right)^w\left(\int_{1}^{2}t^wdt\right)\tau(rs)\zeta(1+2w)^3\eta(1+2w,a^2rs)\frac{dw}{w}.
\end{aligned}
\end{equation}
We can now deform the integration contour $(c)$ until we reach the pole $w = 0$ of the integrand, as done, for example, on \cite[Page 465]{Soundararajan_2000}. Doing so allows us to see that
\begin{equation} \label{eq:residue}
\begin{aligned}
    \frac{1}{2 \pi i} &\int_{(c)}\left(\frac{\Gamma(\frac{w}{2}+\frac{1}{4})}{\Gamma(\frac{1}{4})}\right)^2\left(\frac{X}{2rs\pi}\right)^w\left(\int_{1}^{2}t^wdt\right)\tau(rs)\zeta(1+2w)^3\eta(1+2w,a^2rs)\frac{dw}{w} \\ =
\tau(rs) &\Res_{w=0} \left( \left(\frac{\Gamma(\frac{w}{2}+\frac{1}{4})}{\Gamma(\frac{1}{4})}\right)^2\left(\frac{X}{2rs\pi}\right)^w\left(\int_{1}^{2}t^wdt\right)\frac{\zeta(1+2w)^3}{w}\eta(1+2w;a^2rs) \right) + O((rs)^{\frac{1}{4}}X^{-\frac{1}{4}+\varepsilon}).
\end{aligned}
\end{equation}
Let us now compute the residue appearing on the right-hand side of \eqref{eq:residue} in the following lemma.
\begin{lemma}\label{residues}
    Let $N$ and $X$ be large. Then
\begin{equation}
\label{eq:residue_0}
\begin{aligned}
&\Res_{w=0}\left( \left(\frac{\Gamma(\frac{w}{2}+\frac{1}{4})}{\Gamma(\frac{1}{4})}\right)^2\left(\frac{X}{2rs\pi}\right)^w\left(\int_{1}^{2}t^wdt\right)\frac{\zeta(1+2w)^3}{w}\eta(1+2w;a^2rs)\right)
\\ = &\frac{c_6}{48} \frac{rs}{\sigma(rs)h(ars)}\left( \left(\log\left(\frac{X}{rs}\right) \right)^3 +\CalQ(X,rsa^2)\right),
\end{aligned}
\end{equation}
where $\sigma$ is defined as in \eqref{eq:sigma}, $h$ is defined as in \eqref{eq:h}, while $c_6$ is defined as in \eqref{eq:c6} and $\mathcal{Q}(X,rsa^2)$ is defined as
\begin{equation*}
\CalQ(X,rsa^2)= Q_0\left(\log\left(\frac{X}{rs}\right)\right)
+\frac{\eta'}{\eta}
(1;a^2rs)Q_1\left(\log\left(\frac{X}{rs}\right)\right)
+ \frac{\eta''}{\eta}(1;a^2rs)Q_2\left(\log\left(\frac{X}{rs}\right)\right)+Q_3\frac{\eta'''}{\eta}(1;a^2rs),
\end{equation*}
where $Q_3$ is an absolute constant, while $Q_0$, $Q_1$, and $Q_2$ are absolute polynomials of degrees $2$, $2$, and $1$ respectively.
\end{lemma}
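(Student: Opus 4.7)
The plan is to compute the residue by Laurent/Taylor expansion at $w=0$. Since $\zeta(1+2w)$ has a simple pole at $w=0$, the cube $\zeta(1+2w)^3$ has a triple pole and the integrand has a pole of order $4$ there. Setting
\begin{equation*}
E(w) := \bigl(2w\zeta(1+2w)\bigr)^3 \cdot \left(\frac{\Gamma\bigl(\frac{w}{2}+\frac{1}{4}\bigr)}{\Gamma\bigl(\frac{1}{4}\bigr)}\right)^2 \cdot \left(\frac{X}{2rs\pi}\right)^w \cdot \int_1^2 t^w \, dt \cdot \eta(1+2w;a^2rs),
\end{equation*}
I observe that $E$ is analytic at $w=0$ with $E(0)=\eta(1;a^2rs)$, the integrand equals $E(w)/(8w^4)$, and so the residue is $\frac{1}{8}$ times the coefficient of $w^3$ in the Taylor expansion of $E$ at $0$.

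First, I would expand each of the five analytic factors of $E$ as a Taylor series at $w=0$ up to order $w^3$. Using the classical Laurent expansion $\zeta(1+s)=s^{-1}+\sum_{n\geq 0}(-1)^n\gamma_n s^n/n!$ in terms of the Stieltjes constants \eqref{eq:Stieltjes}, one obtains $(2w\zeta(1+2w))^3 = 1 + p_1 w + p_2 w^2 + p_3 w^3 + O(w^4)$ with absolute constants $p_j$. The exponential $(X/(2rs\pi))^w = \sum_{k\geq 0}w^k \log^k(X/(2rs\pi))/k!$ is the only source of powers of $L := \log(X/rs)$, through the identity $\log(X/(2rs\pi)) = L - \log(2\pi)$. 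The factors $\int_1^2 t^w\,dt$ and $(\Gamma(w/2+1/4)/\Gamma(1/4))^2$ are analytic at $w=0$ with value $1$, so their Taylor coefficients up to order $3$ are absolute constants. Finally,
\begin{equation*}
    \eta(1+2w;a^2rs) = \eta(1;a^2rs)\!\left[1 + 2w\frac{\eta'}{\eta}(1;a^2rs) + 2w^2\frac{\eta''}{\eta}(1;a^2rs) + \frac{4w^3}{3}\frac{\eta'''}{\eta}(1;a^2rs) + O(w^4)\right],
\end{equation*}
so all the dependence of $E$ on the number $a^2rs$ is carried linearly by $\eta(1;a^2rs)$ and by the three logarithmic derivatives $(\eta^{(k)}/\eta)(1;a^2rs)$ for $k=1,2,3$.

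Multiplying these expansions and collecting the coefficient of $w^3$, I would sort the result according to which derivative of $\eta$ appears. The part proportional to $\eta(1;a^2rs)$ (i.e.\ $k=0$) consists of the main term $L^3/6$ (from pairing the $L^3/6$ coefficient of the exponential with the $w^0$ of every other factor, after expanding $\log(X/(2rs\pi))=L-\log(2\pi)$) plus an absolute polynomial of degree at most $2$ in $L$, which together with the overall $\frac{1}{8}$ yields the terms $\frac{1}{48}L^3 + \frac{1}{48}Q_0(L)$. The parts proportional to $(\eta^{(k)}/\eta)(1;a^2rs)$ for $k=1,2,3$ must draw the remaining $w^{3-k}$ from the other four analytic factors, producing absolute polynomials of degrees $2,1,0$ respectively in $L$, yielding the terms with $Q_1(L), Q_2(L), Q_3$. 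Factoring out $\eta(1;a^2rs)/48$ then gives the expression $\frac{\eta(1;a^2rs)}{48}(L^3 + \CalQ(X,rsa^2))$, matching the claim up to the identification of $\eta(1;a^2rs)$.

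The remaining step is to verify that $\eta(1;a^2rs) = c_6 \cdot rs/(\sigma(rs)h(ars))$. Since $r(l)=\mu(l)f(l)$ is supported on squarefree integers, the variables $n_1,n_2\leq N$ in $M_2(R,X)$ are squarefree, which forces $a=\gcd(n_1,n_2)$, $r=n_1/a$ and $s=n_2/a$ to be pairwise coprime and squarefree; moreover, the constraint that $nn_1n_2$ is an odd square forces $a,r,s$ to be odd. Thus in the decomposition $a^2rs = l_1 l_2^2$ with $l_1$ squarefree and coprime to $l_2$, we have $l_1 = rs$ and $l_2 = a$. Using \eqref{eq:eta_p} at $\alpha=1$ one computes $\eta_p(1;1) = (1-1/p)h(p)$ for each odd prime $p$, hence $\eta(1;1)=c_6$ by \eqref{eq:c6}, and
\begin{equation*}
\frac{\eta(1;a^2rs)}{\eta(1;1)} = \prod_{p\mid rs}\frac{p/(p+1)}{h(p)}\cdot\prod_{p\mid a}\frac{1}{h(p)} = \frac{rs}{\sigma(rs)\,h(ars)},
\end{equation*}
using $\sigma(rs)=\prod_{p\mid rs}(p+1)$ for squarefree $rs$ and $h(ars)=\prod_{p\mid ars}h(p)$ by coprimality. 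The main obstacle would be the combinatorial bookkeeping of the absolute constants that make up $Q_0,Q_1,Q_2,Q_3$, but since the statement only requires their existence and the specified degree bounds, and since the leading $L^3$ coefficient comes purely from the exponential factor, the explicit values of these constants need not be tracked.
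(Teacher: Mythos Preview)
Your proof is correct and follows essentially the same approach as the paper: both expand each analytic factor (the normalized $\zeta$-cube, the Gamma ratio, the exponential $(X/2rs\pi)^w$, the integral $\int_1^2 t^w\,dt$, and $\eta(1+2w;a^2rs)$) to order $w^3$, extract the residue, and then identify $\eta(1;a^2rs)=c_6\,rs/(\sigma(rs)h(ars))$ via the prime-by-prime formulas \eqref{eq:eta_p}. Your packaging via $E(w)$ and the observation that the residue is $\tfrac{1}{8}$ times the $w^3$-coefficient of $E$ is a slightly cleaner bookkeeping device, but the underlying computation is identical.
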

\begin{proof}
    We will proceed by computing the Laurent series expansions at $w = 0$ of each of the functions appearing on the left-hand side of \eqref{eq:residue_0}, following \cite[Section~5.1]{Soundararajan_2000}.
    First of all, it is easy to see that
    \[
        \begin{aligned}
            \left( \frac{X}{2 r s \pi} \right)^w &= \exp\left( w \log\left( \frac{X}{2 r s \pi} \right) \right) = \sum_{j=0}^{+\infty} \frac{1}{j!} \left( \log\left( \frac{X}{2 r s \pi} \right) \right)^j w^j \\
            \int_1^2 t^w dt &= \frac{2^{w+1}-1}{w+1} = \sum_{j=0}^{+\infty} (-1)^j \left( 1 + 2 \sum_{k=1}^j \frac{(-1)^k}{k!} (\log 2)^k \right) w^j \\
            \Gamma\left( \frac{w}{2} + \frac{1}{4} \right) &= \Gamma\left( \frac{1}{4}\right) + \sum_{j=1}^{+\infty} \frac{\Gamma^{(j)}\left( \frac{1}{4} \right)}{j! \cdot 2^j} w^j \\
            \eta(1+2w;a^2rs) &= \sum_{j=0}^{+\infty} \frac{2^j}{j!} \eta^{(j)}(1;a^2rs) w^j = \eta(1;a^2rs) \left(1 + \sum_{j=1}^{+\infty} \frac{2^j}{j!} \frac{\eta^{(j)}}{\eta}(1;a^2rs) w^j \right)
        \end{aligned}
    \]
    and it is known since the work of Briggs and Chowla \cite{Briggs_Chowla} that
    \[
        \zeta(1+2w) = \frac{1}{2 w} + \sum_{j=0}^{+\infty} (-2)^j \frac{\gamma_j}{j!} w^j
    \]
    where the coefficients $\{\gamma_j\}_{j=0}^{+\infty}$ are the Stieltjes constants, defined in \eqref{eq:Stieltjes}.

It follows that the residue appearing in \eqref{eq:residue_0} may be written as
\begin{align*}
\frac{\eta(1;a^2rs)}{48}\left(
\log^3\left(\frac{X}{2rs\pi}\right)+P_0\left(\log\left(\frac{X}{2rs\pi}\right)\right)\right.
+&\frac{\eta'}{\eta}
(1;a^2rs)P_1\left(\log\left(\frac{X}{2rs\pi}\right)\right)
\\
+&\left.\frac{\eta''}{\eta}(1;a^2rs)P_2\left(\log\left(\frac{X}{2rs\pi}\right)\right)+P_3\frac{\eta'''}{\eta}(1;a^2rs)\right),
\end{align*}
where $P_3$ is an absolute constant, and $P_0$, $P_1$, and $P_2$ are polynomials of degrees $2$, $2$, and $1$ respectively.
To conclude, it suffices to observe that 
\begin{align*}
    \eta(1;a^2rs) &= 2^{-3} \cdot \left( \prod_{\substack{p \ \text{prime} \\ p \geq 3 \\ p \mid rs}} \frac{p-1}{p + 1} \right) \cdot \left( \prod_{\substack{p \ \text{prime} \\ p \geq 3 \\ p \mid a}} \frac{p-1}{p} \right) \cdot \left( \prod_{\substack{p \ \text{prime} \\ p \geq 3 \\ p \nmid ars}} \frac{(p-1)(p^3+2p^2-2p+1)}{p^3(p+1)} \right) \\
    &= c_6 \cdot \left( \prod_{\substack{p \ \text{prime} \\ p \geq 3 \\ p \mid rs}} \frac{p^3}{p^3 + 2 p^2 - 2p + 1} \right) \cdot \left( \prod_{\substack{p \ \text{prime} \\ p \geq 3 \\ p \mid a}} \frac{p^2 (p+1)}{p^3+2p^2-2p+1} \right) \\ 
    &= c_6 \cdot rs \cdot \left( \prod_{\substack{p \ \text{prime} \\ p \geq 3 \\ p \mid rs}} \frac{1}{p+1} \right) \cdot \left( \prod_{\substack{p \ \text{prime} \\ p \geq 3 \\ p \mid ars}} \frac{p^2 (p+1)}{p^3 + 2p^2 - 2p + 1} \right)
    \\ &= c_6 \frac{rs}{\sigma(rs)h(ars)},
\end{align*}
and to use the identity $\log(\frac{X}{2rs\pi})=\log(\frac{X}{rs})-\log(2\pi)$ in order to obtain polynomials $Q_0,\dots,Q_3$ evaluated at $\log(\frac{X}{rs})$ rather than polynomials $P_0,\dots,P_3$ evaluated at $\log(\frac{X}{2rs\pi})$.
\end{proof}

Applying \cref{residues} to evaluate the residue appearing in \eqref{eq:residue}, we can evaluate asymptotically the sum appearing in \eqref{equ3}. More precisely, we have that
\begin{equation*}
\begin{aligned}
    \frac{X}{12\zeta(2)} &\sum_{\substack{a,r,s\\ar,as\leq N\\(a,r)=(a,s)=(r,s)=1}}
\mu(a)^2f(a)^2\frac{\mu(r)f(r)\mu(s)f(s)}{\sqrt{rs}} \sum_{m \text{ odd}}\frac{\tau(rsm^2)}{m}\prod_{\substack{p \ \text{prime} \\ p\mid arsm}}\left(\frac{p}{p+1}\right)\int_1^2 W_2\left(\frac{2rsm^2\pi}{Xt}\right)dt \\
= \frac{c_6X}{576\zeta(2)}&\sum_{\substack{a,r,s\\ar,as\leq N\\(a,r)=(a,s)=(r,s)=1}}
\mu(a)^2f(a)^2\frac{\mu(r)f(r)\mu(s)f(s)}{\sqrt{rs}}\frac{\tau(rs)rs}{\sigma(rs)h(ars)}\left(\log^3\left(\frac{X}{rs}\right)+\CalQ(X,rsa^2)\right) + O((rs)^{\frac{1}{4}}X^{-\frac{1}{4}+\varepsilon}).
\end{aligned}
\end{equation*}

We conclude that for $N\leq X^{\frac{1}{20}+\varepsilon}$ we have that
\begin{equation} \label{sum_truncated}
\begin{aligned}
    M_2(R,X) = &c_5 X\sum_{\substack{a,r,s\\ar,as\leq N\\(a,r)=(a,s)=(r,s)=1}}
\frac{\mu(a)^2f(a)^2}{h(a)}\frac{\mu(r)f(r)\tau(r)\sqrt{r}}{\sigma(r)h(r)}\frac{\mu(s)f(s)\tau(s)\sqrt{s}}{\sigma(s)h(s)} \left(\log^3\left(\frac{X}{rs}\right)+\CalQ(X,rsa^2)\right) \\ &+O(X^{1-\varepsilon}),
\end{aligned}
\end{equation}
where $c_5$ is the constant defined in \eqref{eq:c5}.

We now enter the final steps of the proof. In Step 0, we extend the summation using Rankin's trick. In Step 1, we obtain the asymptotic behavior of the sum involving $\log^3(\frac{X}{rs})$: this will be the main term. In Step 2, we show that the asymptotic behavior of the extended sum involving $\CalQ(X,rsa^2)$ is an error term. In Step 3, we show that the tails, coming from the extension in Step 0, are smaller that the error term. This leads to the conclusion in the Final Step.

\textbf{Step 0: extending the sums.} 
We extend the summations in \eqref{sum_truncated} over $a$, $r$, and $s$ to run over all integers. At the end of this proof, we will estimate the tails. This will help us find a cleaner expression in terms of infinite products. In other words, we consider the following extended summation:
\begin{equation} \label{eq:extended_sum}
\sum_{\substack{a,r,s \\
(a,r)=(a,s)=(r,s)=1}}
\frac{\mu(a)^2f(a)^2}{h(a)}\frac{\mu(r)f(r)\tau(r)\sqrt{r}}{\sigma(r)h(r)}\frac{\mu(s)f(s)\tau(s)\sqrt{s}}{\sigma(s)h(s)} \left(\log^3\left(\frac{X}{rs}\right)+\CalQ(X,rsa^2)\right)
\end{equation}
and we will first focus on the first term
\begin{equation}\label{last_sum}
\sum_{\substack{a,r,s\\(a,r)=(a,s)=(r,s)=1}}
\frac{\mu(a)^2f(a)^2}{h(a)}\frac{\mu(r)f(r)\tau(r)\sqrt{r}}{\sigma(r)h(r)}\frac{\mu(s)f(s)\tau(s)\sqrt{s}}{\sigma(s)h(s)}\left(\log X-\log(rs)\right)^3,
\end{equation}
which will be the main term, as we will show in the following step.

\textbf{Step 1: the main term.}
We want to determine the asymptotic behavior of \eqref{last_sum}. 
To do so, we first of all notice that
\[
\begin{aligned}
    \sum_{\substack{a,r,s\\(a,r)=(a,s)=(r,s)=1}}
\frac{\mu(a)^2f(a)^2}{h(a)}\frac{\mu(r)f(r)\tau(r)\sqrt{r}}{\sigma(r)h(r)}\frac{\mu(s)f(s)\tau(s)\sqrt{s}}{\sigma(s)h(s)} &= \sum_{m=1}^{+\infty} \frac{\mu(m) f(m) \tau(m)^2 \sqrt{m}}{\sigma(m) h(m)} \left( \sum_{a \mid m} \frac{\sigma(a) \mu(a) f(a)}{\tau(a)^2 \sqrt{a}} \right) \\
&=
\prod_{p\text{ prime}}F(p)
\end{aligned}
\]
where $F$ is the multiplicative function defined in \eqref{eq:F}.
More generally, for every $t \in \mathbb{N}$ we have that
\begin{equation} \label{eq:multiplicative_1}
\sum_{\substack{a,r,s\\(a,r)=(a,s)=(r,s)=1}}
\frac{\mu(a)^2f(a)^2}{h(a)}\frac{\mu(r)f(r)\tau(r)\sqrt{r}}{\sigma(r)h(r)}\frac{\mu(s)f(s)\tau(s)\sqrt{s}}{\sigma(s)h(s)}\log^t(rs) = \left(\sum_{\ell_{1},\dots,\ell_{t}\text{ primes}}H(\ell_{1}\cdots\ell_{t})\right) \prod_{p\text{ prime}}F(p),
\end{equation}
where $H$ is the multiplicative function defined in \eqref{eq:H}.
To see why \eqref{eq:multiplicative_1} holds true, we start by factoring the square-free number $rs$ into a product of primes, which shows that
\begin{equation} \label{eq:multiplicative_2}
    \begin{aligned} &\sum_{\substack{a,r,s\\(a,r)=(a,s)=(r,s)=1}}
\frac{\mu(a)^2f(a)^2}{h(a)}\frac{\mu(r)f(r)\tau(r)\sqrt{r}}{\sigma(r)h(r)}\frac{\mu(s)f(s)\tau(s)\sqrt{s}}{\sigma(s)h(s)}\log^t(rs) \\ = &\sum_{\ell_{1},\dots,\ell_{t}\text{ primes}}  \left( \sum_{\substack{a,r,s \\ \substack{(a,r)=(a,s)=(r,s)=1}\\
\ell_i\mid rs}}
\frac{\mu(a)^2f(a)^2}{h(a)}\frac{\mu(r)f(r)\tau(r)\sqrt{r}}{\sigma(r)h(r)}\frac{\mu(s)f(s)\tau(s)\sqrt{s}}{\sigma(s)h(s)} \right)\prod_{i=1}^t \log\ell_{i}
\end{aligned}
\end{equation}
for every $t \in \mathbb{N}$. Then, we observe that
\begin{equation} \label{eq:multiplicative_3}
\sum_{\substack{a,r,s \\ \substack{(a,r)=(a,s)=(r,s)=1}\\
\ell_i\mid rs}}
\frac{\mu(a)^2f(a)^2}{h(a)}\frac{\mu(r)f(r)\tau(r)\sqrt{r}}{\sigma(r)h(r)}\frac{\mu(s)f(s)\tau(s)\sqrt{s}}{\sigma(s)h(s)}
= 
\prod_{\substack{p\text{ prime}\\p\nmid \ell_{1}\cdots\ell_{t} }}F(p) \prod_{\substack{p\text{ prime}\\p\mid \ell_{1}\cdots\ell_{t} }}\left(-2\frac{\tau(p) f(p) \sqrt{p}}{h(p)\sigma(p)}\right)
\end{equation}
as follows from the definition of $F$.
Finally, we use the definition of $H$ to write
\begin{align*}
&\sum_{\ell_{1},\dots,\ell_{t}\text{ primes}} \left(\prod_{\substack{p\text{ prime}\\p\nmid \ell_{1}\cdots\ell_{t} }}F(p) \prod_{\substack{p\text{ prime}\\p\mid \ell_{1}\cdots\ell_{t} }}\left(-2\frac{\tau(p) f(p) \sqrt{p}}{h(p)\sigma(p)}\right)\right) \prod_{i=1}^t \log\ell_{i} \\ = 
&\sum_{\ell_{1},\dots,\ell_{t}\text{ primes}} \left(\prod_{\substack{p\text{ prime}\\p\nmid \ell_{1}\cdots\ell_{t} }} F(p) \right) \cdot \left(  \prod_{\substack{p\text{ prime}\\p\mid \ell_{1}\cdots\ell_{t} }}\left(-2\frac{\tau(p) f(p) \sqrt{p}}{h(p)\sigma(p)}\right)\right)
\left(\prod_{i=1}^t \log\ell_{i}\right) \\
= &\sum_{\ell_{1},\dots,\ell_{t}\text{ primes}}\left(\prod_{\substack{p\text{ prime}\\p\nmid \ell_{1}\cdots\ell_{t} }}F(p)\right)
\left(H(\ell_{1}\cdots\ell_{t})\prod_{\substack{p\text{ prime}\\p\mid \ell_{1}\cdots\ell_{t} }}F(p)\right) \\ =
&\left(\sum_{\ell_{1},\dots,\ell_{t}\text{ primes}}H(\ell_{1}\cdots\ell_{t})\right) \prod_{p\text{ prime}}F(p)
\end{align*}
and we observe that these equalities, together with \eqref{eq:multiplicative_2} and \eqref{eq:multiplicative_3}, imply \eqref{eq:multiplicative_1}.

These formulas allow us to rewrite the sum \eqref{last_sum} in multiplicative form. More precisely, we have that
\begin{equation}\label{productexpression}
\begin{aligned}
&\sum_{\substack{a,r,s\\(a,r)=(a,s)=(r,s)=1}}
\frac{\mu(a)^2f(a)^2}{h(a)}\frac{\mu(r)f(r)\tau(r)\sqrt{r}}{\sigma(r)h(r)}\frac{\mu(s)f(s)\tau(s)\sqrt{s}}{\sigma(s)h(s)}\left(\log X-\log(rs)\right)^3 = \\ &\left(\log^3X-3\log^2 X\sum_{\ell\text{ prime}}H(\ell)+3\log X\sum_{\ell_{1},\ell_{2}\text{ primes}}H(\ell_{1}\ell_{2})-\sum_{\ell_{1},\ell_{2},\ell_{3}\text{ primes}}H(\ell_{1}\ell_{2}\ell_{3})\right)\prod_{p\ \text{prime}}F(p).
\end{aligned}
\end{equation}

We can finally use this multiplicative form to obtain the asymptotic behavior of \eqref{last_sum}. To do so, note that
\begin{equation}\label{estimates H}
\sum_{\ell_{1},\dots,\ell_{t}\text{ primes}}|H(\ell_{1}\cdots\ell_{t})|\ll \log^{\frac{t}{2}+\varepsilon}X,    
\end{equation}
for every $t \in \mathbb{N}$,
as follows from the prime number theorem and the definition of $H$ given in \eqref{eq:H}. In particular, recall that in our case $L = \sqrt{\log N \log\log N}$ and $N \leq X^{\frac{1}{20} - \varepsilon}$.
Finally, we can observe that 
\[
    \prod_{p \ \text{prime}} F(p) = \prod_{p \ \text{prime}} \left( 1+\frac{f(p)^2}{h(p)} - 4 \frac{f(p) \sqrt{p}}{h(p) (p+1)} \right) \sim \prod_{p \ \text{prime}} \left( 1 + f(p)^2 - 4 \frac{f(p)}{\sqrt{p}} \right),
\]
as follows from the definition of $F$, given in \eqref{eq:F}, and the definition of $h$, given in \eqref{eq:h}. These identities allow us to see that, when $X$ is large, we have that
\begin{equation}\label{eq:Asymptotic}
\begin{aligned}
\sum_{\substack{a,r,s\\(a,r)=(a,s)=(r,s)=1}}
\frac{\mu(a)^2f(a)^2}{h(a)}\frac{\mu(r)f(r)\tau(r)\sqrt{r}}{\sigma(r)h(r)}\frac{\mu(s)f(s)\tau(s)\sqrt{s}}{\sigma(s)h(s)} \log^3\left( \frac{X}{rs} \right) \sim (\log^3 X)\prod_{p \ \text{prime}}\left(1+f(p)^2-4\frac{f(p)}{\sqrt{p}}\right).
\end{aligned}
\end{equation}
We keep the estimate \eqref{eq:Asymptotic} in our records and move to the next step.

\textbf{Step 2: the error term.}
Let us now prove that the second term in the extended sum \eqref{eq:extended_sum}, which is given by
\begin{align}\label{Qsum}
\sum_{\substack{a,r,s \\ (a,r)=(a,s)=(r,s)=1}}
&\frac{\mu(a)^2f(a)^2}{h(a)}\frac{\mu(r)f(r)\tau(r)\sqrt{r}}{\sigma(r)h(r)}\frac{\mu(s)f(s)\tau(s)\sqrt{s}}{\sigma(s)h(s)} \CalQ(X,a^2rs) 
\end{align}
is little-$o$ of \eqref{eq:Asymptotic}.
To do so, let us recall from \cref{residues} that $\mathcal{Q}(X,a^2rs)$ is defined as
\begin{equation*}
\CalQ(X,a^2rs)= Q_0\left(\log\left(\frac{X}{rs}\right)\right)
+\frac{\eta'}{\eta}
(1;a^2rs)Q_1\left(\log\left(\frac{X}{rs}\right)\right)
+ \frac{\eta''}{\eta}(1;a^2rs)Q_2\left(\log\left(\frac{X}{rs}\right)\right)+Q_3\frac{\eta'''}{\eta}(1;a^2rs),
\end{equation*}
where $Q_3$ is an absolute constant, while $Q_0$, $Q_1$, and $Q_2$ are absolute polynomials of degrees $2$, $2$, and $1$ respectively.

The same argument used in Step 1 can be used to prove that there exists an absolute constant $c_7$ such that
\begin{equation} \label{eq:Q0_estimate}
    \begin{aligned}
        &\sum_{\substack{a,r,s\\(a,r)=(a,s)=(r,s)=1}}
\frac{\mu(a)^2f(a)^2}{h(a)}\frac{\mu(r)f(r)\tau(r)\sqrt{r}}{\sigma(r)h(r)}\frac{\mu(s)f(s)\tau(s)\sqrt{s}}{\sigma(s)h(s)}Q_0\left(\log\left(\frac{X}{rs}\right)\right) \\ \sim &c_7 (\log^2 X)\prod_{p \ \text{prime}}\left(1+f(p)^2-4\frac{f(p)}{\sqrt{p}}\right).
    \end{aligned}
\end{equation}
Then, to prove that \eqref{Qsum} is little-o of \eqref{eq:Asymptotic}, it is enough to show that the sum 
\begin{align}\label{Q1sum}
\sum_{\substack{a,r,s \\(a,r)=(a,s)=(r,s)=1}}
&\frac{\mu(a)^2f(a)^2}{h(a)}\frac{\mu(r)f(r)\tau(r)\sqrt{r}}{\sigma(r)h(r)}\frac{\mu(s)f(s)\tau(s)\sqrt{s}}{\sigma(s)h(s)}\CalQ_1(X,a^2rs) 
\end{align}
is little-$o$ of \eqref{eq:Asymptotic}, where $\CalQ_1(X,a^2rs)\coloneq \CalQ(X,a^2rs)-Q_0(\log(\frac{X}{rs}))$.

Notice that for every $\alpha\in\C$ with $\text{Re}(\alpha)>\frac{1}{2}$, and every $l \in \mathbb{N}$ we can rewrite $\eta(\alpha;l)$ as
\[
\eta(\alpha;l)=\eta(\alpha;1)\prod_{\substack{p \ \text{prime} \\ p\mid l}}\frac{\eta_p(\alpha;l)}{\eta_p(\alpha;1)} = \eta(\alpha;1) \prod_{\substack{p \ \text{prime} \\ p \mid l}} G_p(\alpha;l),
\]
where $G_p(\alpha;l)\coloneqq \frac{\eta_p(\alpha;l)}{\eta_p(\alpha;1)}$, as defined in \eqref{eq:Gp}. 
This allows us to rewrite $\CalQ_1(X,a^2rs)$ as follows:
\begin{equation}\label{Q from G}
\begin{split}
\mathcal{Q}_1(X,a^2rs) &= Q_1\left(\log\left(\frac{X}{rs}\right)\right) \cdot \frac{\eta'}{\eta}
(1;a^2rs)
+ Q_2\left(\log\left(\frac{X}{rs}\right)\right) \cdot \frac{\eta''}{\eta}(1;a^2rs) + Q_3 \cdot \frac{\eta'''}{\eta}(1;a^2rs) \\
&= Q_1\left(\log\left(\frac{X}{rs}\right)\right) \left(\frac{\eta'(1;1)}{\eta(1;1)} + \sum_{\substack{p \ \text{prime} \\ p\mid l}}\frac{G'_{p}(1;l)}{G_{p}(1;l)}\right) \\ &+ Q_2\left(\log\left(\frac{X}{rs}\right)\right)
\left( \frac{\eta''(1;1)}{\eta(1;1)} + 2 \frac{\eta'(1;1)}{\eta(1;1)} \sum_{\substack{p \ \text{prime} \\ p \mid l}} \frac{G_p'(1;l)}{G_p(1;l)} + \sum_{\substack{p,q \ \text{primes} \\ p,q\mid l\\p\neq q}} \frac{G'_{p}(1;l)}{G_{p}(1;l)}\frac{G'_{q}(1;l)}{G_{q}(1;l)}
+ \sum_{\substack{p \ \text{prime} \\ p\mid l}} \frac{G''_{p}(1;l)}{G_{p}(1;l)}\right)
\\
&+ Q_3 \cdot \left( \frac{\eta'''(1;1)}{\eta(1;1)} + 3 \frac{\eta''(1;1)}{\eta(1;1)} \sum_{\substack{p \ \text{prime} \\ p \mid \ell}} \frac{G_p'(1;l)}{G_p(1;l)} + 3 \frac{\eta'(1;1)}{\eta(1;1)} \left( \sum_{\substack{p,q \ \text{primes} \\ p,q\mid l\\p\neq q}} \frac{G'_{p}(1;l)}{G_{p}(1;l)}\frac{G'_{q}(1;l)}{G_{q}(1;l)}
+ \sum_{\substack{p \ \text{prime} \\ p\mid l}} \frac{G''_{p}(1;l)}{G_{p}(1;l)} \right) \right. 
\\ &+ \left. \sum_{\substack{\ell, p, q \ \text{primes} \\ \ell,p,q\mid l\\ \ell\neq p\neq q}} \frac{G'_{\ell}(1;l)}{G_{\ell}(1;l)}\frac{G'_{p}(1;l)}{G_{p}(1;l)}\frac{G'_{q}(1;l)}{G_{q}(1;l)}+
\sum_{\substack{p,q \ \text{primes} \\ p,q\mid l\\p\neq q}} \frac{G''_{p}(1;l)}{G_{p}(1;l)}\frac{G'_{q}(1;l)}{G_{q}(1;l)}
+ \sum_{\substack{p \ \text{prime} \\ p\mid l}} \frac{G'''_{p}(1;l)}{G_{p}(1;l)}\right)
\end{split}
\end{equation}
where $l=a^2rs$. 

In order to use this explicit expression to estimate the asymptotic behavior of \eqref{Q1sum}, we need to know the asymptotic behavior of 
\[
    \frac{G_p^{(i)}(1;a^2rs)}{G_p(1;a^2 r s)}
\]
whenever $p$ is a large prime and $i \in \{1,2,3\}$. To establish this behavior, we use the following explicit expressions
\begin{align}
\label{eq:G_logarithmic_derivative_1}\frac{G'_{p}(\alpha;l)}{G_{p}(\alpha;l)}&= \frac{\eta'_p(\alpha;l)}{\eta_p(\alpha;l)}-\frac{\eta'_p(\alpha;1)}{\eta_p(\alpha;1)},\\
\frac{G''_{p}(\alpha;l)}{G_{p}(\alpha;l)}&=\frac{\eta''_p(\alpha;l)}{\eta_p(\alpha;l)}-2\frac{\eta'_p(\alpha;l)}{\eta_p(\alpha;l)}\frac{\eta'_p(\alpha;1)}{\eta_p(\alpha;1)}+2\left(\frac{\eta'_p(\alpha;1)}{\eta_p(\alpha;1)}\right)^2-\frac{\eta''_p(\alpha;1)}{\eta_p(\alpha;1)}, \label{eq:G_logarithmic_derivative_2}\\
\label{eq:G_logarithmic_derivative_3} \frac{G'''_{p}(\alpha;l)}{G_{p}(\alpha;l)}&=\frac{\eta'''_p(\alpha;l)}{\eta_p(\alpha;l)} -3 \frac{\eta_p''(\alpha;l) \eta_p'(\alpha;1) + \eta_p'(\alpha;l) \eta_p''(\alpha;1)}{\eta_p(\alpha;l) \eta_p(\alpha;1)} + 6 \frac{\eta_p'(\alpha;l)}{\eta_p(\alpha;l)} \left( \frac{\eta_p'(\alpha;1)}{\eta_p(\alpha;1)} \right)^2 \\ \notag &- \frac{\eta_p'''(\alpha;1)}{\eta_p(\alpha;1)} - 6 \left( \frac{\eta_p'(\alpha;1)}{\eta_p(\alpha;1)} \right)^3 + 6 \frac{\eta_p'(\alpha;1) \cdot \eta_p''(\alpha;1)}{(\eta_p(\alpha;1))^2}
\end{align}
which follow from elementary calculus.
Now, we see from the definition of $\eta_p(\alpha;l)$, given in \eqref{eq:eta}, that
\[
    \eta_p^{(i)}(\alpha;l) = \begin{cases}
        (-1)^{i+1} \frac{\log^{i}p}{(p+1)p^\alpha} \left( 3 + 2^i (p-3) p^{-\alpha} + 3^i p^{-2\alpha} \right), \ \text{if} \ p \nmid l \\
        (-1)^{i+1} \left( \frac{p}{p+1} \right) \frac{\log^{i}p}{p^\alpha}, \ \text{if} \ p \mid l_1 \\
        (-1)^{i+1} 2^i \left( \frac{p}{p+1} \right) \frac{\log^{i}p}{p^{2 \alpha}}, \ \text{if} \ p \mid l_2
    \end{cases}
\]
for every $i \geq 1$ and every $l = l_1 l_2^2$, where $l_1$ is squarefree and coprime to $l_2$. Therefore, we have that
\[
\frac{\eta_p^{(i)}(1;l)}{\eta_p(1,l)} = \begin{cases}
    (-1)^{i+1} (2^i + 3) \frac{\log^ip}{p^2} \left( 1 + O(\frac{1}{p}) \right), \ &\text{if} \ p \nmid l \\
    (-1)^{i+1} \frac{\log^ip}{p} \left( 1 + O(\frac{1}{p}) \right), \ &\text{if} \ p \mid l_1 \\
    (-1)^{i+1} 2^i \frac{\log^ip}{p^2} \left( 1 + O(\frac{1}{p}) \right), \ &\text{if} \ p \mid l_2
\end{cases}
\]
as $p \to +\infty$. Using now the explicit formulas given in \eqref{eq:G_logarithmic_derivative_1}, \eqref{eq:G_logarithmic_derivative_2} and \eqref{eq:G_logarithmic_derivative_3}, we see that
\begin{equation} \label{eq:Gp_asymptotics_main}
    \frac{G_p^{(i)}(1;l)}{G_p(1;l)} = \begin{cases}
        c_2(i) \frac{\log^ip}{p} \mathcal{H}_{1,i}(p), \ &\text{if} \ p \mid l_1, \\
        c_3(i) \frac{\log^ip}{p^2} \mathcal{H}_{2,i}(p), \ &\text{if} \ p \mid l_2,
    \end{cases}
\end{equation}
for some constants $c_2(i)$ and $c_3(i)$, depending only on $i$, and some explicit functions $\mathcal{H}_{1,i}(p), \mathcal{H}_{2,i}(p)$ such that \[\mathcal{H}_{1,i}(p), \mathcal{H}_{2,i}(p) = 1 + O\left( \frac{1}{p} \right)\] 
as $p \to +\infty$. This explicit asymptotic behavior allows us to determine that the sum appearing in \eqref{Q1sum} is little-$o$ of \eqref{eq:Asymptotic}. More precisely, we have that
\begin{equation} \label{eq:Q1_estimate}
    \begin{aligned}
        &\phantom{=} \sum_{\substack{a,r,s \\(a,r)=(a,s)=(r,s)=1}}
\frac{\mu(a)^2f(a)^2}{h(a)}\frac{\mu(r)f(r)\tau(r)\sqrt{r}}{\sigma(r)h(r)}\frac{\mu(s)f(s)\tau(s)\sqrt{s}}{\sigma(s)h(s)}\CalQ_1(X,a^2rs) \\ &= 
O\left(\log^{2} X \prod_{p \ \text{prime}}\left(1+f(p)^2-4\frac{f(p)}{\sqrt{p}}\right)\right)
    \end{aligned}
\end{equation}
as $X \to +\infty$.
To obtain this, one can study separately each summand of \eqref{Q from G}, following the method used in Step 1.

For the sake of clarity, we outline the procedure only for the following sum 
\begin{align}\label{Q1sumand}
\sum_{\substack{a,r,s\\(a,r)=(a,s)=(r,s)=1}}
&\frac{\mu(a)^2f(a)^2}{h(a)}\frac{\mu(r)f(r)\tau(r)\sqrt{r}}{\sigma(r)h(r)}\frac{\mu(s)f(s)\tau(s)\sqrt{s}}{\sigma(s)h(s)}Q_1\left(\log\left(\frac{X}{rs}\right)\right)\left(\sum_{\substack{p \ \text{prime} \\ p\mid a^2rs}}\frac{G'_{p}(1;a^2rs)}{G_{p}(1;a^2rs)}\right).
\end{align}
First of all, applying \eqref{eq:Gp_asymptotics_main} with $l = a^2 r s$, and recalling that $Q_1$ is a polynomial of degree two we notice that \eqref{Q1sumand} is a linear combination of
\begin{equation}\label{Q1 error}
    \log^{j}X\sum_{\substack{a,r,s\\(a,r)=(a,s)=(r,s)=1}} \left( \begin{aligned}
&\frac{\mu(a)^2f(a)^2}{h(a)}\frac{\mu(r)f(r)\tau(r)\sqrt{r}}{\sigma(r)h(r)}\frac{\mu(s)f(s)\tau(s)\sqrt{s}}{\sigma(s)h(s)} \\ \cdot &\log^{i}(rs)\left(c_2(1) \sum_{\substack{p \ \text{prime} \\ p\mid rs}}\frac{\log p}{p}\CalH_{1,1}(p)+c_3(1)\sum_{\substack{p \ \text{prime} \\ p\mid a}}\frac{\log p}{p^2}\CalH_{2,1}(p)\right) \end{aligned} \right)
\end{equation}
for $0\leq j+i\leq 2$. By expanding $\log^{i}(rs)$ we see that
\begin{align*}
&\log^{i}(rs)\left(c_2(1) \sum_{\substack{p \ \text{prime} \\ p\mid rs}}\frac{\log p}{p}\CalH_{1,1}(p)+c_3(1) \sum_{\substack{p \ \text{prime} \\ p\mid a}}\frac{\log p}{p^2}\CalH_{2,1}(p)\right) \\ = \ &c_2(1) \sum_{\substack{p,q_1,\dots,q_i \ \text{primes} \\ p,q_1,\dots,q_i\mid rs}} \frac{\log p}{p} \left( \prod_{k=1}^i \log q_k \right) \CalH_{1,1}(p)+ c_3(1) \sum_{\substack{p,q_1,\dots,q_i \ \text{primes} \\ p\mid a, \ q_1,\dots,q_i\mid rs}}\frac{\log p}{p^2} \left( \prod_{k=1}^i \log q_k \right) \CalH_{2,1}(p).
\end{align*}
Analogously to the procedure in Step 1, we can use this expression to write \eqref{Q1 error} as an Euler product. More precisely, \eqref{Q1 error} equals to 
\begin{align}\label{product formula Q1}
\log^{j}X \left( \prod_{p\text{ prime}}F(p) \right) \left(c_2(1)\sum_{\ell_{1},\dots,\ell_{i+1}
\text{ primes}}M(\ell_{1}\cdots\ell_{i+1})+c_3(i)\sum_{\substack{q,\ell_{1},\dots,\ell_{i}
\text{ primes}\\q \nmid \ell_{1}\cdots\ell_{i}}}G(q)H(\ell_{1}\cdots\ell_{i})\right),
\end{align}
where $F,G,H$ are the multiplicative functions defined in \eqref{eq:F}, \eqref{eq:G} and \eqref{eq:H}, while $M$ is the function defined in \eqref{eq:M}.
Since $p\geq L^2$ we have that
\[
|M(\ell_{1}\cdots\ell_{i+1})|\ll \frac{1}{L^2}|H(\ell_{1}\cdots\ell_{i+1})|
\]
and consequently by \eqref{estimates H} we have that
\[
\left\lvert \sum_{\ell_{1},\dots,\ell_{i+1}
\text{ primes}}M(\ell_{1}\cdots\ell_{i+1})\right\rvert \leq \frac{1}{L^2} \sum_{\ell_{1},\dots,\ell_{i+1}
\text{ primes}} \lvert H(\ell_{1}\cdots\ell_{i+1}) \rvert \ll \frac{1}{L^2}\log^{\frac{i+1}{2}+\varepsilon}X.
\]
On the other hand, we have that
\[
\sum_{p\text{ prime}}G(p)\ll L^2 \sum_{\substack{p \ \text{prime} \\ L^ 2\leq p\leq \exp(\log^2(L))}} \frac{1}{p^3\log p}\leq \frac{1}{L^2},
\]
where the last inequality can be proven using the prime number theorem, as done for the last equality appearing in \eqref{eq:main_theorem_from_moments}.
Finally, we notice that
\[
\left\lvert \sum_{\substack{p,\ell_{1},\dots,\ell_{i}
\text{ primes} \\ p\nmid \ell_{1}\cdots\ell_{i}}}G(p)H(\ell_{1}\cdots\ell_{i})\right\rvert
\leq \left( \sum_{p \ \text{prime}} G(p) \right) \left( \sum_{\ell_{1},\dots,\ell_{i}
\text{ primes }} \lvert H(\ell_{1}\cdots\ell_{i}) \rvert \right) \leq
\frac{1}{L^2}\log^{\frac{i}{2}+\varepsilon}X,
\]
thanks to the Cauchy-Schwartz inequality and to the previous bounds.
In particular, \eqref{product formula Q1} and consequently \eqref{Q1sum} are
\[
O\left(\log^{\frac{5}{2}+\varepsilon} X\prod_{p \ \text{prime}}\left(1+f(p)^2-4\frac{f(p)}{\sqrt{p}}\right)\right),
\]
for every $\varepsilon>0$, which is little-o of \eqref{eq:Asymptotic} as desired.

\textbf{Step 3: the tails.} We will now justify that we may extend the summation range in the right-hand side of \eqref{sum_truncated}, which is inspired by Rankin's trick as in \cite[Page~472]{Soundararajan_2008}, guaranteeing that the estimates in Step 1 and Step 2 are useful.
To do so, we need to prove that
\begin{align}
    \sum_{\substack{a,r,s \\
(a,r)=(a,s)=(r,s)=1 \\ (ar > N) \vee (as > N)}}
\frac{\mu(a)^2f(a)^2}{h(a)}\frac{\mu(r)f(r)\tau(r)\sqrt{r}}{\sigma(r)h(r)}\frac{\mu(s)f(s)\tau(s)\sqrt{s}}{\sigma(s)h(s)} \log^3\left(\frac{X}{rs}\right) &= o\left( (\log^3 X)\prod_{p \ \text{prime}}\left(1+f(p)^2-4\frac{f(p)}{\sqrt{p}}\right) \right) \label{eq:tail_main} \\
\sum_{\substack{a,r,s \\
(a,r)=(a,s)=(r,s)=1 \\ (ar > N) \vee (as > N)}}
\frac{\mu(a)^2f(a)^2}{h(a)}\frac{\mu(r)f(r)\tau(r)\sqrt{r}}{\sigma(r)h(r)}\frac{\mu(s)f(s)\tau(s)\sqrt{s}}{\sigma(s)h(s)} \mathcal{Q}(X,rsa^2) &= o\left( (\log^3 X)\prod_{p \ \text{prime}}\left(1+f(p)^2-4\frac{f(p)}{\sqrt{p}}\right) \right) \label{eq:tail_error}
\end{align}
for $X \to +\infty$ and $N = X^{\frac{1}{20} - \varepsilon}$.

Let us start by proving \eqref{eq:tail_main}. To do so, let $\alpha := \frac{1}{\log^3(L)}$ and notice that 
\begin{equation} \label{eq:rankin_elementary_inequality}
    (\log X+\log(rs))^3\ll (\log^3X)N^{-\alpha}(ars)^{\alpha}    
\end{equation}
for every $a,r,s$ such that $ars > N$. Using this inequality, we can check that the absolute value of the left hand side of {\eqref{eq:tail_main}} is bounded from above by an explicit Euler product, which can then be compared to the right hand side of {\eqref{eq:tail_main}}. More precisely, we have that:
\begin{equation} \label{eq:rankin_first_1}
\begin{aligned}
&\phantom{\leq} \left\lvert \sum_{\substack{a,r,s \\
(a,r)=(a,s)=(r,s)=1 \\ (ar > N) \vee (as > N)}}
\frac{\mu(a)^2f(a)^2}{h(a)} \, \frac{\mu(r)f(r)\tau(r)\sqrt{r}}{\sigma(r)h(r)} \, \frac{\mu(s)f(s)\tau(s)\sqrt{s}}{\sigma(s)h(s)} \log^3\left(\frac{X}{rs}\right) \right\rvert \\
&\leq \sum_{\substack{a,r,s \\
(a,r)=(a,s)=(r,s)=1 \\ (ar > N) \vee (as > N)}} \frac{f(a)^2}{h(a)} \, \frac{f(r) \tau(r)}{\sigma(r) \frac{h(r)}{\sqrt{r}}} \, \frac{f(s) \tau(s)}{\sigma(s) \frac{h(s)}{\sqrt{s}}} \, (\log X + \log(rs))^3 \\
&\ll (\log^3X) N^{-\alpha} \sum_{\substack{a,r,s \\
(a,r)=(a,s)=(r,s)=1 \\ (ar > N) \vee (as > N)}} f(a)^2 a^\alpha \, \frac{f(r) \tau(r) f(s) \tau(s)}{\sqrt{rs}} \, (rs)^\alpha,
\end{aligned}
\end{equation}
where the last step uses \eqref{eq:rankin_elementary_inequality} together with the fact that $\sigma(r) \geq r$ and $h(r) \geq 1$ for every $r \in \mathbb{N}$.
Moreover, note that
\begin{equation} \label{eq:rankin_first_2}
    \begin{aligned}
        \sum_{\substack{a,r,s \\
(a,r)=(a,s)=(r,s)=1 \\ (ar > N) \vee (as > N)}} f(a)^2 a^\alpha \, \frac{f(r) \tau(r) f(s) \tau(s)}{\sqrt{rs}} \, (rs)^\alpha &\leq \sum_{\substack{a,r,s \\
(a,r)=(a,s)=(r,s)=1}} f(a)^2 a^\alpha \, \frac{f(r) \tau(r) f(s) \tau(s)}{\sqrt{rs}} \, (rs)^\alpha \\
&= \prod_{p \ \text{prime}}\left(1+f(p)^2p^{\alpha}-4f(p)p^{\alpha-\frac{1}{2}}\right).
    \end{aligned}
\end{equation}
Combining \eqref{eq:rankin_first_1} with \eqref{eq:rankin_first_2}, we see that
\begin{align*}
\frac{\displaystyle{\left\lvert \sum_{\substack{a,r,s \\
(a,r)=(a,s)=(r,s)=1 \\ (ar > N) \vee (as > N)}}
\frac{\mu(a)^2f(a)^2}{h(a)} \, \frac{\mu(r)f(r)\tau(r)\sqrt{r}}{\sigma(r)h(r)} \, \frac{\mu(s)f(s)\tau(s)\sqrt{s}}{\sigma(s)h(s)} \log^3\left(\frac{X}{rs}\right) \right\rvert}}{\displaystyle{(\log^3 X)\prod_{p \ \text{prime}}\left(1+f(p)^2-4\frac{f(p)}{\sqrt{p}}\right)}}
&\ll N^{-\alpha} \prod_{p \ \text{prime}} \left( \frac{1+f(p)^2p^{\alpha}-4f(p)p^{\alpha-\frac{1}{2}}}{1+f(p)^2-4 f(p) p^{-\frac{1}{2}}} \right)\\
&\ll \exp\left(-\alpha\frac{\log N}{(\log\log N)^2}\right),
\end{align*}
where the last inequality is proven using the prime number theorem, similarly to how we proved the last equality appearing in \eqref{eq:main_theorem_from_moments}. The aforementioned inequalities clearly imply \eqref{eq:tail_main}. 
Finally, \eqref{eq:tail_error} is proven analogously, by expanding the polynomial $\mathcal{Q}$ and taking into account that for $\alpha=\frac{1}{\log^3L}$ and for any integers $n$ and $m$ we have that $\log^nX\log^m(rs)\ll (\log^{n+m}X)N^{-\alpha}(ars)^{\alpha}$.

\textbf{Final step}. Combining \eqref{sum_truncated} with the asymptotic estimate for the main term, provided by \eqref{eq:Asymptotic}, and the asymptotic estimates for the error term, provided by \eqref{eq:Q0_estimate} and \eqref{eq:Q1_estimate}, as well as the asymptotic estimates of the tails, provided by \eqref{eq:tail_main} and \eqref{eq:tail_error}, we obtain that
\[
M_2(R,X)\sim c_5 X(\log^3 X)\prod_{p \ \text{prime}}\left(1+f(p)^2-4\frac{f(p)}{\sqrt{p}}\right),
\]
and we finish the proof of \cref{SecondMoment}.
\end{proof}


\section{The right half of the critical strip}\label{sec:cricialright}

The aim of this section is to show that the ``Dedekind height'' $h_\sigma \colon \mathcal{N} \to \mathbb{R}$ introduced in \eqref{eq:Dedekind_height}, does not have the Bogomolov property if $\frac{1}{2} < \sigma \leq 1$. This was done by Généreux and Lalín in \cite{GeLa24}, conditionally on the validity of the generalized Riemann hypothesis. 

We will mention two ways in which one can obtain this result unconditionally. The first is obtained by adapting the resonator method to the right half of the critical strip, as suggested by Soundararajan, while the second is based on a result of Lamzouri \cite{Lamzouri_2011} for $\frac{1}{2} < \sigma < 1$, and a result of Granville and Soundararajan \cite{Granville_Soundararajan_2003} for $\sigma = 1$.

\subsection{Resonator method} In this section, we show that the resonator method used in the previous \cref{sec:cricial_center} can also be adapted to the right half of the critical strip. To do so, let us first define 
    \begin{equation}\label{resonator2}
     R(8d) := \sum_{l \leq X} r(l) \chi_{8d}(l)   
    \end{equation}
     analogously to \cref{sec:cricial_center}. Here we take once again $r(l) = \mu(l) f(l)$, where $f$ is the multiplicative function defined in \eqref{eq:f_right}, which depends on the specific value of $\sigma$, and on the parameter $L = \sqrt{\log X}$. 

For every real number $\sigma \in (\frac{1}{2},1)$ let us consider the following two moments:
\begin{align}
    M_1(R,X,\sigma) &= \sum_{\frac{X}{8}\leq d\leq \frac{5X}{16}}\mu(2d)^2R(8d)^2 L(\sigma,\chi_{8d}) \label{eq:FirstMoment_right} \\
    M_2(R,X,\sigma) &= \sum_{\frac{X}{8}\leq d\leq \frac{5X}{16}} \mu(2d)^2 R(8d)^2 (L(\sigma,\chi_{8d}))^2. \label{eq:SecondMoment_right} 
\end{align}
Analogously to \cref{sec:cricial_center}, we want to compute the asymptotic behavior of those moments and to prove \cref{Bogomolov2} using \eqref{eq:Titu}, as we did in the proof of \cref{Bogomolov}. 
To determine the aforementioned asymptotic behavior of \eqref{eq:FirstMoment_right} and \eqref{eq:SecondMoment_right}, we will need the following version of Rankin's trick, which resembles the estimates \eqref{eq:tail_main} and \eqref{eq:tail_error} appearing in the previous section.

\begin{lemma}[Rankin's Trick]\label{rankin's trick}
Fix $\sigma \in (\frac{1}{2},1)$, and let $D>0$. Then
\begin{align*}
\sum_{\substack{a,r,s\\ (ar>X) \vee (as>X)}}
\mu(a)^2f(a)^2\frac{\mu(r)^2f(r)\epsilon_D(r)}{r^\sigma}\frac{\mu(s)^2f(s)\epsilon_D(s)}{s^\sigma}=o\left(\prod_{p\text{ prime}}\left(1+f(p)^2-\frac{2Df(p)}{p^\sigma}\right)\right),
\end{align*}
where $\mu$ is the Möbius function, defined in \eqref{eq:mobius}, while $f$ is defined in \eqref{eq:f_right} and $\epsilon_D$ is defined in \eqref{eq:epsilon}.
\end{lemma}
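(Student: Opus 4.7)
The strategy is Rankin's trick. Note that $a, r, s \geq 1$ and either $ar > X$ or $as > X$ together imply $ars > X$; hence for any $\alpha > 0$ we have $\mathbf{1}_{ar > X \text{ or } as > X} \leq X^{-\alpha} (ars)^\alpha$. Substituting this majorization and using that $\mu^2$ restricts to squarefree integers while $f$ and $\epsilon_D$ are multiplicative, the tail factors into three Euler products:
\[
\mathrm{LHS} \leq X^{-\alpha} \prod_p (1 + f(p)^2 p^\alpha) \cdot \left( \prod_p \left(1 + \frac{D f(p) p^\alpha}{p^\sigma} \right) \right)^2.
\]

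Next, I would compare this upper bound to the target product $P(X) := \prod_p (1 + f(p)^2 - 2Df(p)/p^\sigma)$. Since $f(p) = L/p^\sigma$ with $L = \sqrt{\log X}$, every nontrivial factor of $P(X)$ equals $1 + L(L-2D)/p^{2\sigma}$, which is positive once $L > 2D$, so $P(X)$ is a convergent product of positive numbers. Forming the quotient and passing to logarithms, it suffices to prove
\[
-\alpha \log X + \sum_{p \geq L^{1/\sigma}} \log\!\left( \frac{(1 + f(p)^2 p^\alpha)(1 + Df(p) p^{\alpha-\sigma})^2}{1 + f(p)^2 - 2Df(p)/p^\sigma} \right) \longrightarrow -\infty
\]
as $X \to \infty$. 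Using $\log(1+x) \leq x$ in the numerator and bounding the denominator from below, together with standard partial summation derived from the prime number theorem (in the form $\sum_{p \geq Y} p^{-\beta} \sim Y^{1-\beta}/((\beta-1)\log Y)$ for fixed $\beta > 1$), one verifies that the prime sum above is of order at most $C(\sigma, D) \cdot (\log X)^{1/(2\sigma)}/\log\log X$ uniformly in $\alpha$ small.

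Finally, I would set $\alpha := (\log\log X)^{-2}$, which lies below $2\sigma - 1$ for $X$ large (securing convergence of the Euler products) and which ensures $\alpha \log X = \log X/(\log\log X)^2$. Since $1/(2\sigma) < 1$ for $\sigma > \tfrac{1}{2}$, this dominates the prime-sum bound $(\log X)^{1/(2\sigma)}/\log\log X$, driving the logarithmic ratio to $-\infty$ and proving the claim. The main obstacle is the uniform control over the per-prime logarithmic factors near the threshold $p \approx L^{1/\sigma}$, where $f(p) \asymp 1$ and naive Taylor expansions degrade; this is handled by isolating a thin boundary range of primes whose total contribution is negligible, leaving a bulk range in which Taylor expansion applies uniformly.
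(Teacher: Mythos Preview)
Your proof is correct and follows essentially the same approach as the paper. Both arguments use Rankin's trick, factor into Euler products, and invoke prime-number-theorem estimates for sums $\sum_{p \geq Y} p^{-\beta}$. The only differences are cosmetic: the paper splits into the two cases $ar > X$ and $as > X$ by a union bound and applies Rankin only to the constrained pair (leaving the third variable's sum undeformed), whereas you observe $ars > X$ and apply $(ars)^\alpha$ to all three variables at once; and the paper works with a fixed $\alpha \in (0,2\sigma-1)$ while you let $\alpha = (\log\log X)^{-2}$. Both choices lead to the same conclusion.

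One remark: the ``main obstacle'' you flag at the end---the boundary range $p \approx L^{1/\sigma}$ where $f(p) \asymp 1$---is not in fact an obstacle. The inequality $\log(1+x) \leq x$ holds for all $x > -1$, so it applies to the numerator factors without any smallness assumption; and since $f(p)^2 - 2Df(p)/p^\sigma = L(L-2D)/p^{2\sigma} \geq 0$ once $L > 2D$, the denominator is $\geq 1$ and contributes nonpositively to the log-ratio. No separate treatment of a thin boundary range is needed.
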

\begin{proof}
We clearly have that
\begin{align*}
    &\sum_{\substack{a,r,s\\ (ar>X) \vee (as>X)}}
\mu(a)^2f(a)^2\frac{\mu(r)^2f(r)\epsilon_D(r)}{r^\sigma}\frac{\mu(s)^2f(s)\epsilon_D(s)}{s^\sigma} \\ \leq  &\left( \sum_{\substack{a,r,s\\ ar>X}}
\mu(a)^2f(a)^2\frac{\mu(r)^2f(r)\epsilon_D(r)}{r^\sigma}\frac{\mu(s)^2f(s)\epsilon_D(s)}{s^\sigma} \right) + \left( \sum_{\substack{a,r,s\\ as>X}}
\mu(a)^2f(a)^2\frac{\mu(r)^2f(r)\epsilon_D(r)}{r^\sigma}\frac{\mu(s)^2f(s)\epsilon_D(s)}{s^\sigma} \right),
\end{align*}
and, by symmetry, we may consider only the first sum appearing on the right-hand side. Now, notice that
\begin{align*}
    \sum_{\substack{a,r,s\\ ar>X}}
\mu(a)^2f(a)^2\frac{\mu(r)^2f(r)\epsilon_D(r)}{r^\sigma}\frac{\mu(s)^2f(s)\epsilon_D(s)}{s^\sigma} &= \left( \sum_{s} \frac{\mu(s)^2f(s)\epsilon_D(s)}{s^\sigma} \right) \cdot \left( \sum_{\substack{a,r\\ ar>X}} \mu(a)^2f(a)^2\frac{\mu(r)^2f(r)\epsilon_D(r)}{r^\sigma} \right) \\ &= \left( \prod_{p \ \text{prime}} \left( 1 + \frac{D f(p)}{p^\sigma} \right) \right) \cdot \left( \sum_{\substack{a,r\\ ar>X}} \mu(a)^2f(a)^2\frac{\mu(r)^2f(r)\epsilon_D(r)}{r^\sigma} \right)
\end{align*}
as follows easily from the definition of $\epsilon_D$ given in \eqref{eq:epsilon}. Moreover, if $\alpha \in (0,2\sigma-1)$ we have that
\begin{align*}
    \sum_{\substack{a,r\\ ar>X}} \mu(a)^2f(a)^2\frac{\mu(r)^2f(r)\epsilon_D(r)}{r^\sigma} &= \sum_{\substack{a,r\\ ar>X}} \mu(a)^2f(a)^2 r^{-\alpha} \frac{\mu(r)^2f(r)\epsilon_D(r)}{r^{\sigma - \alpha}} \\
    &< X^{-\alpha} \sum_{\substack{a,r\\ ar>X}} \mu(a)^2f(a)^2 a^{\alpha} \frac{\mu(r)^2f(r)\epsilon_D(r)}{r^{\sigma - \alpha}} \\
    &\leq X^{-\alpha} \sum_{a,r} \mu(a)^2f(a)^2 a^{\alpha} \frac{\mu(r)^2f(r)\epsilon_D(r)}{r^{\sigma - \alpha}} \\
    &= X^{-\alpha} \prod_{p \ \text{prime}} \left( 1 + f(p)^2 p^\alpha + \frac{D f(p)}{p^{\sigma - \alpha}} \right)
\end{align*}
because $ar > X$ and therefore $r^{-\alpha} < X^{-\alpha} a^\alpha$. 
This implies that
\begin{align*}
    &\phantom{\leq} \left( \sum_{\substack{a,r,s\\ ar>X}}
\mu(a)^2f(a)^2\frac{\mu(r)^2f(r)\epsilon_D(r)}{r^\sigma}\frac{\mu(s)^2f(s)\epsilon_D(s)}{s^\sigma} \right) \, \left( \prod_{p \ \text{prime}}\left(1+f(p)^2-\frac{2Df(p)}{p^ \sigma}\right)\right)^{-1} \\ 
     &\leq \ X^{-\alpha} \prod_{p \ \text{prime}} \left( 1+\frac{D f(p)}{p^\sigma} \right) \, \left( 1 + f(p)^2 p^\alpha + \frac{D f(p)}{p^{\sigma - \alpha}} \right) \left(1+f(p)^2-\frac{2Df(p)}{p^ \sigma}\right)^{-1} \\
     &\ll \exp\left(-\alpha\log X+\sum_{p \ \text{prime}}\left(f(p)^2(p^\alpha-1)+4Df(p)p^{\alpha-\sigma}\right)\right)\\
&\ll \exp\left(-\alpha \log X \right),
\end{align*}
where the last inequality can be obtained using the prime number theorem, as was done to prove the last equality appearing in \eqref{eq:main_theorem_from_moments}.
\end{proof}

\textbf{First moment:}
In order to bound the first moment $M_1(R,X,\sigma)$ from below, we will approximate $L(\sigma,\chi_{8d})$ with the infinite sum \begin{equation} \label{eq:twisted_L_right}
    \sum_{n=1}^{\infty} \frac{\chi_{8d}(n)}{n^\sigma} e^{-\frac{n}{X^2}},
\end{equation}
which can be regarded as a ``twisted version'' of the Dirichlet series defining $L(s,\chi_{8d})$, evaluated at $s = \sigma$. More precisely, the Dirichlet series $\sum_{n=1}^{+\infty} \chi_{8d}(n) n^{-s}$ clearly does not converge for $s = \sigma$, because $\sigma \in \left( \frac{1}{2},1 \right)$. However, the exponential factor $\exp(-nX^{-2})$ appearing in \eqref{eq:twisted_L_right} makes this series converge. Furthermore, the value of \eqref{eq:twisted_L_right} is sufficiently close to $L(s,\chi_{8d})$, as shown by the following result.

\begin{proposition}
\label{L-function approximation}
For every $\sigma \in (\frac{1}{2},1)$ and $\delta \in \left(0, \sigma-\frac{1}{2}\right)$ there exists two positive constants $c_8(\sigma,\delta)$ and $c_9(\delta)$ such that for every $X \geq c_9(\delta)$ we have that
\begin{equation}\label{error}
\sum_{\frac{X}{8}\leq d\leq \frac{5X}{16}}\mu(2d)^2R(8d)^2\left|L(\sigma,\chi_{8d})-\sum_{n=1}^{\infty}\frac{\chi_{8d}(n)}{n^\sigma} e^{-\frac{n}{X^2}}\right|^2 \leq c_8(\sigma,\delta) X^{1-4\delta}\prod_{p\text{ prime}}\left(1+f(p)^2+4\frac{f(p)}{p^{\sigma-\delta}}\right).
\end{equation}
In particular, we have that
\[
\left\lvert M_1(R,X,\sigma) - \sum_{\frac{X}{16}\leq d\leq \frac{X}{8}}\mu(2d)^2R(8d)^2\sum_{n=1}^{\infty}\frac{\chi_{8d}(n)}{n^\sigma} e^{-\frac{n}{X^2}} \right\rvert \leq c_8(\sigma,\delta) X^{1-4\delta}\prod_{p\text{ prime}}\left(1+f(p)^2+4\frac{f(p)}{p^{\sigma-\delta}}\right).
\].
\end{proposition}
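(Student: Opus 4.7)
The plan is to represent the error $E_d := L(\sigma,\chi_{8d}) - \sum_{n \geq 1}\chi_{8d}(n)n^{-\sigma}e^{-n/X^2}$ as a contour integral, then control its $L^2$-average with weight $\mu(2d)^2 R(8d)^2$. Starting from the Mellin identity $e^{-y}=\frac{1}{2\pi i}\int_{(c)}\Gamma(w)y^{-w}\,dw$ valid for $c>0$ and $y>0$, substituting $y=n/X^2$ and summing against $\chi_{8d}(n)/n^\sigma$ (legitimate for $c>1-\sigma$ by absolute convergence) gives
\[
\sum_{n\geq 1}\frac{\chi_{8d}(n)}{n^\sigma}e^{-n/X^2} = \frac{1}{2\pi i}\int_{(c)}\Gamma(w)X^{2w}L(\sigma+w,\chi_{8d})\,dw.
\]
Since $L(\cdot,\chi_{8d})$ is entire for $d\geq 1$, I would shift the contour to $\mathrm{Re}(w)=-\delta$, with $0<\delta<\sigma-\frac{1}{2}$, picking up only the residue $L(\sigma,\chi_{8d})$ at the simple pole of $\Gamma$ at $w=0$. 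The shift is justified by Stirling's decay of $\Gamma$ on vertical lines and the convexity bound for $L$. This yields $E_d = -\frac{1}{2\pi i}\int_{(-\delta)}\Gamma(w)X^{2w}L(\sigma+w,\chi_{8d})\,dw$.

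Applying Cauchy--Schwarz on this integral with weight $|\Gamma(-\delta+it)|$, and recognising the resulting factor from \eqref{eq:c10},
\[
|E_d|^2 \leq \frac{X^{-4\delta}}{4\pi^2}\Bigl(\int_{\mathbb{R}}|\Gamma(-\delta+it)|\,dt\Bigr)\int_{\mathbb{R}}|\Gamma(-\delta+it)|\,|L(\sigma-\delta+it,\chi_{8d})|^2\,dt.
\]
After summing against $\mu(2d)^2 R(8d)^2$ and swapping summation and integration, the task reduces to bounding, uniformly in $t$, the quantity
\[
S(t):=\sum_{X/8\leq d\leq 5X/16}\mu(2d)^2 R(8d)^2\,|L(\sigma-\delta+it,\chi_{8d})|^2.
\]
I would estimate $S(t)$ following the template of the proof of \cref{SecondMoment}, but now on the shifted line $\mathrm{Re}(s)=\sigma-\delta>\frac{1}{2}$, which simplifies matters since we are strictly to the right of the critical line. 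The key steps are: factor $|L|^2 = L(\sigma-\delta+it,\chi_{8d})L(\sigma-\delta-it,\chi_{8d})$, approximate both factors by smoothed finite Dirichlet series in the spirit of \eqref{eq1}, expand $R(8d)^2=\sum_{l_1,l_2\leq X}r(l_1)r(l_2)\chi_{8d}(l_1l_2)$, and apply orthogonality of the quadratic characters $\chi_{8d}$ together with Pólya--Vinogradov as in \cite[Lemma~3.2]{Soundararajan_2008} to isolate the diagonal contribution, where $l_1l_2n_1n_2$ is a square. The diagonal main term then factors as the Euler product $\prod_p(1+f(p)^2+4f(p)/p^{\sigma-\delta})$ by a direct computation, using that $r(l)=\mu(l)f(l)$ is supported on squarefree integers and $f$ is completely multiplicative.

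Combining all pieces yields the stated bound \eqref{error}, the integral over $t$ being absolutely convergent by Stirling (which absorbs any polynomial growth in $|t|$ coming from the convexity bound for $L$). The ``In particular'' inequality then follows: writing $a_d:=\mu(2d)R(8d)$ and applying Cauchy--Schwarz,
\[
\Bigl|M_1(R,X,\sigma) - \text{approx}\Bigr| \leq \sum_d a_d^2|E_d| \leq \Bigl(\sum_d a_d^2\Bigr)^{\!1/2}\Bigl(\sum_d a_d^2|E_d|^2\Bigr)^{\!1/2},
\]
where the first factor is of order $X\prod_p(1+f(p)^2)$ by a first-moment diagonal calculation analogous to \cref{FirstMoment}, and the second is controlled by \eqref{error}. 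The main obstacle is the uniform bound on $S(t)$: although conceptually it mirrors \cref{SecondMoment}, one must track the Euler product through the orthogonality step carefully so that it matches $\prod_p(1+f(p)^2+4f(p)/p^{\sigma-\delta})$ exactly; the coefficient $4$ arises from combining the two $L$-factors with the two copies of $R$, which is precisely where the analogous coefficient appears in \cref{SecondMoment}.
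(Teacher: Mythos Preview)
Your overall strategy coincides with the paper's: Mellin representation of $e^{-y}$, contour shift to $\mathrm{Re}(w)=-\delta$ picking up the residue $L(\sigma,\chi_{8d})$, then Cauchy--Schwarz on the remaining integral with weight $|\Gamma(-\delta+it)|$. The one substantive difference is in how the inner twisted second moment is handled. You propose to keep $R(8d)^2$ intact and bound $S(t)=\sum_d \mu(2d)^2R(8d)^2|L(\sigma-\delta+it,\chi_{8d})|^2$ by rerunning the machinery of \cref{SecondMoment} on the shifted line. The paper instead expands $R(8d)^2=\sum_{n_1,n_2}r(n_1)r(n_2)\chi_{8d}(n_1n_2)$ \emph{first}, majorises the $d$-range indicator by $e^{1/2}\Phi(8d/X)$, and then recognises the resulting inner sum as exactly Sono's twisted moment $M_{w-\frac12,\overline{w}-\frac12}(n_1n_2;X/8)$ from \eqref{eq:M_Sono}, for which \cite[Equation~(4.80)]{sono2020second} gives the ready-made bound $\ll X(rs)^{-1/2}A_{c-\frac12,c-\frac12}(a^2rs)$. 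A one-line Euler product estimate on $A$ then produces $\tau(rs)/(rs)^{\sigma-\delta}$, and summing over $a,r,s$ (extended via \cref{rankin's trick}) yields $\prod_p(1+f(p)^2+4f(p)/p^{\sigma-\delta})$ directly. Your route would work but amounts to re-deriving Sono's asymptotic with the resonator already built in; the paper's shortcut is precisely what keeps the proof short.

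One minor remark on the ``In particular'' clause: your Cauchy--Schwarz argument, combining \eqref{error} with the elementary bound $\sum_d \mu(2d)^2R(8d)^2\ll X\prod_p(1+f(p)^2)$, actually produces an upper bound of order $X^{1-2\delta}$ times a geometric mean of two Euler products, not the $X^{1-4\delta}$ with a single product as literally stated. Either form is amply sufficient for the application in \cref{M1alpha}, so this does not affect correctness.
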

\begin{proof}
Fix some $\delta \in \left( 0, \sigma - \frac{1}{2} \right)$. We use the following equality 
\begin{equation}\label{Exponential}
e^{-y}=\frac{1}{2\pi i}\int_{(b)}\Gamma(w)y^{-w}dw,
\end{equation}
which is valid for every $y,b>0$. Taking $b>1$ and  expanding $L(w,\chi_{8d})$ into its Dirichlet series for $\text{Re}(w)=b$ we have that
\begin{equation*}
\sum_{n=1}^{\infty}\frac{\chi_{8d}(n)}{n^{\sigma}} e^{-\frac{n}{X^2}}=\frac{1}{2\pi i}\int_{(b)}\frac{\Gamma(w-\sigma)}{X^{2(\sigma-w)}}L(w,\chi_{8d})dw.
\end{equation*}
Moving the integration contour from $(b)$ to $(c)$, where $c := \sigma - \delta$, we have a simple pole at $w=\sigma$ with residue $L(\sigma,\chi_{8d})$. Hence
\begin{align*}
L(\sigma,\chi_{8d})-\sum_{n=1}^{\infty}\frac{\chi_{8d}(n)}{n^{\sigma}} e^{-\frac{n}{X^2}}=\frac{1}{2\pi i}\int_{(c)}\frac{\Gamma(w-\sigma)}{X^{2(\sigma-w)}}L(w,\chi_{8d})dw.
\end{align*}

Applying the Cauchy-Schwarz integral inequality,
we notice, therefore that
\begin{align}
    \notag &\phantom{\leq} \sum_{\frac{X}{8}\leq d\leq \frac{5X}{16}}\mu(2d)^2R(8d)^2\left|L(\sigma,\chi_{8d})-\sum_{n=1}^{\infty}\frac{\chi_{8d}(n)}{n^\sigma} e^{-\frac{n}{X^2}}\right|^2 \\ 
    \notag &= \frac{1}{4\pi^2} \sum_{\frac{X}{8}\leq d\leq \frac{5X}{16}}\mu(2d)^2R(8d)^2\left| \int_{(c)}\frac{\Gamma(w-\sigma)}{X^{2(\sigma-w)}}L(w,\chi_{8d})dw \right|^2 
    \\
    &\leq \frac{1}{4\pi^2} \sum_{\frac{X}{8}\leq d\leq \frac{5X}{16}}\mu(2d)^2R(8d)^2 \left( \int_{(c)} \lvert \Gamma(w-\sigma) \rvert dw \right) \left( \int_{(c)} \frac{\lvert \Gamma(w-\sigma) \rvert}{\lvert X^{2 (\sigma - w)} \rvert^2} \lvert L(w,\chi_{8d}) \rvert^2 dw \right) 
    \\
    \notag &= \frac{1}{4\pi^2 X^{4\delta}} \left( \int_{-\infty}^{+\infty} \lvert \Gamma(it-\delta) \rvert dt \right) \sum_{n_1,n_2 \leq X} r(n_1) r(n_2) \left( \int_{(c)} \lvert \Gamma(w-\sigma) \rvert \left( \sum_{\frac{X}{8} \leq d \leq \frac{5 X}{16}} \mu(2d)^2 \chi_{8d}(n_1 n_2) \lvert L(w,\chi_{8d}) \rvert^2 \right) dw \right). 
\end{align}
Observe that the sum appearing in the last integral runs over those integers $d$ such that $1 \leq \frac{8d}{X} \leq \frac{5}{2}$. Therefore, if $\Phi$ denotes the function defined in \eqref{eq:Phi} we have that
\[
    \sum_{\frac{X}{8} \leq d \leq \frac{5 X}{16}} \mu(2d)^2 \chi_{8d}(n_1 n_2) \lvert L(w,\chi_{8d}) \rvert^2 \leq e^{\frac{1}{2}} \sum_{\frac{X}{8} \leq d \leq \frac{5 X}{16}} \mu(2d)^2 \chi_{8d}(n_1 n_2) \lvert L(w,\chi_{8d}) \rvert^2 \Phi\left( \frac{8d}{X} \right) = e^{\frac{1}{2}} M_{w-\frac{1}{2},\overline{w}-\frac{1}{2}}\left( n_1 n_2; \frac{X}{8} \right),
\]
where $M$ is the function defined in \eqref{eq:M_Sono}.
Hence, if $c_{10}(\delta)$ is the constant defined in \eqref{eq:c10}, which depends only on $\delta$, we have that
\begin{equation} \label{eq:bound_right_L-regularized}
    \begin{aligned}
        &\phantom{\leq} \sum_{\frac{X}{8}\leq d\leq \frac{5X}{16}}\mu(2d)^2R(8d)^2\left|L(\sigma,\chi_{8d})-\sum_{n=1}^{\infty}\frac{\chi_{8d}(n)}{n^\sigma} e^{-\frac{n}{X^2}}\right|^2 \\ 
        &\leq c_{10}(\delta) X^{-4\delta} \sum_{n_1,n_2 \leq X} r(n_1) r(n_2) \left( \int_{(c)} \lvert \Gamma(w-\sigma) \rvert M_{w-\frac{1}{2},\overline{w}-\frac{1}{2}}\left( n_1 n_2;\frac{X}{8} \right) dw \right) \\
        &= c_{10}(\delta) X^{-4\delta} \sum_{\substack{a,r,s\\ar,as\leq X\\(a,r)=(a,s)=(r,s)=1}}
\mu(a)^2f(a)^2\mu(r)^2f(r)\mu(s)^2f(s) \left( \int_{(c)} \lvert \Gamma(w-\sigma) \rvert M_{w-\frac{1}{2},\overline{w}-\frac{1}{2}}\left( a^2 r s;\frac{X}{8} \right) dw \right),
    \end{aligned}
\end{equation}
where in the last equality, we performed the same substitution appearing in \eqref{equ3}.

To proceed, we need to recall a result of Sono \cite[Equation~(4.80)]{sono2020second}, which implies that there exist two positive absolute constants $c_{11}$ and $c_{12}$ such that for every $w \in \mathbb{C}$ with $\mathrm{Re}(w) = c$, every $X$ and every $a,r,s$ we have that
\begin{equation} \label{eq:bound_M_Sono}
    M_{w-\frac{1}{2},\overline{w}-\frac{1}{2}}\left( a^2 r s;\frac{X}{8} \right) \leq \frac{c_{11}}{\sqrt{rs}} \left( \int_0^{+\infty} \Phi\left(\frac{8t}{X}\right) dt \right) A_{w-\frac{1}{2},\overline{w}-\frac{1}{2}}(a^2rs) \leq c_{12} X \frac{1} {\sqrt{rs}} A_{c-\frac{1}{2},c-\frac{1}{2}}(a^2 r s),
\end{equation}
where $A$ is the function defined in \eqref{eq:A}.
Using this definition, we see that
\begin{equation}
\label{eq:bound_A_Sono}
    \frac{1}{\sqrt{rs}} A_{c-\frac{1}{2},c-\frac{1}{2}}(a^2 r s) = \frac{1}{\sqrt{rs}} \sum_{n \ \text{odd}} \frac{1}{n} \frac{\tau(rsn^2)}{(rsn^2)^{c-\frac{1}{2}}} \prod_{\substack{p \ \text{prime} \\p \mid arsn}} \left( \frac{p}{p+1} \right) \leq \frac{\tau(rs)}{(rs)^c} \sum_{n \ \text{odd}} \frac{\tau(n^2)}{n^{2c}} = \frac{\tau(rs)}{(rs)^c} \frac{\zeta^3(2c)}{\zeta(4c)}.
\end{equation}
Therefore, combining \eqref{eq:bound_right_L-regularized} with \eqref{eq:bound_M_Sono} and \eqref{eq:bound_A_Sono} we see that there exists a constant $c_8(\sigma,\delta) > 0$, depending on $\sigma$ and $\delta$, such that
\[
\begin{aligned}
    &\phantom{c_8(\delta)} \sum_{\frac{X}{8}\leq d\leq \frac{5X}{16}}\mu(2d)^2R(8d)^2\left|L(\sigma,\chi_{8d})-\sum_{n=1}^{\infty}\frac{\chi_{8d}(n)}{n^\sigma} e^{-\frac{n}{X^2}}\right|^2 \\ 
    &\leq c_8(\sigma,\delta) X^{1-4\delta} \sum_{\substack{a,r,s\\ar,as\leq X\\(a,r)=(a,s)=(r,s)=1}}
\mu(a)^2f(a)^2\frac{\mu(r)^2f(r)\tau(r)}{r^{\sigma - \delta}}\frac{\mu(s)^2 f(s) \tau(s)}{s^{\sigma - \delta}}.
\end{aligned}
\]
To conclude, we extend the sum over $a$, $r$, and $s$ to run over all integers, since \cref{rankin's trick} with $D=2$ shows that the tail of this sum is little-$o$ of the right-hand side of \eqref{error}. Once we have done this, we can observe that
\[
\begin{aligned}
    \sum_{\substack{a,r,s\\(a,r)=(a,s)=(r,s)=1}}\mu(a)^2f(a)^2\frac{\tau(r)\mu(r)^2f(r)}{r^{\sigma-\delta}}\frac{\mu(s)^2f(s)\tau(s)}{s^{\sigma-\delta}} &= \prod_{p\text{ prime}}\left(1+f(p)^2+ 2\frac{f(p)\tau(p)}{p^{\sigma-\delta}}\right) \\
    &= \prod_{p\text{ prime}}\left(1+f(p)^2+ 4\frac{f(p)}{p^{\sigma-\delta}}\right),
\end{aligned}
\]
which allows us to conclude.
\end{proof}

The following lemma gives the asymptotic behavior of the expression that one obtains from $M_1(R,X,\sigma)$ by replacing $L(\sigma,\chi_{8d})$ with the sum $\sum_{n=1}^{\infty}\chi_{8d}(n)n^{-\sigma}\exp(-n X^{-2})$.

\begin{lemma}\label{M1 approximation}
Let $\sigma \in \left( \frac{1}{2},1 \right)$. Then, we have that
\begin{equation}\label{eq M1 approx}
\sum_{\frac{X}{8}\leq d\leq \frac{5X}{16}}\mu(2d)^2R(8d)^2\sum_{n=1}^{\infty}\frac{\chi_{8d}(n)}{n^\sigma} e^{-\frac{n}{X^2}} \sim c_{13}(\sigma) X\prod_{p\text{ prime}}\left(1+f(p)^2-2\frac{f(p)}{p^{\sigma}}\right)   
\end{equation}
as $X \to +\infty$, where $c_{13}(\sigma)$ is some positive constant depending only on $\sigma$.
\end{lemma}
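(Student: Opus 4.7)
The plan is to prove \cref{M1 approximation} by the standard ``resonator first moment'' calculation, adapted from \cref{FirstMoment} and \cite{Soundararajan_2008}, with the key simplification that $\sigma > \frac{1}{2}$ makes the Dirichlet series $\sum_\ell \ell^{-2\sigma}$ convergent at the relevant place; this removes the $\log X$ factor that appears on the critical line and produces exactly the linear growth $c_{13}(\sigma) X$ stated.

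First I would expand $R(8d)^2 = \sum_{n_1, n_2 \leq X} r(n_1) r(n_2) \chi_{8d}(n_1 n_2)$ and swap sums to rewrite the left-hand side of \eqref{eq M1 approx} as
\[
\sum_{n_1, n_2 \leq X} r(n_1) r(n_2) \sum_{n=1}^{\infty} \frac{e^{-n/X^2}}{n^\sigma}\, S(n n_1 n_2), \qquad S(m) := \sum_{X/8 \leq d \leq 5X/16} \mu(2d)^2 \chi_{8d}(m).
\]
As in \cite[Lemma~3.2]{Soundararajan_2008} (and the derivation of \eqref{equ2} above), the Pólya--Vinogradov inequality gives $S(m) \ll X^{1/2+\varepsilon} m^{1/4+\varepsilon}$ when $m$ is not an odd perfect square, while for $m$ an odd square one has $S(m) \sim c' X \prod_{p \mid 2m} \tfrac{p}{p+1}$ for an absolute constant $c'$. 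The non-square contribution, summed against the decaying weights $e^{-n/X^2} n^{-\sigma}$ and the resonator coefficients $|r(n_i)|$, gives an error of size $o\!\left(X \prod_p (1 + f(p)^2 - 2 f(p)/p^\sigma)\right)$; this is where one uses that $r(l) = \mu(l) f(l)$ is supported on squarefree integers with all prime factors $\geq L^{1/\sigma} = (\log X)^{1/(2\sigma)}$.

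On the diagonal one then parametrises $n_1 = ar$, $n_2 = as$ with $(a,r) = (a,s) = (r,s) = 1$ and $n = rs\ell^2$ with $\ell$ odd, exactly as in the derivation of \eqref{equ3}. The inner $\ell$-sum is handled via the Mellin identity $e^{-y} = \frac{1}{2\pi i}\int_{(c)} \Gamma(w) y^{-w} dw$; shifting the contour past the simple pole of $\Gamma(w)$ at $w = 0$ (the factor $\zeta(2\sigma + 2w)$ contributes no pole since $2\sigma > 1$) yields
\[
\sum_{\ell\text{ odd}} \frac{e^{-rs\ell^2/X^2}}{\ell^{2\sigma}} \prod_{p \mid 2ars\ell} \frac{p}{p+1} \;=\; (1 - 2^{-2\sigma})\,\zeta(2\sigma)\!\!\prod_{p \mid 2ars}\!\frac{p}{p+1} \;+\; O_{\sigma,\delta}\!\left(\!\left(\tfrac{rs}{X^2}\right)^{\!\delta}\right)
\]
for some small $\delta = \delta(\sigma) > 0$, uniformly in $a, r, s$.

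Finally I would apply Rankin's trick in the form of \cref{rankin's trick} (with $D = 1$) to extend the sum over $(a,r,s)$ to all pairwise coprime triples with negligible tail. The resulting unrestricted sum factors as an Euler product whose local factor at each odd $p$ is proportional (via a convergent $f$-independent constant product of $p/(p+1)$-type factors) to $1 + f(p)^2 - 2 f(p)/p^\sigma$; absorbing those convergent $f$-independent factors together with $(1 - 2^{-2\sigma})\,\zeta(2\sigma)\,c'$ into a single positive constant $c_{13}(\sigma)$ produces the stated asymptotic. The main obstacle I anticipate is maintaining uniformity in $r, s$ through the Mellin contour shift: the error $(rs/X^2)^\delta$ has to be summed against $\mu(r)f(r)\mu(s)f(s)(rs)^{-\sigma+\delta}$, which stays controlled thanks to the constraints $ar, as \leq X$ and a second application of Rankin's trick in the spirit of Step~3 of the proof of \cref{SecondMoment}.
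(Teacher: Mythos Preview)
Your proposal is correct and follows essentially the same approach as the paper: expand $R(8d)^2$, apply Pólya--Vinogradov as in \cite[Lemma~3.2]{Soundararajan_2008} to isolate the odd-square diagonal, parametrise via $(a,r,s,\ell)$, evaluate the $\ell$-sum by the Mellin integral for $e^{-y}$ and shift past the simple pole of $\Gamma(w)$ at $w=0$ (using $2\sigma>1$ so that $\zeta(2\sigma+2w)$ contributes no pole), then extend the $(a,r,s)$-sum via \cref{rankin's trick} with $D=1$ and factor into the Euler product. The paper controls the contour-shift error by the same observation you flag, namely that $rs\leq (X/a)^2$ under the constraints $ar,as\leq X$.
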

\begin{proof}
By substituting the definition of $R(8d)$ we have
\[
\sum_{\frac{X}{8}\leq d\leq \frac{5X}{16}}\mu(2d)^2R(8d)^2\sum_{n=1}^{\infty}\frac{\chi_{8d}(n)}{n^\sigma} e^{-\frac{n}{X^2}} = \sum_{n_1,n_2\leq X}r(n_1)r(n_2)\sum_{n=1}^{\infty}\frac{e^{-\frac{n}{X^2}}}{n^{\sigma}}\sum_{\frac{X}{8}\leq d\leq \frac{5X}{16}}\mu(2d)^2\chi_{8d}(nn_1n_2).
\]
Applying the Polya--Vinogradov inequality, as done in the proof of \cref{SecondMoment} and of \cite[Lemma~3.2]{Soundararajan_2008}, we see that the right-hand side of the previous equation is asymptotically equivalent to
\begin{equation}\label{sum chi}
\frac{3X}{8\zeta(2)}\sum_{\substack{a,r,s\\ar,as\leq X\\(a,r)=(a,s)=(r,s)=1}}
\mu(a)^2f(a)^2\frac{\mu(r)f(r)\mu(s)f(s)}{(rs)^{\sigma}}\sum_{m \text{ odd}}\frac{e^{-\frac{rsm^2}{X^2}}}{m^{2\sigma}}\prod_{\substack{p \ \text{prime} \\ p\mid arsm}}\left(\frac{p}{p+1}\right),
\end{equation}
as $X \to +\infty$.
Let us first focus on the interior sum over $m$. Thanks to \eqref{Exponential}, we see that
\begin{equation}\label{integral}
\begin{aligned}
&\sum_{m \text{ odd}}\frac{e^{-\frac{rsm^2}{X^2}}}{m^{2\sigma}}\prod_{\substack{p \ \text{prime} \\ p\mid 2arsm}} \left(\frac{p}{p+1}\right) \\ = &\frac{1}{2\pi i} \int_{(b)}\Gamma(w)\left(\frac{X^2}{rs}\right)^{w}\sum_{m \text{ odd}}\frac{1}{m^{2(\sigma+w)}}\prod_{\substack{p \ \text{prime} \\ p\mid arsm}} \left(\frac{p}{p+1}\right)dw \\
= &\frac{1}{2\pi i} \left( \prod_{\substack{p \ \text{prime} \\ p\mid ars}} \left(\frac{p}{p+1}\right) \right) \int_{(b)}\Gamma(w)\left(\frac{X^2}{rs}\right)^{w} \zeta(2(\sigma+w))\left(1-\frac{1}{2^{2(\sigma+w)}}\right) \prod_{\substack{p \ \text{prime} \\ p\nmid 2ars}} \left(1-\frac{1}{p^{2(\sigma+w)}(p+1)}\right) dw.
\end{aligned}
\end{equation}

Moving the line of integration from $(b)$ to $(-\gamma)$ for some $\gamma < \sigma - \frac{1}{2}$, similarly to what we did in \eqref{eq:residue} following \cite[Page 465]{Soundararajan_2000}, we see that
\begin{align*}
    &\int_{(b)}\Gamma(w)\left(\frac{X^2}{rs}\right)^{w} \zeta(2(\sigma+w))\left(1-\frac{1}{2^{2(\sigma+w)}}\right) \prod_{\substack{p \ \text{prime} \\ p\nmid 2ars}} \left(1-\frac{1}{p^{2(\sigma+w)}(p+1)}\right) dw \\
    = &\Res_{w=0} \left( \Gamma(w)\left(\frac{X^2}{rs}\right)^{w}\zeta(2(\sigma+w))\left(1-\frac{1}{2^{2(\sigma+w)}}\right)\prod_{\substack{p \ \text{prime} \\ p\nmid 2ars}} \left(1-\frac{1}{p^{2(\sigma+w)}(p+1)}\right) \right) + O\left( \left( \frac{rs}{X^2} \right)^\gamma \right) \\
    = &\zeta(2\sigma)\left(1-\frac{1}{2^{2\sigma}}\right) \prod_{\substack{p \ \text{prime} \\ p\nmid 2ars}} \left(1-\frac{1}{p^{2\sigma}(p+1)}\right) + O\left( \left( \frac{rs}{X^2} \right)^\gamma \right). 
\end{align*}
Since $rs \leq (X/a)^2$, this implies that \eqref{sum chi} is asymptotically equivalent to 
\[
\frac{3X}{8\zeta(2)}\zeta(2\sigma)\left(1-\frac{1}{2^{2\sigma}}\right)\sum_{\substack{a,r,s\\ar,as\leq X\\(a,r)=(a,s)=(r,s)=1}}
\mu(a)^2f(a)^2\frac{\mu(r)f(r)\mu(s)f(s)}{(rs)^{\sigma}}
\prod_{\substack{p \ \text{prime} \\ p\mid ars}} \left(\frac{p}{p+1}\right)\prod_{\substack{p \ \text{prime} \\ p\nmid 2ars}} \left(1-\frac{1}{p^{2\sigma}(p+1)}\right).
\]
We can now extend the last sum over $a,r,s$ to run over all the integers, since \cref{rankin's trick} with $D=1$ shows that the tail of this sum is little-$o$ of the right-hand side of \eqref{eq M1 approx}.
Then, we see by multiplicativity that there exists a constant $c_9(\sigma)$, depending only on $\sigma$, such that
\begin{align*}
    &\frac{3X}{8\zeta(2)}\zeta(2\sigma)\left(1-\frac{1}{2^{2\sigma}}\right)\sum_{\substack{a,r,s\\(a,r)=(a,s)=(r,s)=1}}
\mu(a)^2f(a)^2\frac{\mu(r)f(r)\mu(s)f(s)}{(rs)^{\sigma}}\prod_{\substack{p \ \text{prime} \\ p\mid ars}}\left(\frac{p}{p+1}\right)\prod_{\substack{p \ \text{prime} \\ p\nmid 2ars}}\left(1-\frac{1}{p^{2\sigma}(p+1)}\right) \\
= &c_9(\sigma) X \prod_p \left(1+f(p)^2- 2\frac{f(p)}{p^{\sigma}}\right), 
\end{align*}
and this yields the desired result.
\end{proof}

The following theorem gives a lower bound for $M_1(R,X,\sigma)$.

\begin{theorem}\label{M1alpha}
Let $\sigma \in \left( \frac{1}{2},1 \right)$. Then, there exist two positive constants $c_{14}(\sigma)$ and $c_{15}(\sigma)$ such that
\begin{equation*}
M_1(R,X,\sigma)
\geq c_{14}(\sigma) X\prod_{p\text{ prime}}\left(1+f(p)^2-2\frac{f(p)}{p^{\sigma}}\right),
\end{equation*}
for every $X \geq c_{15}(\sigma)$, where $M_1(R,X,\sigma)$ is defined as in \eqref{eq:FirstMoment_right}.
\end{theorem}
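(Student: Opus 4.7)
The plan is to combine \cref{L-function approximation} and \cref{M1 approximation} in a straightforward triangle inequality/Cauchy-Schwarz argument. Writing
\[
M_1(R,X,\sigma) = \sum_{\frac{X}{8}\leq d\leq \frac{5X}{16}}\mu(2d)^2R(8d)^2\sum_{n=1}^{\infty}\frac{\chi_{8d}(n)}{n^\sigma} e^{-\frac{n}{X^2}} + \mathcal{E}(X,\sigma),
\]
the main term is handled directly by \cref{M1 approximation}, which gives the asymptotic
\[
c_{13}(\sigma) X\prod_{p\text{ prime}}\left(1+f(p)^2-2\frac{f(p)}{p^{\sigma}}\right).
\]
The task is therefore to show that $\mathcal{E}(X,\sigma)$ is little-$o$ of this quantity, for every sufficiently large $X$.

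To bound $\mathcal{E}(X,\sigma)$, I would apply the Cauchy-Schwarz inequality to write
\[
\lvert\mathcal{E}(X,\sigma)\rvert \leq \left(\sum_{\frac{X}{8}\leq d\leq \frac{5X}{16}} \mu(2d)^2 R(8d)^2\right)^{\!\!1/2} \!\!\left(\sum_{\frac{X}{8}\leq d\leq \frac{5X}{16}}\mu(2d)^2R(8d)^2\left|L(\sigma,\chi_{8d})-\sum_{n=1}^{\infty}\frac{\chi_{8d}(n)}{n^\sigma} e^{-\frac{n}{X^2}}\right|^2\right)^{\!\!1/2}.
\]
The second factor is bounded by \cref{L-function approximation} for any $\delta\in(0,\sigma-\tfrac12)$. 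The first factor requires a separate computation: expanding $R(8d)^2 = \sum_{l_1,l_2\leq X} r(l_1)r(l_2)\chi_{8d}(l_1 l_2)$, applying Pólya-Vinogradov to extract the contribution of squares, and performing the same $(a,r,s,m)$ substitution used in \eqref{equ3} (as in \cite[Lemma~3.2]{Soundararajan_2008}), one obtains an asymptotic of the form $O\bigl(X\prod_p(1+f(p)^2)\bigr)$, with the tail being controlled by \cref{rankin's trick} applied with $D=0$. This step is routine but must be carried out.

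The main obstacle, and the step that drives the choice of $\delta$, is the final comparison of the three Euler products appearing in the error estimate with the one appearing in the main term. Concretely, one needs to verify that
\[
X^{-2\delta}\prod_{p}\bigl(1+f(p)^2\bigr)^{1/2}\prod_{p}\!\left(1+f(p)^2+\frac{4f(p)}{p^{\sigma-\delta}}\right)^{\!\!1/2} = o\!\left(\prod_{p}\!\left(1+f(p)^2-\frac{2f(p)}{p^{\sigma}}\right)\right).
\]
Taking logarithms and using that $f(p) = Lp^{-\sigma}$ is supported on $p \geq L^{1/\sigma}$ with $L=\sqrt{\log X}$, the problem reduces to estimating sums of the form $\sum_p f(p) p^{-\sigma+\delta}$ via partial summation and the prime number theorem. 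The key observation is that these sums are of order $L^{(1-\sigma+\delta)/\sigma}/\log L$, which is a power of $\log X$ strictly smaller than $\log X$ itself whenever $\delta < 3\sigma-1$; since $\sigma>\tfrac12$ the admissible range $\delta\in(0,\sigma-\tfrac12)$ lies inside this region. The factor $X^{-2\delta}$ therefore wins over the Euler product growth, and the desired lower bound on $M_1(R,X,\sigma)$ follows with $c_{14}(\sigma) = \tfrac{1}{2}c_{13}(\sigma)$ for $X\geq c_{15}(\sigma)$.
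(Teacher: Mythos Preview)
Your approach is correct and follows the same overall strategy as the paper: decompose $M_1(R,X,\sigma)$ into the approximated sum from \cref{M1 approximation} plus an error $\mathcal{E}(X,\sigma)$, and show the error is negligible by comparing Euler products. The one difference is in how the error is controlled. The paper invokes directly the ``In particular'' clause of \cref{L-function approximation}, which already bounds the \emph{linear} error $\lvert\mathcal{E}(X,\sigma)\rvert$ by $c_8(\sigma,\delta)\,X^{1-4\delta}\prod_p\bigl(1+f(p)^2+4f(p)/p^{\sigma-\delta}\bigr)$; the remaining task is then only to verify that $X^{-4\delta}$ times the ratio of this Euler product to the main-term product is $o(1)$, which is exactly the computation in \eqref{asymM1}. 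You instead start from the \emph{squared} error bound in \cref{L-function approximation} and apply Cauchy--Schwarz, which forces you to estimate $\sum_d \mu(2d)^2 R(8d)^2$ separately. Both routes lead to the same Euler-product comparison (up to an immaterial $X^{-2\delta}$ versus $X^{-4\delta}$), and your final analysis of the prime sums via partial summation is correct. Your version has the merit of being more self-contained, since the ``In particular'' clause of \cref{L-function approximation} itself tacitly relies on a Cauchy--Schwarz step together with a bound on $\sum_d R(8d)^2$ of precisely the type you supply.
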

\begin{proof}
By \cref{L-function approximation} and \cref{M1 approximation} it is enough to prove that
\begin{equation} \label{eq:right_firstmoment_final}
    \frac{1}{X^{4\delta}}\prod_{p\text{ prime}}\frac{\left(1+f(p)^2+4\frac{f(p)}{p^{\sigma-\delta}}\right)}{\left(1+f(p)^2-2\frac{f(p)}{p^{\sigma}}\right)}=o(1),
\end{equation}
for some positive real number $\delta$ satisfying $\delta<\sigma-\frac{1}{2}$.
Notice that 
\begin{equation}\label{asymM1}
\begin{aligned}
\prod_{p \ \text{prime}} \frac{\left(1+f(p)^2+4\frac{f(p)}{p^{\sigma-\delta}}\right)}{\left(1+f(p)^2-2\frac{f(p)}{p^{\sigma}}\right)}
&=\prod_{p \ \text{prime}}\left(1+\frac{4\frac{f(p)}{p^{\sigma-\delta}}+2\frac{f(p)}{p^\sigma}}{1+f(p)^2-2\frac{f(p)}{p^{\sigma}}}\right) \\
&\ll \exp\left((6+o(1))L\sum_{\substack{p \ \text{prime} \\ p\geq L^{\frac{1}{\sigma}}}}\frac{1}{p^{2\sigma-\delta}}\right) \\ &= \exp\left((6+o(1))\frac{L}{ L^{\frac{2\sigma-\delta-1}{\sigma}}\log( L^{\frac{1}{\sigma}})}\right)\\ &=\exp\left(\left(12\sigma+o(1)\right)\frac{\log^{\frac{1+\delta-\sigma}{2\sigma}} X}{\log\log X}\right).
\end{aligned}
\end{equation}
Notice that $1+\delta-\sigma<1$ and therefore $\exp\left(\left(12\sigma+o(1)\right)\frac{\log^{\frac{1+\delta-\sigma}{2\sigma}} X}{\log\log X}\right) = o(X^{\varepsilon})$ for every $\varepsilon>0$, and in particular for $\varepsilon=4\delta$, this computation shows that \eqref{eq:right_firstmoment_final} holds true, and therefore concludes the proof.
\end{proof}
\begin{remark}
    Note the infinite sum $\sum_{p \ \text{prime}, p \geq L^{1/\sigma}} p^{-2 \sigma}$ appearing in \eqref{asymM1} diverges when $\sigma = \frac{1}{2}$.
    This shows, in particular, that the proofs appearing in this section cannot be ``continuously deformed'' to reach the value $\sigma = \frac{1}{2}$. This can also be noticed in the choice of the resonator coefficients. Indeed, in this section, the multiplicative function $f$ vanishes only on finitely many primes and is non-zero for each prime $p \geq L^{\frac{1}{\sigma}}$. On the other hand, we saw in the previous section that when $\sigma = \frac{1}{2}$, we need to consider a multiplicative function $f$ which vanishes at \textit{all but finitely many} primes. More precisely, $f(p) = 0$ unless $L^{\frac{1}{\sigma}} = L^2 \leq p \leq \exp(\log^2(L)) = \exp(\log^{\frac{1}{\sigma}}(L))$, where $L = \sqrt{\log N \log\log N}$. Therefore, the support of the function $f$ and the shape of the parameter $L$ in the case $\sigma = \frac{1}{2}$ are pretty different from the ones considered in the present section.
\end{remark}
\textbf{Second moment:} Using a result of Sono \cite{sono2020second}, we find the asymptotic behavior for the second moment $M_2(R,X,\sigma)$.

\begin{proposition}\label{M2alpha}
Let $\sigma \in \left( \frac{1}{2}, 1 \right)$. Then, there exist two positive constants $c_{16}(\sigma)$ and $c_{17}(\sigma)$ such that
\begin{equation}\label{M2 assymptotic}
M_2(R,X,\sigma) \leq c_{16}(\sigma) X\prod_{p \text{ prime}} \left(1+f(p)^2-4\frac{f(p)}{p^{\sigma}}\right),
\end{equation}  
for every $X \geq c_{17}(\sigma)$.
\end{proposition}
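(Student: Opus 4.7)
The plan is to mirror the structure of the proof of \cref{SecondMoment}, with Soundararajan's integral representation of $L(1/2, \chi_{8d})^2$ replaced by the asymptotic formula of Sono \cite{sono2020second} for the second moment of quadratic Dirichlet $L$-functions twisted by $\chi_{8d}(l)$. First, I would introduce the smooth cutoff $\Phi$ defined in \eqref{eq:Phi}: since the range of summation in \eqref{eq:SecondMoment_right} corresponds to $8d/X \in [1, 5/2] \subset (1/2, 3)$, where $\Phi$ is bounded below by a positive absolute constant, one obtains $M_2(R, X, \sigma) \ll \sum_{d} \mu(2d)^2 R(8d)^2 L(\sigma, \chi_{8d})^2 \Phi(8d/X)$. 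Expanding $R(8d)^2$ via \eqref{resonator2} and interchanging the order of summation then yields $M_2(R, X, \sigma) \ll \sum_{n_1, n_2 \leq X} r(n_1) r(n_2) \cdot M_{\sigma - 1/2, \sigma - 1/2}(n_1 n_2; X/8)$, where $M_{\alpha_1, \alpha_2}$ is the function defined in \eqref{eq:M_Sono}.

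Next, I would apply Sono's asymptotic formula for $M_{\sigma - 1/2, \sigma - 1/2}(l; X/8)$. When $\sigma \in (1/2, 1)$ both parameters are strictly positive, so the leading behavior is of the form $c(\sigma) X \cdot (l_1 l_2)^{-\sigma} \tau(l_1 l_2)$ times a product over primes dividing $l$ coming from the local factors of the function $A$ defined in \eqref{eq:A}, as can be seen from the explicit main term in Sono's theorem (specialized to real, equal shifts). After the substitution $(n_1, n_2) \to (a, r, s)$ with $a = \gcd(n_1, n_2)$, $r = n_1/a$, $s = n_2/a$, following \eqref{equ3}, the squarefree part of $n_1 n_2$ becomes $rs$ and the squarefull part $a^2$. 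Extending the summation over $(a, r, s)$ to run over all integers by means of \cref{rankin's trick} with $D = 2$ (with the tail being little-$o$ of the main term), and recognizing the remaining multiplicative sum as an Euler product via a computation analogous to \eqref{productexpression}, one would obtain the desired bound $M_2(R, X, \sigma) \leq c_{16}(\sigma) X \prod_p (1 + f(p)^2 - 4 f(p)/p^\sigma)$.

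The principal obstacle lies in the sign of the term linear in $f(p)$ in the Euler product above. A naive application of the absolute-value bound \eqref{eq:bound_M_Sono} already used in the proof of \cref{L-function approximation} would instead yield the factor $(1 + f(p)^2 + 4 f(p)/p^\sigma)$, which is too large and would defeat the application to \cref{Bogomolov2} via Titu's inequality. The crucial minus sign emerges only when one exploits the main term of Sono's formula and combines it with the M\"obius cancellations provided by the factor $\mu(r) \mu(s)$ in $r(n_1) r(n_2)$, in the same spirit as the key identity \eqref{eq:multiplicative_1} in the proof of \cref{SecondMoment}. This sign tracking, together with controlling the error term in Sono's formula against the main term, constitutes the central arithmetic content of the argument.
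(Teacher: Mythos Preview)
Your proposal is correct and follows essentially the same route as the paper: expand $R(8d)^2$, identify the inner sum as Sono's $M_{\sigma-1/2,\sigma-1/2}(a^2rs;X/8)$, apply his asymptotic formula together with \eqref{Sound Lemma 5.1}, extend the $(a,r,s)$-sum via \cref{rankin's trick} with $D=2$, and read off the Euler product. Your explicit handling of the smoothing by $\Phi$ is in fact slightly more careful than the paper's, which passes directly to Sono's $M$ without commenting on the cutoff; the one small device you do not mention is that after applying \eqref{Sound Lemma 5.1} the paper simply invokes the crude bound $\eta(2\sigma;a^2rs)\leq 1$ to drop the local arithmetic factors before forming the Euler product, which avoids having to track those factors through the multiplicative identity.
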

\begin{proof}
By substituting the definitions of $R(8d)$ and $r(n)$ into $M_2(R,X,\sigma)$ we have that 
\[
M_2(R,X,\sigma) = \sum_{\substack{a,r,s\\ar,as\leq X\\(a,r)=(a,s)=(r,s)=1}}
\mu(a)^2f(a)^2\mu(r)f(r)\mu(s)f(s)
\sum_{\frac{X}{8}\leq d\leq \frac{5X}{16}}\mu(2d)^2L(\sigma,\chi_{8d})^2\chi_{8d}(a^2rs).
\]  
To compute the inner sum, we notice that 
\[
    \sum_{\frac{X}{8}\leq d\leq \frac{5X}{16}}\mu(2d)^2L(\sigma,\chi_{8d})^2\chi_{8d}(a^2rs) = M_{\sigma-\frac{1}{2},\sigma-\frac{1}{2}}(a^2rs),
\]
where $M_{\alpha_1,\alpha_2}(l)$ is defined as in \eqref{eq:M_Sono}.
This function was studied by Sono, who proved in \cite[Theorem~2.2]{sono2020second} (see also the proof of \cite[Theorem~5.8]{GeLa24}) that
\[
    M_{\sigma - \frac{1}{2},\sigma - \frac{1}{2}}(a^2 r s) \sim c_{18}(\sigma) \frac{X}{(rs)^\sigma} \sum_{m \ \text{odd}} \frac{\tau(r s m^2)}{m^{2 \sigma}} \prod_{\substack{p \ \text{prime}\\p \mid m a r s}} \left( \frac{p}{p+1} \right)
\]
as $X \to +\infty$, where $c_{18}(\sigma)$ is a constant depending only on $\sigma$.
Therefore, we see that
\begin{align*}
    M_2(R,X,\sigma) &\sim c_{18}(\sigma) X \sum_{\substack{a,r,s\\ar,as\leq X\\(a,r)=(a,s)=(r,s)=1}}
\mu(a)^2f(a)^2\frac{\mu(r)f(r)}{r^\sigma}\frac{\mu(s)f(s)}{s^\sigma}
\sum_{m \text{ odd}}\frac{\tau(rsm^2)}{m^{2\sigma}}\prod_{\substack{p \ \text{prime}\\p\mid mars}}\left(\frac{p}{p+1}\right) \\
&= c_{19}(\sigma) X \sum_{\substack{a,r,s\\ar,as\leq X\\(a,r)=(a,s)=(r,s)=1}}
\mu(a)^2f(a)^2\frac{\tau(r)\mu(r)f(r)}{r^\sigma}\frac{\tau(s)\mu(s)f(s)}{s^\sigma}
\eta(2\sigma; a^2rs) , 
\end{align*}
where the second equality follows by an application of \eqref{Sound Lemma 5.1} with $w = \sigma - \frac{1}{2}$, and therefore $c_{19}(\sigma) = c_{18}(\sigma) \cdot \zeta(2 \sigma)^3$.
To conclude, note that the asymptotic behavior of the last sum does not change if we extend it over all the integers $a,r,s$, thanks to \cref{rankin's trick} applied with $D=2$, which shows that the tail of this sum is little-$o$ of the right-hand side of \eqref{M2 assymptotic}. 
Moreover, recall that $\eta(\alpha;l) \leq 1$ for every integer $l$ and every $\alpha > 1$, as follows from the definitions \eqref{eq:eta_p}.
Therefore, there exist two positive constants $c_{16}(\sigma)$ and $c_{17}(\sigma)$ such that for every $X \geq c_{17}(\sigma)$ we have that
\begin{align*}
    M_2(R,X,\sigma) &\leq c_{16}(\sigma) X \sum_{\substack{a,r,s \\(a,r)=(a,s)=(r,s)=1}}
\mu(a)^2f(a)^2\frac{\tau(r)\mu(r)f(r)}{r^\sigma}\frac{\tau(s)\mu(s)f(s)}{s^\sigma} \\
&= c_{16}(\sigma) X\prod_{p\text{ prime}}\left(1+f(p)^2- 2\frac{f(p)\tau(p)}{p^{\sigma}}\right) = c_{16}(\sigma) X\prod_{p\text{ prime}}\left(1+f(p)^2- 4\frac{f(p)}{p^{\sigma}}\right),
\end{align*}
which allows us to conclude.
\end{proof}

\textbf{Final step:} The following corollary shows that for every $\sigma \in \left(\frac{1}{2},1\right)$ the function $h_{\sigma}$ does not have the Bogomolov property.

\begin{proof}[Proof of \cref{Bogomolov2}]
Using \eqref{eq:Titu} we see that
\begin{equation*}
\frac{\displaystyle{\sum_{\frac{X}{16}\leq d\leq \frac{X}{8}}\mu(2d)^2R(8d)^2 L(\sigma,\chi_d) ^2}}{\displaystyle{\sum_{\substack{\frac{X}{16}\leq d\leq \frac{X}{8}\\ L(\sigma,\chi_d)\neq 0 }}\mu(2d)^2R(8d)^2}}\leq \left(\frac{M_2(R,X,\sigma)}{M_1(R,X,\sigma)}\right)^2.  
\end{equation*}
By \cref{M1alpha} and \cref{M2alpha} we have that 
\begin{align*}
\left(\frac{M_2(R,X,\sigma)}{M_1(R,X,\sigma)}\right)^2 &\le
\frac{c_{16}(\sigma)}{c_{14}(\sigma)}\left(\prod_{p\text{ prime}}\frac{1+f(p)^2-4\frac{f(p)}{p^{\sigma}}}{1+f(p)^2-2\frac{f(p)}{p^{\sigma}}}\right)^2
\\
&=\frac{c_{16}(\sigma)}{c_{14}(\sigma)} \exp\left(-
\left(
8\sigma+o(1)\right)\frac{\log^{\frac{1-\sigma}{2\sigma}} X}{\log\log X}\right),
\end{align*}
which gives the desired result.
\end{proof}

\subsection{Random Euler products}    

    In the previous subsection we have proven that for every $\sigma \in \mathbb{R}$ such that $\frac{1}{2} < \sigma < 1$, the restriction of the Dedekind height $h_\sigma \colon \mathcal{N} \to \mathbb{R}$ to the subset $\mathcal{Q} \subseteq \mathcal{N}$ of isomorphism classes of quadratic fields does not satisfy the Bogomolov property. In this particular case, such a result suffices to conclude that $h_\sigma$ does not satisfy the Bogomolov property \textit{tout court}, because $h_\sigma([K]) := \lvert \zeta_K(\sigma) \rvert \geq 0$ for every $[K] \in \mathcal{N}$, and we have proven in the previous section that  $h_\sigma(\mathcal{Q})$ accumulates towards zero. This implies in particular that for every $B > 0$, the set $\{ K \in \mathcal{Q} \colon h_\sigma(K) < B \}$ is non-empty. Since this happens \textit{for every} $B>0$, these sets are, in fact, infinite. However, such a set could have density zero inside the set of all quadratic fields $\mathcal{Q}$, ordered by discriminant. The following proposition, proven by Lamzouri \cite{Lamzouri_2011}, shows that this is not the case and gives an expression for the aforementioned density, which is completely explicit up to a logarithmic error term.
    \begin{proposition}[Lamzouri] \label{prop:Lamzouri}
        Fix $\sigma \in \mathbb{R}$ such that $\frac{1}{2}  < \sigma < 1$. Then, we have that
        \[
            \begin{aligned}
                &\phantom{=} \lim_{X \to +\infty} \frac{\# \{ [K] \in \mathcal{Q} \colon \lvert \Delta_K \rvert \leq X, \ h_\sigma(K) \leq B \}}{\# \{ [K] \in \mathcal{Q} \colon \lvert \Delta_K \rvert \leq X\}} \\ &= \exp\left( - c_{20}(\sigma) \log\left( \frac{\zeta(\sigma)}{B} \right)^{\frac{1}{1-\sigma}} \log\left(\log\left( \frac{\zeta(\sigma)}{B} \right)\right)^{\frac{\sigma}{1-\sigma}} \left( 1 + O\left( \log\log\left(\frac{\zeta(\sigma)}{B}\right)^{-\frac{1}{2}} \right) \right) \right),
            \end{aligned}
        \]
        as $B \to 0$, where $c_{20}(\sigma)$ is the constant defined in \eqref{eq:c20}. 
    \end{proposition}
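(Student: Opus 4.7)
The plan is to express $\zeta_K(\sigma)$ in terms of a quadratic Dirichlet $L$-function, then to invoke Lamzouri's main distributional theorem for these $L$-functions, and finally to read off the asymptotic of the left tail from a saddle-point analysis of a random Euler product model.

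By the Artin factorization \eqref{Artin formalism}, for every quadratic field $K = \mathbb{Q}(\sqrt{d})$ with fundamental discriminant $d = \Delta_K$ we have $\zeta_K(\sigma) = \zeta(\sigma)\,L(\sigma,\chi_d)$. Since $\zeta(\sigma)$ is a fixed nonzero real constant on $\left(\tfrac{1}{2},1\right)$, the condition $h_\sigma([K]) \leq B$ is equivalent to $|L(\sigma,\chi_d)| \leq B/|\zeta(\sigma)|$, and combined with the classical asymptotic $\#\{[K] \in \mathcal{Q} \colon |\Delta_K| \leq X\} \sim (6/\pi^2)\,X$ this reduces the ratio in the statement to the limiting proportion, as $X \to +\infty$, of fundamental discriminants $|d| \leq X$ with $|L(\sigma,\chi_d)| \leq B/|\zeta(\sigma)|$. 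Lamzouri compares this empirical distribution with that of the random Euler product $L(\sigma,\mathbb{X}) := \prod_p(1 - \mathbb{X}(p)p^{-\sigma})^{-1}$, where the $\{\mathbb{X}(p)\}_p$ are independent random variables with $\mathbb{X}(p) = \pm 1$ of probability $p/(2(p+1))$ each and $\mathbb{X}(p) = 0$ of probability $1/(p+1)$, modelling the law of $\chi_d(p)$ for a ``random'' fundamental discriminant $d$. His main approximation theorem asserts that the two distribution functions agree uniformly at a polylogarithmic rate as $X \to +\infty$, so it suffices to analyse the left tail $\mathbb{P}(L(\sigma,\mathbb{X}) \leq t)$ as $t \to 0^+$.

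For this tail I would apply saddle-point / G\"artner--Ellis methods to the Laplace transform $\Psi(s) := \mathbb{E}[L(\sigma,\mathbb{X})^{-s}]$. Independence across primes factors $\Psi(s) = \prod_p \mathbb{E}[(1-\mathbb{X}(p)p^{-\sigma})^s]$; computing each local factor explicitly, and passing from the sum over primes to a weighted integral via Mertens's theorem followed by the substitution $y = s p^{-\sigma}$, yields $\log\Psi(s) \sim I(\sigma)\,s^{1/\sigma}(\log s)^{-1}$ as $s \to +\infty$, where $I(\sigma) := \int_0^{+\infty}\log\cosh(y)\,y^{-1/\sigma}\,\tfrac{dy}{y}$ is the integral entering the definition of $c_{20}(\sigma)$ in \eqref{eq:c20}. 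The Chernoff-type inequality then gives $-\log\mathbb{P}(L(\sigma,\mathbb{X}) \leq e^{-T}) \sim \sup_{s > 0}(sT - \log\Psi(s))$ as $T \to +\infty$; the saddle-point equation $\Psi'(s^\ast)/\Psi(s^\ast) = T$ places $s^\ast$ at order $T^{\sigma/(1-\sigma)}(\log T)^{\sigma/(1-\sigma)}$, and back-substitution, after simplification of the algebra involving $\sigma$, $(1-\sigma)$ and $I(\sigma)$, recovers exactly the constant $c_{20}(\sigma)$ and the announced rate of decay. Setting $T = \log(|\zeta(\sigma)|/B)$ gives the asymptotic of the proposition.

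The principal obstacle is carrying the saddle-point computation to the precision required to secure both the leading $\log(\zeta(\sigma)/B)^{1/(1-\sigma)}$ term and the secondary $\log\log(\zeta(\sigma)/B)^{\sigma/(1-\sigma)}$ factor. This requires a two-term expansion of $s^\ast$ and of $\log\Psi$ near it, a quantitative Mertens-type estimate controlling the sum-to-integral passage uniformly in $s$, and verification that the polylogarithmic error in Lamzouri's distributional approximation is uniformly smaller than the resulting tail bound, which is delicate because $T$ grows as $B$ shrinks.
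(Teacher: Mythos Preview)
Your outline is correct and essentially reconstructs the content of Lamzouri's paper, whereas the present paper's proof is purely a citation: it invokes \cite[Theorem~1.6]{Lamzouri_2011} to pass from the empirical distribution of $L(\sigma,\chi_d)$ to the random Euler product, observes (following \cite[Remark~4]{Lamzouri_2011}) that the symmetry $\mathbf{X}_p \mapsto -\mathbf{X}_p$ forces the upper and lower tails of $\lvert L(\sigma,\mathbf{X})\rvert$ to coincide, and then reads off the tail asymptotic directly from \cite[Theorem~1.9]{Lamzouri_2011}. No saddle-point computation is performed in the paper itself.

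Two remarks on the comparison. First, your probabilistic model is the three-valued one ($\mathbb{X}(p)\in\{-1,0,1\}$ with $\mathbb{P}(\mathbb{X}(p)=0)=1/(p+1)$), while the paper quotes Lamzouri's two-valued model ($\mathbf{X}_p=\pm 1$ with probability $\tfrac12$); both appear in the literature and give the same leading-order cumulant $\log\Psi(s)\sim I(\sigma)s^{1/\sigma}/\log s$, since the extra mass at $0$ contributes only a convergent $O(1/p)$ correction to each local factor. Second, your Legendre-transform computation is right on the nose: solving $\tfrac{d}{ds}\log\Psi(s)=T$ gives $\log\Psi(s^\ast)\sim\sigma s^\ast T$, hence $s^\ast T-\log\Psi(s^\ast)\sim(1-\sigma)s^\ast T$, and substituting $s^\ast$ recovers exactly $c_{20}(\sigma)T^{1/(1-\sigma)}(\log T)^{\sigma/(1-\sigma)}$. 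What your sketch buys is self-containment and an explanation of where $c_{20}(\sigma)$ comes from; what the paper's approach buys is brevity and the avoidance of the delicate second-order analysis you flag at the end, which is already handled inside Lamzouri's proofs.
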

    \begin{proof}
        Lamzouri showed in \cite[Theorem~1.6]{Lamzouri_2011} that the distribution of the $L$-values $L(\sigma,\chi_d)$ as $d$ varies amongst all the fundamental discriminants resembles that of a random Euler product 
        \[ L(\sigma,\mathbf{X}) := \prod_{p \ \text{prime}} \left( 1- \frac{\mathbf{X}_p}{p^\sigma} \right)^{-1},\] where $\{\mathbf{X}_p\}_p$ denotes a family of independent random variables, each of which assumes the values $1$ and $-1$ with probability $\frac{1}{2}$. This resemblance is particularly true for the moments of these distributions, as proven in \cite[Proposition~5.2]{Lamzouri_2011}, and therefore for the tails of the absolute values of these distributions. Since the random variables $\mathbf{X}_p$ are symmetric, \textit{i.e.} $\mathbf{X}_p$ and $-\mathbf{X}_p$ are identically distributed, the upper and lower tails of the random variable $\lvert L(\sigma,\mathbf{X}) \rvert$ coincide, as observed in \cite[Remark~4]{Lamzouri_2011}, and their asymptotic value is determined by \cite[Theorem~1.9]{Lamzouri_2011}. In fact, this result, as well as \cite[Theorem~1.6]{Lamzouri_2011}, is more precise, providing as well an explicit error term for the distribution of the truncated Euler products \[\prod_{\substack{p \ \text{prime} \\ p \leq y}} (1-\mathbf{X}_p p^{-\sigma})^{-1},\] for any fixed $y>0$. Putting everything together, and letting $y \to +\infty$, we obtain a proof of \cref{prop:Lamzouri}.
    \end{proof}

    \begin{remark}
        As we already observed, we have that $\sup(h_\sigma(\mathcal{Q})) = +\infty$ for every $\sigma \in \mathbb{R}$ such that $\frac{1}{2} < \sigma < 1$. Moreover, \cite[Theorem~1.6]{Lamzouri_2011} shows that
        \[
            \begin{aligned}
                &\phantom{=} \lim_{X \to +\infty} \frac{\# \{ [K] \in \mathcal{Q} \colon \lvert \Delta_K \rvert \leq X, \ h_\sigma(K) \geq B \}}{\# \{ [K] \in \mathcal{Q} \colon \lvert \Delta_K \rvert \leq X\}} \\ &= \exp\left( - c_{20}(\sigma) \log\left( \frac{B}{\zeta(\sigma)} \right)^{\frac{1}{1-\sigma}} \log\left(\log\left( \frac{B}{\zeta(\sigma)} \right)\right)^{\frac{\sigma}{1-\sigma}} \left( 1 + O\left( \log\log\left(\frac{B}{\zeta(\sigma)}\right)^{-\frac{1}{2}} \right) \right) \right),
            \end{aligned}
        \]
        for every $B > 0$ which is sufficiently big with respect to $\sigma$.
    \end{remark}

    \begin{remark}
        We underline that \cref{prop:Lamzouri} does not assume the validity of the Generalized Riemann Hypothesis, even though other parts of Lamzouri's article \cite{Lamzouri_2011} do. 
    \end{remark}

    What happens when $\sigma = 1$? In this case, the sets $\{[K] \in \mathcal{Q} \colon h_1(K) \leq B\}$ have again positive density, as was shown by Granville and Soundararajan \cite{Granville_Soundararajan_2003}.

    \begin{proposition}[Granville \& Soundararajan] \label{prop:Granville_Soundararajan}
        When $B \to 0$ one has that
        \[
            \begin{aligned}
                &\phantom{=} \lim_{X \to +\infty} \frac{\# \{ [K] \in \mathcal{Q} \colon \lvert \Delta_K \rvert \leq X, \ h_1(K) \leq B \}}{\# \{ [K] \in \mathcal{Q} \colon \lvert \Delta_K \rvert \leq X\}} \\ &= \exp\left( - c_{21} \exp\left( \frac{\zeta(2)}{B e^{\gamma_0}} \right) \left( \frac{B e^{\gamma_0}}{\zeta(2)} \right) + O\left( \exp\left( \frac{\zeta(2)}{B e^{\gamma_0}} \right) \left( \frac{B e^{\gamma_0}}{\zeta(2)} \right)^2 \right) \right),
            \end{aligned}
        \]
        where $c_{21}$ is the absolute constant defined in \eqref{eq:21}, while $\gamma_0$ is the Euler-Mascheroni constant, defined by setting $j=0$ in \eqref{eq:Stieltjes}.
    \end{proposition}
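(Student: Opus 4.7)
The plan is to reduce the statement to the distribution of the values $|L(1,\chi_d)|$ as $d$ varies over fundamental discriminants, and then invoke the tail asymptotics proven by Granville and Soundararajan in \cite{Granville_Soundararajan_2003}. First, for every quadratic field $K=\mathbb{Q}(\sqrt{d})$, the Artin factorization $\zeta_K(s)=\zeta(s) L(s,\chi_d)$ combined with the fact that $\zeta$ has a simple pole of residue $1$ at $s=1$ and $L(s,\chi_d)$ is entire gives $\zeta_K^{\ast}(1)=L(1,\chi_d)$, hence $h_1([K])=|L(1,\chi_d)|$. Second, quadratic fields are parametrized (up to isomorphism) by fundamental discriminants $d\neq 1$, and a standard squarefree-sieve computation yields $\#\{[K]\in\mathcal{Q}\colon |\Delta_K|\leq X\}\sim \frac{6}{\pi^2}X$, so the density featured in the proposition equals the natural density of those fundamental discriminants $|d|\leq X$ for which $|L(1,\chi_d)|\leq B$, as $X\to +\infty$.

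The next step is to invoke the probabilistic model. Let $\{\mathbf{X}_p\}_p$ be independent Steinhaus/Rademacher random variables (each taking $\pm 1$ with probability $1/2$) and consider the almost-surely convergent random Euler product
\begin{equation*}
L(1,\mathbf{X}) \;:=\; \prod_p \left(1-\frac{\mathbf{X}_p}{p}\right)^{-1}.
\end{equation*}
Granville and Soundararajan proved, via high-moment matching between $L(1,\chi_d)$ and $L(1,\mathbf{X})$ together with a saddle-point analysis of the Laplace transform of $\log|L(1,\mathbf{X})|$, that the lower tail of $|L(1,\chi_d)|$ among fundamental discriminants $|d|\leq X$ coincides (in the limit $X\to+\infty$, uniformly in $B$ within the admissible range) with the lower tail of the random variable $|L(1,\mathbf{X})|$.

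The third step is to read off the asymptotics of the random-model tail. Writing $\log L(1,\mathbf{X}) = -\sum_p \log(1-\mathbf{X}_p/p)$ and linearising so that the logarithms behave additively, the cumulant generating function of $\log L(1,\mathbf{X})$ is computed (up to acceptable errors) to be $\sum_p \log\cosh(s/p)$; a saddle-point evaluation of the inverse Laplace transform at the exponentially small value $B$ produces a leading double-exponential factor $\exp(\zeta(2)/(Be^{\gamma_0}))$ with multiplicative constant $c_{21}$ arising precisely from the regularised integral
\begin{equation*}
c_{21}=\exp\!\left(\int_1^{+\infty}\!(1-\tanh(x))\frac{dx}{x}-\int_0^1\!\tanh(x)\frac{dx}{x}\right),
\end{equation*}
as in \eqref{eq:21}, which is the $x\to\infty$ asymptotic of the Mellin transform $\int_0^{+\infty}\log\cosh(sx)\,x^{-2}dx$ after subtracting the singular contribution. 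The factor $e^{\gamma_0}$ enters through Mertens' theorem when passing from finite to infinite products, and $\zeta(2)$ comes from $\sum_p 1/p^2$ corrections in the second-order expansion of $\log\cosh$.

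The main obstacle is the quantitative transfer from the arithmetic random variables $L(1,\chi_d)$ to their probabilistic model $L(1,\mathbf{X})$: one must show that the moments $\frac{1}{\#\{|d|\leq X\}}\sum_{|d|\leq X}|L(1,\chi_d)|^{-k}$ agree with $\mathbb{E}[|L(1,\mathbf{X})|^{-k}]$ up to an admissible error for $k$ as large as $(\log(1/B))^{1/(1-\sigma)}$; this is exactly the content of the moment estimates of \cite{Granville_Soundararajan_2003}, which rely on Pólya–Vinogradov to truncate the Euler product of $L(1,\chi_d)$ at a small power of $X$ and on quadratic reciprocity and smoothing to evaluate the truncated moments. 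Once this transfer is in hand, the stated error term $O\!\bigl(\exp(\zeta(2)/(Be^{\gamma_0}))(Be^{\gamma_0}/\zeta(2))^2\bigr)$ follows from the second-order term in the saddle-point expansion of the Laplace transform of $\log|L(1,\mathbf{X})|$, concluding the proof.
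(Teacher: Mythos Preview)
Your approach is essentially the same as the paper's: both reduce $h_1([K])$ to $|L(1,\chi_d)|$ via the Artin factorization and then invoke the results of \cite{Granville_Soundararajan_2003}, with the paper citing specifically their Proposition~1 (tail asymptotics for the random Euler product) and Theorem~2 (transfer from $L(1,\chi_d)$ to the random model). Your write-up is more expansive about the mechanics behind those results, which is fine, but one slip should be corrected: in the final paragraph you say the moments must match for $k$ as large as $(\log(1/B))^{1/(1-\sigma)}$, which is meaningless here since $\sigma=1$; the relevant range in \cite{Granville_Soundararajan_2003} is rather $k$ of size roughly $\exp(\zeta(2)/(Be^{\gamma_0}))$, reflecting the double-exponential tail. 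Also, a minor imprecision: $\zeta(2)$ enters not through $\sum_p 1/p^2$ but through $\prod_p(1-1/p^2)^{-1}$, which arises when comparing $\prod_{p\le y}(1+1/p)^{-1}$ to Mertens' product $\prod_{p\le y}(1-1/p)$.
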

\begin{proof}
    This result can be obtained by combining \cite[Proposition~1]{Granville_Soundararajan_2003}, which computes the distribution of some random Euler products evaluated at $s = 1$, together with \cite[Theorem~2]{Granville_Soundararajan_2003}, which implies, letting $x \to +\infty$, that the values of Dirichlet $L$-functions $L(1,\chi_d)$ have exactly the distribution of the random Euler products considered in \cite[Proposition~1]{Granville_Soundararajan_2003}.
\end{proof}

To conclude this section, let us note once more that combining \cref{prop:Lamzouri} and \cref{prop:Granville_Soundararajan} gives an alternative proof of \cref{Bogomolov2}, which moreover has the advantage of showing that the density of the set of quadratic fields $K$ such that $\lvert \zeta_K(\sigma) \rvert \leq B$ is always positive, for every $B > 0$ and every real number $\sigma$ such that $\frac{1}{2} < \sigma \leq 1$. 

\begin{remark}
    Note that
    \[
        \lim_{\sigma \to \frac{1}{2}} \int_0^{+\infty} \frac{\log(\cosh(x))}{x^{1+\frac{1}{\sigma}}} dx = \lim_{\sigma \to 1} \int_0^{+\infty} \frac{\log(\cosh(x))}{x^{1+\frac{1}{\sigma}}} dx = +\infty,
    \]
    which implies that $c_{20}(\sigma) \to 0$ as $\sigma \to \frac{1}{2}$, while $c_{20}(\sigma) \to +\infty$ as $\sigma \to 1$, where $c_{20}(\sigma)$ is the constant defined in \eqref{eq:c20}, which appears in \cref{prop:Lamzouri}. This does not imply that the sets $\{[K] \in \mathcal{Q} \colon h_{1/2}(K) \leq B \}$ have density zero,
    but it is just another indication of the fact that Lamzouri's proof cannot be ``continuously deformed'' to $\sigma = \frac{1}{2}$ or to $\sigma = 1$.
\end{remark}


    \section{Outside the critical strip}
    \label{sec:critical_right}
    
    This section aims to use explicit bounds for the Dedekind $\zeta$-functions $\zeta_K(s)$ in the region of absolute convergence, as well as their functional equations, to prove that for every $s \in \mathbb{C}$ the Dedekind height $h_s$ defined in \eqref{eq:Dedekind_height} does not have the Bogomolov (or Northcott) property if $s = \sigma \in \mathbb{R}$ and $\sigma > 1$. On the other hand, we will show that if we restrict this function to classes of number fields having bounded degree, then $h_s$ has even the Northcott property if $s \in \mathbb{C}$ is any complex number such that $\mathrm{Re}(s) \leq 0$. This generalizes some results of \cite{Pazuki_Pengo_2024}, which dealt with integers $s \in \mathbb{Z}$, and some results proven by Généreux and Lalín \cite{GeLa24}.

    \subsection{The right of the critical strip}\label{bigger 1}

    In this section, we study the Diophantine properties of the special values of Dedekind zeta functions at real numbers $\sigma > 1$. Since the Euler product defining $\zeta_K(\sigma)$ converges absolutely on this half line, we have $\zeta_K^\ast(\sigma) = \zeta_K(\sigma)$ for every $\sigma > 1$. In this case, \cref{s>1} claims that the Dedekind height $h_\sigma$ defined in \eqref{eq:Dedekind_height} does not satisfy the Bogomolov property, henceforth generalizing \cite[Theorem~3.2]{GeLa24}, which proved that it does not have the Northcott property. We devote the rest of this sub-section to giving proof of this result.
    
    \begin{proof}[Proof of \cref{s>1}]
        Note first of all that
        \[
            h_\sigma([K]) = \lvert \zeta_K^\ast(\sigma) \rvert = \lvert \zeta_K(\sigma) \rvert = \zeta_K(\sigma) = 1 + \sum_{n=2}^{+\infty} \frac{\# \{ I \subseteq \mathcal{O}_K \colon \lvert \mathcal{O}_K/I \rvert = n \}}{n^\sigma}
        \]
        for every $\sigma > 1$ and $[K] \in \mathcal{N}$. This shows immediately that $h_\sigma(\mathcal{N}) \subseteq \mathbb{R}_{> 1}$, as claimed.

        Let us now show that $\inf(h_\sigma(\mathcal{N})) = 1$, by constructing a sequence of number fields $\{ K_d \}_{d=1}^{+\infty}$ such that \[\zeta_{K_d}(\sigma) \approx \zeta(d \sigma)\] as $d \to +\infty$, which allows us to conclude because $\zeta(d \sigma) \to 1$ as $d \to +\infty$. 
        To construct the number fields $K_d$, let $p_1 = 2, p_2 = 3,\dots$ be the sequence of rational prime numbers. Then, for every pair of integers $d,n \geq 1$, there exists a monic polynomial $\psi_{d,n} \in \mathbb{Z}[x]$ which has degree $d$ and is irreducible modulo every prime $p_j$ for $j \in \{1,\dots,n\}$. To construct $\psi_{d,n}$, it suffices to choose some monic polynomials $\beta_1 \in \mathbb{F}_{p_1}[x],\dots,\beta_n \in \mathbb{F}_{p_n}[x]$ which have degree $d$ and are irreducible. Then, applying the Chinese remainder theorem coefficient-wise, we can construct a monic polynomial $\psi_{n,d} \in \mathbb{Z}[x]$ which has degree $d$ and is congruent to $\beta_j$ modulo each prime $p_j$. This polynomial $\psi_{d,n}$ is necessarily irreducible in $\mathbb{Z}[x]$ and defines a number field $F_{d,n} := \mathbb{Q}[x]/(\psi_{d,n})$ of degree $d$, such that $p_1,\dots,p_n$ are inert in the extension $\mathbb{Q} \subseteq F_{d,n}$. This implies that
        \[
            \zeta_{F_{d,n}}(\sigma) = \left( \prod_{j=1}^n \left( 1 - \frac{1}{p_j^{d \sigma}} \right)^{-1} \right) \cdot \left( \prod_{j= n+1}^{+\infty} \left( \prod_{\mathfrak{p} \mid p_j \mathcal{O}_{F_{d,n}}}  \left( 1 - \frac{1}{p_j^{f(\mathfrak{p}/p_j) \sigma}} \right)^{-1} \right) \right),
        \]
        where $f(\mathfrak{p}/p_j)$ denotes the inertia index of the prime ideal $\mathfrak{p} \subseteq \mathcal{O}_{F_{d,n}}$ over the prime ideal $p_j \mathbb{Z}$. This shows that $\zeta_{F_{d,n}}(\sigma) \to \zeta(d \sigma)$ when $n \to +\infty$ and $d$ is fixed. 
        More precisely, using the fact that $\left\lvert \log\left( 1 - \frac{1}{x} \right) \right\rvert < \frac{1}{x-1}$ for every $x > 1$, we have that
        \[
            \begin{aligned}
                \left\lvert \log\left\lvert \frac{\zeta_{F_{d,n}}(\sigma)}{\zeta(d \sigma)}\right\rvert \right\rvert &= \left\lvert \sum_{j=n+1}^{+\infty} \left( \log\left\lvert 1 - \frac{1}{p_j^{d \sigma}} \right\rvert - \sum_{\mathfrak{p} \mid p_j \mathcal{O}_{F_{d,n}}} \log\left\lvert 1 - \frac{1}{p_j^{f(\mathfrak{p}/p_j) \sigma}} \right\rvert \right) \right\rvert \leq (d+1) \sum_{j=n+1}^{+\infty} \frac{1}{p_j^\sigma - 1} \\
                &\leq \frac{d+1}{(\sigma - 1) n^{\sigma - 1} \log n },
            \end{aligned}
        \]
        for every $n, d \geq 1$. Setting $\alpha := \lfloor \frac{1}{\sigma-1} \rfloor + 1$ and $K_d := F_{d,d^\alpha}$, we see that
        \[
            \lim_{d \to +\infty} h_\sigma(K_d) = \lim_{d \to +\infty} \zeta_{K_d}(\sigma) = \lim_{d \to +\infty} \zeta(d \sigma) = 1,
        \]
        and therefore that $\inf(h_\sigma(\mathcal{N})) = 1$, as we wanted to show.

        In order to conclude the proof of this proposition, we need to show that $\sup(h_\sigma(\mathcal{N})) = +\infty$. To do so, fix two integers $n,k \geq 1$, and let $q_{1,n},\dots,q_{k,n} \in \mathbb{N}$ be the smallest $k$-many primes such that \[q_{j,n} \equiv 1 \ \text{mod} \ 4 p_1 \dots p_n\] for every $j \in \{1,\dots,k\}$, which exist thanks to Dirichlet's theorem on the existence of infinitely many primes in any given arithmetic progression \cite[Section~2.3]{Iwaniec_Kowalski_2004}. Then, the quadratic fields $\mathbb{Q}(\sqrt{q_{1,n}}),\dots,\mathbb{Q}(\sqrt{q_{k,n}})$ are all linearly disjoint, and the primes $p_1,\dots,p_n$ split in each of these quadratic fields. Therefore, we constructed a number field $L_{k,n} := \mathbb{Q}(\sqrt{q_{1,n}},\dots,\sqrt{q_{k,n}})$ of degree $[L_{k,n} \colon \mathbb{Q}] = 2^k$ such that $p_1,\dots,p_n$ split completely in $L_{k,n}$.
        This implies that
        \[
            \zeta_{L_{k,n}}(\sigma) = \left( \prod_{j=1}^n \left( 1 - \frac{1}{p_j^\sigma} \right)^{-2^k} \right) \cdot \left( \prod_{j= n+1}^{+\infty} \left( \prod_{\mathfrak{p} \mid p_j \mathcal{O}_{L_{k,n}}}  \left( 1 - \frac{1}{p_j^{f(\mathfrak{p}/p_j) \sigma}} \right)^{-1} \right) \right)  
        \]
        and therefore we see that $\zeta_{L_{k,n}}(\sigma) \to \zeta(\sigma)^{2^k}$ when $n \to +\infty$ and $k$ is fixed. More precisely, using once again the fact that $\left\lvert \log\left( 1 - \frac{1}{x} \right) \right\rvert < \frac{1}{x-1}$ for every $x > 1$, we see that
        \[
            \begin{aligned}
                \left\lvert \log\left\lvert \frac{\zeta_{L_{k,n}}(\sigma)}{\zeta(\sigma)^{2^k}} \right\rvert \right\rvert &= \left\lvert \sum_{j=n+1}^{+\infty} \left( 2^k \log\left\lvert 1-\frac{1}{p_j^\sigma} \right\rvert - \sum_{\mathfrak{p} \mid p_j \mathcal{O}_{L_{k,n}}} \log\left\lvert 1 - \frac{1}{p_j^{f(\mathfrak{p}/p_j) \sigma}} \right\rvert \right) \right\rvert \\
                &\leq (2^k+1) \sum_{j=n+1}^{+\infty} \frac{1}{p_j^\sigma - 1} \leq \frac{2^k+1}{(\sigma-1) n^{\sigma-1} \log n}.
            \end{aligned} 
        \]
        Setting $\alpha := \lfloor \frac{1}{\sigma-1} \rfloor + 1$ and $M_k := L_{k,2^{\alpha k}}$, we have that
        \[
            \lim_{k \to +\infty} h_\sigma(M_k) = \lim_{k \to \+infty} \zeta_{M_k}(\sigma) = \lim_{k \to +\infty} \zeta(\sigma)^{2^k} = +\infty,
        \]
        which finally allows us to conclude that $\sup(h_\sigma(\mathcal{N})) = +\infty$.
    \end{proof}
      
    \begin{remark}
        The proof of \cref{s>1} shows that if $K$ is any number field of degree $[K \colon \mathbb{Q}] = d$ inside which the primes $p_1,\dots,p_n$ split completely, then
        \[
            \left\lvert \log\left\lvert \frac{\zeta_K(\sigma)}{\zeta(\sigma)^d} \right\rvert \right\rvert \leq \frac{d+1}{(\sigma-1) n^{\sigma-1} \log n}.
        \]
        In fact, for any fixed $d$ and $n$, such a number field always exists, and can even be constructed as an abelian extension of $\mathbb{Q}$. Indeed, the theorem of Grünwald and Wang \cite[Theorem~9.2.8]{Neukirch_Schmidt_Wingberg_2008} allows one to construct such an extension if $d$ is odd. On the other hand, if $d = 2^k$ we can take $K = L_{k,n} = \mathbb{Q}(\sqrt{q_{1,n}},\dots,\sqrt{q_{k,n}})$ as above. In the general case, one can write $d = d_0 2^k$, fix an abelian extension $K'$ of degree $d_0$ inside which all the primes $p_1,\dots,p_n$ split completely, and then take some primes $\ell_1,\dots,\ell_k$ such that $\ell_j \equiv 1 \ \mod 4 p_1 \cdots p_n$ and $\mathbb{Q}(\sqrt{\ell_1},\dots,\sqrt{\ell_n})$ is linearly disjoint from $K'$. This allows one to take $K = K'(\sqrt{\ell_1},\dots,\sqrt{\ell_n})$.
    \end{remark}

\subsection{Left of the critical strip}\label{negative s}

    In this section, we study the diophantine properties of special values of Dedekind zeta functions for those $s \in \mathbb{C}$ such that $\mathrm{Re}(s) \leq 0$.

    \begin{proof}[Proof of \cref{Re negative}]
    By \cite[Lemma 3.1]{GeLa24}, for every $s$ with $\sigma=\mathrm{Re}(s)>1$ we have that:
    \begin{equation}\label{bound zeta}
    \frac{1}{\zeta(\sigma)^{d_K}}\leq |\zeta_K(s)|\leq \zeta(\sigma)^{d_K},
    \end{equation}
    where $d_K=[K:\Q]$. In addition, by \cite[Lemma 4.3]{GeLa24} we have that
    \begin{equation}\label{bound gamma}
    \Gamma_{m}(s)^{d_K}\leq\left|\frac{\Gamma_\R(1 - s)^{r_1} \Gamma_\C(1 - s)^
{r_2}}{\Gamma_\R(s)^{r_1} \Gamma_\C(s)^{r_2}
}\right|,    
    \end{equation}
    where $\Gamma_m$ is the function defined in \eqref{eq:Gamma_m}.
    By \cite[Theorem 1.2]{Pazuki_Pengo_2024} we know that if $s \in \mathbb{Z}_{\leq 0}$ is a non-positive integer, $h_s$ satisfies the Northcott property. In particular, the function $[K]\mapsto \left(\lvert\zeta_K^\ast(s) \rvert, d_K\right)$ has the Northcott property. Thus, we can assume that $s$ is not a negative integer. Then, we have that $\Gamma_{m}(s)>0$.
    
     Applying \eqref{bound zeta} and \eqref{bound gamma} to the functional equation \eqref{functional equation}, we obtain that for every $s\in \C$ with $\sigma<0$, we have
    \[
|\zeta_K(s)|\geq \left(\frac{\Gamma_{m}(s)}{|\zeta(1-\sigma)|}\right)^{d_K}|\Delta_K|^{\frac{1}{2}-\sigma}.
    \]
    Let $B$ be a positive number. We assume that $d_K\leq B$ and aim to prove that $[K]\mapsto \lvert\zeta_K^\ast(s) \rvert$ has the Northcott property. 
    Since the discriminant function $[K]\mapsto \lvert\Delta_K \rvert$ has the Northcott property, thanks to a celebrated theorem of Hermite's, for all but finitely many isomorphism classes $[K]$, we have that
    \[
    |\Delta_K|^{\frac{1}{4}}\geq \left(\frac{\Gamma_{m}(s)}{|\zeta(1-\sigma)|}\right)^{B}.
    \]
    Therefore, for all but finitely isomorphism classes of number fields $[K]$ with $d_K\leq B$, we have
    \[
|\zeta_K(s)|\geq |\Delta_K|^{\frac{1}{4}-\sigma}.
    \]
    Consequently, since $\sigma< 0$, the function $[K]\mapsto \left(\lvert\zeta_K^\ast(s) \rvert, d_K\right)$ has the Northcott property.
    \end{proof}

    \section*{Acknowledgments}

    We thank Matilde Lalín, Asbjørn Christian Nordentoft, Morten Risager, and Kannan Soundararajan for several useful discussions. Moreover, we thank David Rohrlich for his comments on the first version of this work. Finally, we thank the anonymous referee for their precious comments and suggestions, which helped us improve the expositional quality of the present paper.
    \section*{Funding}
    We thank the CNRS (IRN GANDA) for its support. The first author was supported by Simons Foundation grant no.\ 550023. The second author was supported by ANR {\it Jinvariant} (ANR-20-CE40-0003). He is grateful to the University of Bordeaux for the hospitality.
The third author is grateful to the École normale supérieure de Lyon, to the Max Planck Institute for Mathematics in Bonn and to the Leibniz University in Hannover for providing excellent working conditions, great hospitality, and financial support.
Moreover, the third author thanks the research project ``Motivic homotopy, quadratic invariants and diagonal classes'' (ANR-21-CE40-0015) and the LABEX MILYON (ANR-10-LABX-0070) of the Université de Lyon, within the program
    ``Investissements d’Avenir'' (ANR-11-IDEX-0007), for their financial support.

\begin{wrapfigure}[3]{R}{0.2\linewidth}
\vspace{-0.5cm}
\includegraphics[width=\linewidth]{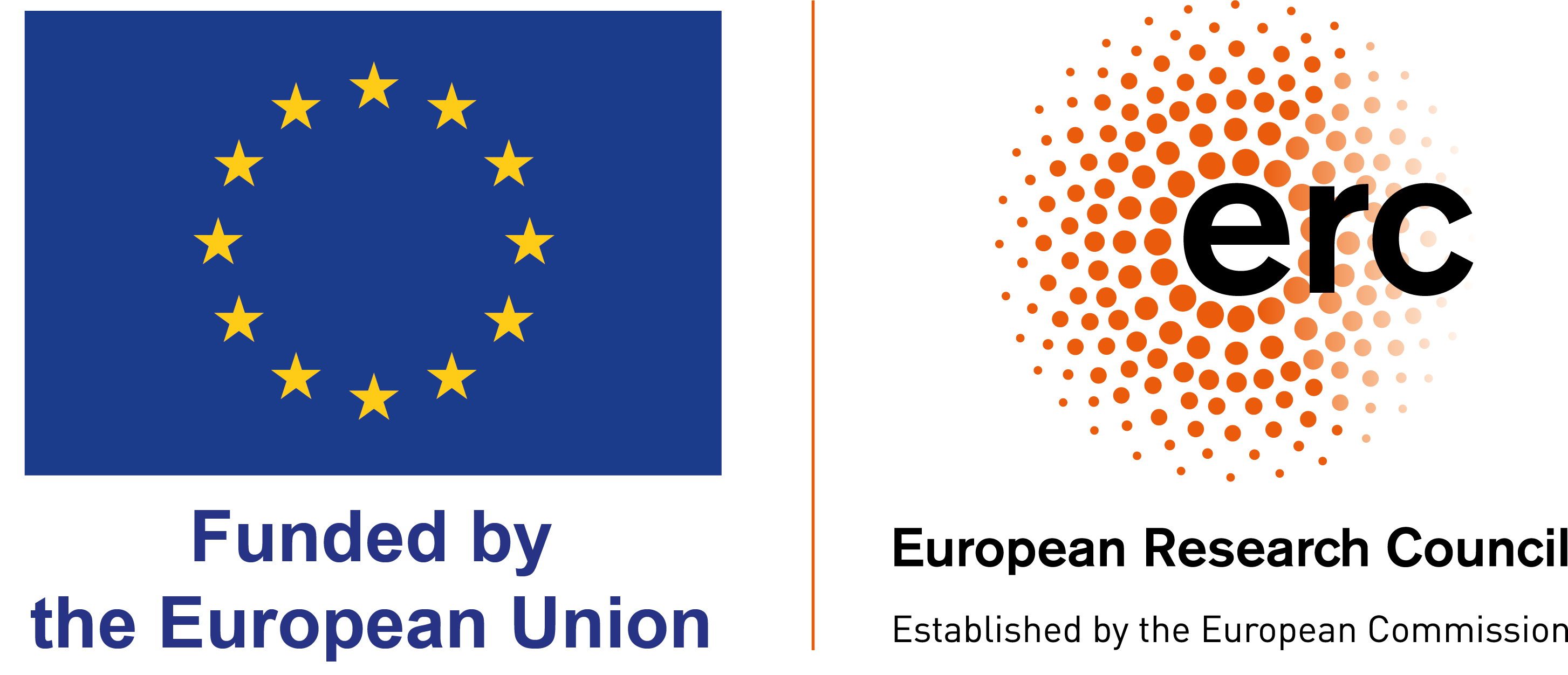} 
\end{wrapfigure}
\vspace{0.2cm}
Furthermore, the third author received funding from the European Research Council (ERC) under the European Union’s Horizon 2020 research and innovation programme (grant agreement number 945714).

\vspace{\baselineskip}
    \section*{Data availability statement}

    Data sharing not applicable to this article as no datasets were generated or analysed during
the current study.
    \section*{Conflict of interest}
    
    On behalf of all authors, the corresponding author states that there is no conflict of
interest.

\printbibliography

@article {Brauer_1947,
    AUTHOR = {Brauer, Richard},
     TITLE = {On the zeta-functions of algebraic number fields},
   JOURNAL = {Amer. J. Math.},
  FJOURNAL = {American Journal of Mathematics},
    VOLUME = {69},
      YEAR = {1947},
     PAGES = {243--250},
      ISSN = {0002-9327,1080-6377},
   MRCLASS = {10.0X},
  MRNUMBER = {20597},
MRREVIEWER = {M.\ Krasner},
       DOI = {10.2307/2371849},
       URL = {https://doi.org/10.2307/2371849},
}

@article {Briggs_Chowla,
    AUTHOR = {Briggs, W. E. and Chowla, S.},
     TITLE = {The power series coefficients of {$\zeta(s)$}},
   JOURNAL = {Amer. Math. Monthly},
  FJOURNAL = {American Mathematical Monthly},
    VOLUME = {62},
      YEAR = {1955},
     PAGES = {323--325},
      ISSN = {0002-9890,1930-0972},
   MRCLASS = {10.0X},
  MRNUMBER = {69209},
MRREVIEWER = {H.\ S.\ Zuckerman},
       DOI = {10.2307/2307036},
       URL = {https://doi.org/10.2307/2307036},
}

@article {Granville_Soundararajan_2003,
    AUTHOR = {Granville, A. and Soundararajan, K.},
     TITLE = {The distribution of values of {$L(1,\chi_d)$}},
   JOURNAL = {Geom. Funct. Anal.},
  FJOURNAL = {Geometric and Functional Analysis},
    VOLUME = {13},
      YEAR = {2003},
    NUMBER = {5},
     PAGES = {992--1028},
      ISSN = {1016-443X,1420-8970},
   MRCLASS = {11M20},
  MRNUMBER = {2024414},
MRREVIEWER = {Philippe\ G.\ Michel},
       DOI = {10.1007/s00039-003-0438-3},
       URL = {https://doi.org/10.1007/s00039-003-0438-3},
}

@article{Pazuki_Pengo_2024, title={On the Northcott property for special values of L-functions}, DOI={10.4171/RMI/1454}, journal={Rev. Mat. Iberoam.}, author={Pazuki, Fabien and Pengo, Riccardo}, year={2024}, number={40.1}, pages={1–42} }

@book{Iwaniec_Kowalski_2004, series={American Mathematical Society Colloquium Publications}, title={Analytic number theory}, volume={53}, ISBN={978-0-8218-3633-0}, url={https://mathscinet.ams.org/mathscinet-getitem?mr=2061214}, DOI={10.1090/coll/053}, publisher={American Mathematical Society, Providence, RI}, author={Iwaniec, Henryk and Kowalski, Emmanuel}, year={2004}, collection={American Mathematical Society Colloquium Publications} }

@book{Neukirch_Schmidt_Wingberg_2008, edition={Second edition}, series={Grundlehren der Mathematischen Wissenschaften}, title={Cohomology of number fields}, volume={323}, ISBN={978-3-540-37888-4}, url={https://mathscinet.ams.org/mathscinet-getitem?mr=2392026}, publisher={Springer-Verlag, Berlin}, author={Neukirch, Jürgen and Schmidt, Alexander and Wingberg, Kay}, year={2008}, collection={Grundlehren der Mathematischen Wissenschaften} }

@article {Lamzouri_2011,
    AUTHOR = {Lamzouri, Youness},
     TITLE = {On the distribution of extreme values of zeta and
              {$L$}-functions in the strip {$\frac12<\sigma<1$}},
   JOURNAL = {Int. Math. Res. Not. IMRN},
  FJOURNAL = {International Mathematics Research Notices. IMRN},
      YEAR = {2011},
    NUMBER = {23},
     PAGES = {5449--5503},
      ISSN = {1073-7928,1687-0247},
   MRCLASS = {11M06 (11F11 11F30 11M26 11M41)},
  MRNUMBER = {2855075},
MRREVIEWER = {Nigel\ Watt},
       DOI = {10.1093/imrn/rnq293},
       URL = {https://doi.org/10.1093/imrn/rnq293},
}

@article {Soundararajan_2008,
    AUTHOR = {Soundararajan, K.},
     TITLE = {Extreme values of zeta and {$L$}-functions},
   JOURNAL = {Math. Ann.},
  FJOURNAL = {Mathematische Annalen},
    VOLUME = {342},
      YEAR = {2008},
    NUMBER = {2},
     PAGES = {467--486},
      ISSN = {0025-5831,1432-1807},
   MRCLASS = {11M06 (11N56)},
  MRNUMBER = {2425151},
MRREVIEWER = {D.\ R.\ Heath-Brown},
       DOI = {10.1007/s00208-008-0243-2},
       URL = {https://doi.org/10.1007/s00208-008-0243-2},
}

@article {Soundararajan_2000,
    AUTHOR = {Soundararajan, K.},
     TITLE = {Nonvanishing of quadratic {D}irichlet {$L$}-functions at
              {$s=\frac12$}},
   JOURNAL = {Ann. of Math. (2)},
  FJOURNAL = {Annals of Mathematics. Second Series},
    VOLUME = {152},
      YEAR = {2000},
    NUMBER = {2},
     PAGES = {447--488},
      ISSN = {0003-486X,1939-8980},
   MRCLASS = {11M20 (11R42)},
  MRNUMBER = {1804529},
MRREVIEWER = {J.\ B.\ Conrey},
       DOI = {10.2307/2661390},
       URL = {https://doi.org/10.2307/2661390},
}

@article {Lou23,
    AUTHOR = {Louboutin, St\'{e}phane R.},
     TITLE = {On {C}howla's hypothesis implying that {$L(s,\chi )>0$} for
              {$s>0$} for real characters {$\chi$}},
   JOURNAL = {Bull. Korean Math. Soc.},
  FJOURNAL = {Bulletin of the Korean Mathematical Society},
    VOLUME = {60},
      YEAR = {2023},
    NUMBER = {1},
     PAGES = {1--22},
      ISSN = {1015-8634,2234-3016},
   MRCLASS = {11M06 (11R18 11R29)},
  MRNUMBER = {4542597},
MRREVIEWER = {David\ John\ Platt},
       DOI = {10.4134/BKMS.b210464},
       URL = {https://doi.org/10.4134/BKMS.b210464},
}

@article{Cho35,
author = {Chowla, S.},
journal = {Acta Arith.},
keywords = {Dirichlet series, almost periodic functions},
number = {1},
pages = {113-114},
title = {Note on Dirichlet's L-functions},
url = {http://eudml.org/doc/205044},
volume = {1},
year = {1935},
}

@book {Cho65,
    AUTHOR = {Chowla, S.},
     TITLE = {The {R}iemann hypothesis and {H}ilbert's tenth problem},
    SERIES = {Mathematics and its Applications},
    VOLUME = {4},
 PUBLISHER = {Gordon and Breach Science Publishers, New York-London-Paris},
      YEAR = {1965},
     PAGES = {xv+119},
   MRCLASS = {10.41 (10.10)},
  MRNUMBER = {177943},
MRREVIEWER = {S.\ Knapowski},
url={https://archive.org/details/riemannhypothesi0000chow/}
}

@article {GeLaLi22,
    AUTHOR = {G\'{e}n\'{e}reux, Xavier and Lalín, Matilde and Li,
              Wanlin},
     TITLE = {On the {N}orthcott property of zeta functions over function
              fields},
   JOURNAL = {Finite Fields Appl.},
  FJOURNAL = {Finite Fields and their Applications},
    VOLUME = {83},
      YEAR = {2022},
     PAGES = {Paper No. 102080, 27},
      ISSN = {1071-5797,1090-2465},
   MRCLASS = {11G40 (11M06 14G10)},
  MRNUMBER = {4444425},
MRREVIEWER = {John\ T.\ Cullinan},
       DOI = {10.1016/j.ffa.2022.102080},
       URL = {https://doi.org/10.1016/j.ffa.2022.102080},
}

@article {sono2020second,
    AUTHOR = {Sono, Keiju},
     TITLE = {The second moment of quadratic {D}irichlet {$L$}-functions},
   JOURNAL = {J. Number Theory},
  FJOURNAL = {Journal of Number Theory},
    VOLUME = {206},
      YEAR = {2020},
     PAGES = {194--230},
      ISSN = {0022-314X,1096-1658},
   MRCLASS = {11M06},
  MRNUMBER = {4013170},
MRREVIEWER = {Arnaud\ Chadozeau},
       DOI = {10.1016/j.jnt.2019.06.011},
       URL = {https://doi.org/10.1016/j.jnt.2019.06.011},
}

@article {GeLa24,
    AUTHOR = {G\'{e}n\'{e}reux, Xavier and Lal\'in, Matilde},
     TITLE = {On the Northcott property of Dedekind zeta functions},
   JOURNAL = {Ramanujan J.},
  FJOURNAL = {},
    VOLUME = {63},
      YEAR = {2024},
     PAGES = {no. 4, 1135--1178},
      ISSN = {},
   MRCLASS = {11G40 (11M06 14G10)},
  MRNUMBER = {},
MRREVIEWER = {},
       DOI = {},
       URL = {https://dms.umontreal.ca/~mlalin/Calcul.pdf},
}

\end{document}